\documentclass[a4paper,12pt]{article}
\usepackage{amssymb}
\usepackage{color}
\usepackage{amsmath,amssymb, amscd}

\DeclareMathOperator{\tr}{tr}
\DeclareMathOperator{\Tr}{Tr}
\DeclareMathOperator{\ad}{ad}
\DeclareMathOperator{\Ad}{Ad}
\DeclareMathOperator{\Ker}{Ker}

\DeclareMathOperator{\can}{can}

\DeclareMathOperator{\res}{res}

\newcommand{\eqa}{\begin{eqnarray}}
\newcommand{\eeqa}{\end{eqnarray}}
\newcommand{\beq}{\begin{equation}}
\newcommand{\eeq}{\end{equation}}
\newcommand{\hg}{\hat{\mathfrak{g}}}
\newcommand{\g}{\mathfrak{g}}

\newcommand{\n}{\mathfrak{n}}

\newcommand{\h}{\mathfrak{h}}
\newcommand{\fb}{\mathfrak{b}}
\newcommand{\hb}{\hat{\mathfrak{b}}}

\newcommand{\kk}{{\rm{k}}}
\newcommand{\la}{\langle}
\newcommand{\ra}{\rangle}
\newcommand{\e}{\epsilon}
\newcommand{\lm}{\lambda}
\newcommand{\al}{\alpha}
\newcommand{\dl}{\delta}

\newcommand{\LM}{\Lambda}
\newcommand{\tLM}{\tilde{\Lambda}}
\newcommand{\p}{\partial}
\newcommand{\nn}{\nonumber}

\numberwithin{equation}{section}

\newtheorem{theorem}{Theorem}[section]
\newtheorem{prop}[theorem]{Proposition}
\newtheorem{lem}[theorem]{Lemma}

\newtheorem{defn}[theorem]{Definition}
\newtheorem{rem}[theorem]{Remark}
\newenvironment{proof}{\noindent {\it Proof }}{\hfill $\square$}

\begin{document}

\title{Frobenius Manifolds and Central Invariants for the Drinfeld - Sokolov Bihamiltonian Structures}
\author{Boris Dubrovin${}^*$, Si-Qi Liu${}^{**}$, Youjin Zhang${}^{**}$\\
{\small * SISSA, Via Beirut 2-4, 34014 Trieste, Italy}\\
{\small and Steklov Math. Institute, Moscow}\\
{\small ** Department of Mathematical Sciences,
Tsinghua University}\\
{\small Beijing 100084, P. R. China}}
\date{}\maketitle
\begin{abstract}
The Drinfeld - Sokolov construction associates a hierarchy of
bihamiltonian integrable systems with every untwisted affine Lie
algebra. We compute the complete set of invariants of the related
bihamiltonian structures with respect to the  group of
Miura type transformations.
\end{abstract}

\section{Introduction}
The problem of classification of integrable systems of evolutionary PDEs
\begin{eqnarray}
&&
u^i_t = K^i(u; u_x, u_{xx}, \dots), \quad i=1, \dots, n
\nn\\
&&
u=(u^1, \dots, u^n)\in M^n
\nn
\end{eqnarray}
was studied by many mathematicians in the last 40 years with the
help of various techniques; in such a general setup it remains
essentially open, although there are already strong results for many
particular subclasses of equations (see, for example,
\cite{MSY, mikh-shabat, kruskal, zakharov, DZ2} and references
therein).

Before starting the classification work one has to adopt a
definition of complete integrability. For Hamiltonian PDEs
$$
K^i(u; u_x, u_{xx}, \dots) =\{ u^i(x), H\}, \quad i=1, \dots, n
$$
with a suitable class of the Poisson brackets $\{~\,, ~\}$ and the Hamiltonians $H$,
 one can define integrability, similarly to the finite dimensional case, by assuming existence of a complete family of commuting Hamiltonians (we do not explain here the notion of completeness, see e.g. in \cite{DZ2}). More specific is the class of {\it bihamiltonian} evolutionary PDEs admitting two different
 Hamiltonian descriptions
$$
K^i(u; u_x, u_{xx}, \dots) =\{ u^i(x), H_1\}_1=\{ u^i(x), H_2\}_2
$$
with respect to a {\it compatible pair} of Poisson brackets (see below). Under certain genericity assumptions existence of a bihamiltonian representation ensures complete integrability
(see details in \cite{DZ2, DLZ}). Thus, the problem of classification of integrable PDEs reduces to the problem of classification of bihamiltonian structures of a suitable class.
 Even in this bihamiltonian framework the classification problem is still far from being resolved.

In \cite{DZ2, LZ, DLZ} we proposed a kind of a perturbative approach to the classification problem considering the subclass of bihamiltonian PDEs admitting a (formal) expansion with respect to a small parameter $\epsilon$
\begin{eqnarray}\label{eps-s}
&&
u^i_t =A^i_j(u) u^j_x +\epsilon \left[ B^i_j(u) u^j_{xx} + C^i_{jk} (u)u^j_x u^k_x\right]
\nn\\
&&\qquad
+\epsilon^2 \left[ D^i_j(u) u^j_{xxx} +E^i_{j\,k}(u) u^j_x u^k_{xx}
+F^i_{jkl}(u) u^j_x u^k_x u^l_x\right]
+\dots,\\
&&\quad i=1,\dots, n
\nn
\end{eqnarray}
(summation over repeated indices will be assumed). Such systems are to be classified with respect to a certain pronilpotent extension of the group of (local) diffeomorphisms
of the manifold $M^n$ that we called the {\it group of Miura type transformations} (see Section \ref{sec-miura} below). On this way we managed to produce a complete set of invariants of  the bihamiltonian structures satisfying certain semisiplicity assumptions. The first part of these invariants is a differential-geometric object defined on the manifold $M^n$ called {\it flat pencil of metrics}; it describes the bihamiltonian structure of the {\it hydrodynamic limit}
\begin{equation}\label{hyper}
u^i_t =A^i_j(u) u^j_x
\end{equation}
of the system \eqref{eps-s}. The second part comes from the deformation theory of these bihamiltonian
structures of hydrodynamic type; it consists of $n$ functions of one variable called the {\it central invariants}
of the bihamiltonian structure.
The main result of the papers \cite{LZ, DLZ} says that the flat pencil of metrics along with the
collection of central invariants
completely characterizes the equivalence class of a semisimple bihamiltonian structure
with respect  to the group of local Miura type transformations
 (for the precise formulation see the Theorem \ref{thm51} below). In particular, the systems of
 bihamiltonian PDEs with all vanishing central invariants are equivalent to the hydrodynamic limit \eqref{hyper}.

Apart from this trivial case no general results about {\it existence} of bihamiltonian structures and integrable hierachies with a given pair
$$
\mbox{(flat pencil of metrics, collection of central invariants)}
$$
is available. The most studied is the class of the so-called {\it integrable hierarchies of the topological type} motivated by the needs of the theory of Gromov - Witten invariants. For this class the Poisson pencil comes from a semisimple Frobenius structure on the manifold $M^n$; all the central invariants are constants equal to each other. Some partial existence results for integrable hierarchies of the topological type will appear elsewhere \cite{htt}.
So, for the moment we have decided to review the list of known examples of bihamiltonian PDEs of the form \eqref{eps-s} in the framework of our theory of flat pencils and central invariants.

First examples of such analysis have been carried out in \cite{LZ, DLZ}. In the present paper we will consider the flat pencils of metrics and the central invariants for the bihamiltonian hierarchies
constructed by V.Drinfeld and V.Sokolov in \cite{DS}.

The Drinfeld - Sokolov's celebrated paper \cite{DS} gives a very simple construction, in terms of the
Poisson reduction procedure, of a hierarchy of integrable PDEs associated with a Kac - Moody Lie algebra and
a choice of a vertex on the extended Dynkin diagram. In this paper we will only consider the most
well known version of this construction for which the affine Lie algebra is untwisted and the chosen vertex
of the Dynkin diagram is $c_0$ (the one added to the Dynkin diagram of the associated simple Lie algebra).
In this case the hierarchy admits a bihamiltonian structure. The importance of this part of
the Drinfeld - Sokolov construction became clear after the discovery, due to V.\,Fateev and S.\,Lukyanov \cite{fl},
of the connection of the {\it second} Poisson structure for the Drinfeld - Sokolov hierarchy with
the semiclassical limit $W_{\rm cl}(\g)$ of the Zamolodchikov's $W$-algebra \cite{zam}.
Moreover, according to the conjecture of Drinfeld, proved by B.\,Feigin and
E.\,Frenkel (see in \cite{ff, frenkel}) the classical $W$-algebra $W_{\rm cl}(\g)$ arises
naturally on the center of the universal enveloping algebra of the affine algebra $\hat \g'$ of the Langlands
dual Lie algebra $\g'$ at the critical level.

In all these theories the {\it first} Poisson structure of the Drinfeld and Sokolov seems to be something superfluous: in the standard definition the classical $W$-algebra is defined just as  the second Poisson structure of Drinfeld and Sokolov.
However, in the framework of our differential-geometric classification approach a single Poisson bracket has essentially no invariants: after extension to Miura-type transformations with complex coefficients any two local Poisson brackets of our class are equivalent \cite{getzler}; see also \cite{magri, DZ2}.

The main result of this paper is the complete description of the flat pencils of metrics and computation of the central invariants
for the Drinfeld - Sokolov bihamiltonian structures for all untwisted affine Lie algebras.
%\footnote{Our list  $A\, B\, C\, D\, G_2$ coincides with the list of examples of integrable hierarchies considered explicitely in \cite{DS}. A realization of integrable hierarchies associated with the exceptional Lie algebras of other types has been obtained by V.Kac and M.Wakimoto \cite{kw}. They did not consider however the bihamiltonian structure of the exceptional hierarchies.} .
We prove that the flat pencils of metrics are obtained from the Frobenius structures on the orbit spaces of the corresponding Weyl groups
%\footnote{This result has also been independently proved in general by Y.Dinar; the proof will appear elsewhere.},
 constructed by one of the authors in \cite{coxeter} via the theory of flat structures of K.Saito {\sl et al.} \cite{S2, S1}. The central invariants are proved to be all constants; they are identified with $\frac1{48} \times\,$the square lengths, with respect to the normalized invariant bilinear form, of the generators in the Cartan subalgebra. In particular, this proves that the Drinfeld - Sokolov integrable hierarchies for the $A$, $D$ and $E$ series are equivalent, in the sense of Definition \ref{ekvi}, to an integrable hierarchy of the topological type.

The plan of the paper is as follows: we first recall in the next
section the definitions of the bihamiltonian structures, the
associated flat pencils of metrics and central invariants. In
Section \ref{sec21} and Section \ref{sec-3}, we also remind some
preliminaries from Poisson geometry and the Drinfeld - Sokolov
reduction procedure. In Section \ref{principale} we formulate the
Main Theorem about invariants of the Drinfeld - Sokolov
bihamiltonian structures. The proof of this theorem is given in
Section \ref{a-n} for the $A_n$ hierarchies, in Section \ref{bcd-n}
for the $B_n$, $C_n$, $D_n$ hierarchies, and in Section \ref{except}
for the hierarchies associated with the exceptional simple Lie
algebras (some relevant formulae are given in the Appendices). In the final section we give some concluding remarks; we
also give an example of the Drinfeld - Sokolov equation associated
with the {\it twisted} Kac - Moody Lie algebra of the $A_2^{(2)}$
type not admitting a bihamiltonian structure.

\section{Central invariants of semisimple bihamiltonian structures }\label{sec-miura}

We study bihamiltonian structures of the following form
\beq
\begin{split}
&\{u^i(x),u^j(y)\}_a=\{u^i(x),u^j(y)\}_a^{[0]}+\sum_{k\ge 1}\e^k \{u^i(x),u^j(y)\}_a^{[k]}, \\
&\quad \{u^i(x),u^j(y)\}_a^{[k]} =
\sum_{l=0}^{k+1}A^{ij}_{k,l;a}(u;u_x,\cdots, u^{(l)}) \delta^{(k-l+1)}(x-y)
\end{split}
\label{bhs0}
\eeq
where $i,j=1,\cdots,n,\ a=1,2$. Here $u=(u^1, \dots, u^n)\in M$ for
some $n$-dimensional manifold $M$.
The dependent variables $u^1$, \dots, $u^n$ will be considered as local coordinates on $M$.  In this paper the manifold $M$ will be assumed to be diffeomorphic to a ball.

The coefficients $A^{ij}_{k,l;a}$ in \eqref{bhs0} are homogeneous degree $l$ elements of the graded ring ${\cal B}$
of polynomial functions on the jet bundle of $M$
\begin{align*}
{\cal B}= \varinjlim_k {\cal B}_k, &\quad {\cal B}_k={\mathcal C}^\infty (M) [u_x, u_{xx}, \dots, u^{(k)}],
\\
& \deg \p_x^k u^i=k.
\end{align*}
Antisymmetry and Jacobi identity for both brackets as well as the compatibility condition (see below) are understood as identities for formal power series in $\e$.

The leading terms of the Poisson brackets form a bihamiltonian
structure of hydrodynamic type. The coefficients of this term will be redenoted as follows
\eqa
&& \{u^i(x),u^j(y)\}_a^{[0]}=g^{ij}_a(u(x))\delta'(x-y)+{\Gamma^{ij}_k}_a(u(x)) u^k_x\delta(x-y),\label{ldt} \\
&&\quad\ a=1,2.
\nn
\eeqa
Here for any $\lm\in\mathbb{R}$ the symmetric matrix $(g^{ij}_2(u) -\lambda g^{ij}_1(u))$ is assumed to be nondegenerate
for generic $u\in M$.
The Poisson bracket $\{~,~\}_a^{[0]}$ is called {\it the dispersionless limit} of the bracket $\{~\,, ~\}_a$ for every $a=1,\, 2$.

For every $a=1, 2$ the map
$$
{\cal B}\times {\cal B} \to {\cal B}[[\epsilon]]
$$
given by the formula
\begin{eqnarray}\label{skob}
&&
(P,Q) \mapsto \frac{\delta P}{\delta u^i(x)} \Pi^{ij}_a \frac{\delta Q}{\delta u^j(x)} ,
\\
&&
\nn\\
&&
\begin{split}
&P=P(u; u_x, \dots, u^{(p)}), \quad Q=Q(u; u_x, \dots, u^{(q)})\in {\cal B}
\\
&
\Pi^{ij}_a= g^{ij}_a(u)\partial_x   +{\Gamma^{ij}_k}_a (u)  u^k_x
+\sum_{k\ge 1}\e^k \sum_{l=0}^{k+1}A^{ij}_{k,l;a}(u;u_x,\cdots, u^{(l)}) \partial_x^{k-l+1}
\end{split}\nn
\end{eqnarray}
defines a Lie algebra structure on the quotient ring
\begin{equation}\label{quot}
\bar{\cal B}:={\cal B}[[\epsilon]]/{\rm Im}\, \partial_x
\end{equation}
where
$$
\partial_x =\sum_k u^{i, k+1}\frac{\partial}{\partial u^{i,k}}, \quad u^{i,k}:= \frac{\partial^k u^i}{\partial x^k} .
$$
In the formula (\ref{skob}) summation over repeated indice $i$, $j$ is assumed,
$$
\frac{\delta}{\delta u^i(x)} =\frac{\partial}{\partial u^i}-\partial_x \frac{\partial}{\partial u^i_x}+\partial_x^2 \frac{\partial}{\partial u^i_{xx}}
-\partial_x ^3\frac{\partial}{\partial u^i_{xxx}}+\dots
$$
is the Euler - Lagrange operator. The class of equivalence in the quotient space \eqref{quot} of
any element $P(u; u_x, \dots; \epsilon)\in {\cal B}$  will be denoted by
$$
\int P(u; u_x, \dots; \epsilon)\, dx\in\bar {\cal B}
$$
and called a {\it local functional}. Observe that, if $P$ and $Q$ are two homogeneous differential polynomials of the degrees $p$ and $q$ respectively then their bracket (\ref{skob}) will be a homogeneous element of the ring ${\cal B}[[\epsilon]]$ of formal power series in $\epsilon$ of the degree $p+q+1$  if the degree
$$
\deg \epsilon=-1
$$
is assigned to the indeterminate $\epsilon$. So, for an arbitrary local functional of the degree zero
$$
H=\int \sum_{k\geq 0} \epsilon^k  P_k(u; u_x, u_{xx}, \dots, u^{(k)}) dx, \quad \deg P_k (u; u_x, u_{xx}, \dots, u^{(k)})=k
$$
the Hamiltonian vector field
$$
u^i_t = \{ u^i(x), H\}=\Pi^{ij} \frac{\delta P}{\delta u^j(x)}
$$
is a system of evolutionary PDEs of the form \eqref{eps-s} for any of the two Poisson structures $\Pi^{ij} =\Pi^{ij}_1$ or $\Pi^{ij}=\Pi^{ij}_2$.

By the definition of a bihamiltonian structure, any linear combination with constant coefficients of the two Poisson brackets must be again a Poisson bracket on $\bar {\cal B}$ (the so-called {\it compatibility} condition). Due to this property an infinite hierarchy of pairwise commuting systems of PDEs of the form \eqref{eps-s}
%\begin{equation}\label{pde}
%u^i_t =A^i_j(u) u^j_x +\epsilon \left[ B^i_j(u) u^j_{xx} + C^i_{jk} (u)u^j_x u^k_x\right] +\dots, \quad i=1, \dots, n
%\end{equation}
can be associated with the bihamiltonian structure (see details in \cite{DZ2}).

In the  {dispersionless limit} $\epsilon\to 0$ the equations \eqref{eps-s} become
a system of the first order quasilinear PDEs \eqref{hyper}.
%\begin{equation}\label{pde0}
%u^i_t =A^i_j(u) u^j_x.
%\end{equation}
The leading term \eqref{ldt} gives a bihamiltonian structure of \eqref{hyper}.
%For this reason we will also call \eqref{ldt} {\it the dispersionless limit} of the bihamiltonian structure \eqref{bhs0}.

The bihamiltonian structures \eqref{bhs0} will be considered up to invertible linear transformations with constant coefficients
\begin{eqnarray}\label{cambio}
&&
\{~\,, ~\}_1 \mapsto \kappa_{11}\{~\,, ~\}_1 + \kappa_{12} \{~\,, ~\}_2
\nn\\
&&
\{~\,, ~\}_2 \mapsto \kappa_{21}\{~\,, ~\}_1 + \kappa_{22} \{~\,, ~\}_2
\\
&&
\kappa_{11} \kappa_{22}-\kappa_{12}\kappa_{21}\neq 0.
\nn
\end{eqnarray}
The dependence of the associated integrable hierarchy on the changes \eqref{cambio} is nontrivial; it simplifies if one allows only triangular transformations
\begin{eqnarray}\label{cambio1}
&&
\{~\,, ~\}_1 \mapsto \kappa_{11}\{~\,, ~\}_1 \nn\\
&&
\{~\,, ~\}_2 \mapsto \kappa_{21}\{~\,, ~\}_1 + \kappa_{22} \{~\,, ~\}_2
\\
&&
\kappa_{11} \kappa_{22}\neq 0.
\nn
\end{eqnarray}
\begin{defn} A compatible pair of Poisson brackets \eqref{bhs0} considered modulo triangular transformations \eqref{cambio1}
is called a {\rm Poisson pencil}.
\end{defn}

The antisymmetry of the Poisson brackets \eqref{bhs0} gives a system of linear differential constraints for the coefficients. They can be written in a compact form
$$
\Pi^{ji}_a =- \left(\Pi^{ij}_a\right)^\dagger, \quad a=1, \, 2.
$$
Here the (formally) adjoint  to a scalar differential operator
$$
L=\sum_k A_k(x) \partial_x^k
$$
is defined by
\begin{equation}\label{adj}
L^\dagger =\sum_k (-\partial_x)^k A_k(x).
\end{equation}
The validity of the Jacobi identity for the pencil of Poisson brackets imposes a system of highly nontrivial nonlinear differential equations
for the coefficients. We address the classification problem of bihamiltonian structures \eqref{bhs0} under an additional assumption of {\it semisimplicity}.

\begin{defn}
A Poisson pencil \eqref{bhs0} is called {\em semisimple} if the roots $\lm^1(u)$, \dots, $\lm^n(u)$
of the characteristic equation $\det(g^{ij}_2(u)-\lm \, g^{ij}_1(u))=0$ form a system of local coordinates near a generic point $u\in M$. They are called the
{\rm canonical coordinates} of the pencil.
\end{defn}
In the canonical coordinates of a semisimple bihamiltonian structure, the leading terms \eqref{ldt}
diagonalize \cite{FEV, DLZ}:
\begin{equation}
g^{ij}_1(\lm)=f^i(\lm)\delta_{ij},\quad g^{ij}_2(\lm)=\lm^i f^i(\lm)\delta_{ij},\quad i,j=1,\cdots,n
\end{equation}
for some functions $f^1(\lambda)$,\dots,  $f^n(\lambda)$, $\lambda=(\lambda^1, \dots, \lambda^n)\in M$.
\begin{defn}
A {\rm Miura-type transformation} is a change of variables
of
the form
\begin{equation}\label{miura}
u^i\mapsto {\tilde u}^i(u; u_x, u_{xx}, \dots; \epsilon)=F^i_0(u)+\sum_{k\ge 1} \e^k F_k^i(u;u_x,\cdots,u^{(k)})
\end{equation}
where $F^i_k\in {\cal B}$ with $\deg F^i_k=k$, and the map $u \mapsto F^i_0(u)$ is a diffeomorphism of $M$.
\end{defn}

All Miura-type transformations form a group ${\cal G}(M)$. It acts by automorphisms on the graded ring ${\cal B}[[\epsilon]]$. This action commutes with the action of the operator of total $x$-derivative $\partial_x$. Therefore the action of the group ${\cal G}(M)$ on the Poisson brackets of the form \eqref{bhs0} is defined. The explicit formula
\beq
\tilde\Pi^{kl}_a = {L^k_i}\, \Pi^{ij}_a {L^l_j}^\dagger, \quad a=1, \, 2
\nn
\eeq
involves the  operator  of linearization of \eqref{miura}
$$
L^i_j=\sum_m \frac{\partial \tilde u^i(u; u_x, u_{xx}, \dots;\epsilon)}{\partial u^{j,m}} \partial_x^m
$$
and the adjoint operator ${L^i_j}^\dagger$ (see \eqref{adj}).

\begin{defn}\label{ekvi} Two bihamiltonian structures of the form \eqref{bhs0} are called {\rm equivalent} if one can be transformed to another by a combination of a Miura-type transformation and a linear change \eqref{cambio}.
\end{defn}

At the leading order $\epsilon=0$ one obtains the tensor law, with respect to the coordinate change $u^i\mapsto F_0^i(u)$,  for the $(2,0)$ symmetric tensors $g^{ij}_a(u)$ and the associated (contravariant) Levi-Civita connections ${\Gamma^{ij}_k}_a(u)$.

Recall \cite{getzler, magri} that the signature of the metric $g^{ij}(u)$ is the only local (i.e., $M = B^n$ = a small ball in $\mathbb{R}^n$) invariant of a {\it single} Poisson bracket with respect to the group ${\cal G}(B^n)$. The theory of invariants of bihamiltonian structure is more rich.

In order to avoid inessential complications with signs, let us consider the complex situation assuming the manifold $M$ to be complex analytic and all coefficients of the Poisson brackets and of the Miura-type transformations to be complex analytic functions in $u$. Then the complete set of local invariants of semisimple bihamiltonian structures of the form
\eqref{bhs0} consists of
\medskip

$\bullet$ flat pencil of metrics on $M$;
\medskip

$\bullet$ collection of $n$ functions of one variable called {\it central invariants}.
\medskip

Flat pencil of metrics on $M$ is, roughly speaking, a pair of (contravariant) metrics $g^{ij}_1(u)$, $g^{ij}_2(u)$ such that,
at any point $u\in M$ their arbitrary linear combination
$$
a_1 g^{ij}_1(u)+a_2 g^{ij}_2(u)
$$
has zero curvature, and the contravariant Christoffel coefficients for the above metric also  have the form of the same linear combination
$$
a_1{\Gamma^{ij}_k}_1 + a_2{\Gamma^{ij}_k}_2
$$
(see details in \cite{tani}).

In particular, a flat pencil of metrics arises on an arbitrary Frobenius manifold according to the following
construction \cite{coxeter, D1}. Recall that an arbitrary Frobenius manifold is equipped with
a flat metric
$\la ~\,, ~\ra$, a product of tangent vectors $(a,b) \mapsto a\cdot b$,
and an Euler vector field $E$. We put
\beq\label{fmt}
(~\,, ~)_1:=\la ~\,, ~\ra
\eeq
and define the second metric\footnote{It also appeared in \cite{arnold} under the guise of the
operation of convolution of invariants of reflection groups.} on the cotangent bundle from the equation
\begin{equation}\label{fro}
(\omega_1, \omega_2)_2 =i_{E}\, \omega_1\cdot \omega_2
\end{equation}
that must be valid for an arbitrary pair of 1-forms on the Frobenius manifold.
In this formula the identification of tangent and cotangent spaces at every point is done by means of the
{\it first} metric $(~\,, ~)_1$. By means of this identification one defines
the product of 1-forms $\omega_1\cdot \omega_2$ via the product of tangent vectors.

Similarly to Definition \ref{ekvi} we give

\begin{defn}\label{ekvi1} Two flat pencils are called (locally) {\rm equivalent} if one can be transformed to another by a combination of a (local) diffeomorphism and a linear change
\eqref{cambio}.
\end{defn}

The differential geometry problem of local classification of flat pencils reduces to an integrable system of differential equations (see \cite{DLZ} and references therein).

One thus arrives at the problem of local classification of semisimple Poisson pencils of the
form \eqref{bhs0}, \eqref{ldt} with a {\it given} flat pencil of metrics (i.e., with the given leading term \eqref{ldt}).
The theory of {\it central invariants} gives a parametrization of the infinitesimal deformation
space of the bihamiltonian structure \eqref{ldt}. We will not recall here the underlined cohomological
theory \cite{LZ, DLZ}; we only give the computational formulae for the central invariants.

Denote  $P^{ij}_a(\lm)$ (resp. $Q^{ij}_a(\lm)$) the components of the tensor $A^{ij}_{1,0;a}(u)$
(resp. $A^{ij}_{2,0;a}(u)$) in the canonical coordinates. Here $i,j=1,\cdots,n,\ a=1,2$. The
$i$-th ($i=1,\cdots,n$) central invariant of the semisimple bihamiltonian structure \eqref{bhs0} is defined by
\begin{equation}\label{fc}
c_i(\lm)=\frac1{3 (f^i(\lm))^2} \left[Q^{ii}_2(\lm)- \lm^i Q^{ii}_1(\lm)+
\sum_{k\ne i}\frac{(P^{ki}_2(\lm)-\lm^i P^{ki}_1(\lm))^2}{f^k(\lm)(\lm^k-\lm^i)}\right].
\end{equation}

\begin{theorem}[\cite{DLZ}]\label{thm51}
i) Each function $c_i(\lm)$ defined in \eqref{fc} depends only on $\lm^i$, $i=1, \dots, n$.
ii) Two semisimple bihamiltonian structures of the form \eqref{bhs0} with the same leading terms $\{~\,, ~\}_a^{[0]}$,
$a=1,2$ are equivalent  {\em if{}f} they have the same set of central invariants
$c_i(\lm^i)$,  $i=1,\cdots,n$.
\end{theorem}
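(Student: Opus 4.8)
The plan is to recast both statements as results about the \emph{bihamiltonian cohomology} of the pencil of hydrodynamic type determined by the leading term \eqref{ldt}, and then to compute that cohomology explicitly in canonical coordinates. First I would introduce the graded Lie algebra of local polyvector fields on the loop space of $M$, equipped with the Schouten--Nijenhuis bracket $[\,\cdot\,,\,\cdot\,]$. The two dispersionless brackets are bivectors $P_1,P_2$ with $[P_a,P_b]=0$ for all $a,b$, so $d_1:=[P_1,\,\cdot\,]$ and $d_2:=[P_2,\,\cdot\,]$ are anticommuting differentials. Infinitesimal deformations of the Poisson pencil that preserve compatibility are then pairs $(X_1,X_2)$ of bivectors with $d_1X_1=d_2X_2=0$ and $d_1X_2+d_2X_1=0$, while an infinitesimal Miura-type transformation generated by an evolutionary vector field $Y$ acts by $X_a\mapsto X_a+d_aY$, the linear changes \eqref{cambio1} supplying the remaining trivial directions. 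Thus the infinitesimal deformations modulo equivalence form a bihamiltonian cohomology group, graded by the degree in $\e$.

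Next I would localize the computation using semisimplicity. Passing to the canonical coordinates $\lm^1,\dots,\lm^n$, the leading metrics diagonalize as $g^{ij}_1=f^i\dl_{ij}$, $g^{ij}_2=\lm^i f^i\dl_{ij}$, so the dispersionless pencil splits, to leading order, into $n$ one-dimensional pieces. A spectral-sequence (or direct recursive) computation then shows that in the graded degree carrying the coefficients $A^{ij}_{1,0;a}$ and $A^{ij}_{2,0;a}$ the cohomology is spanned by exactly $n$ classes, each represented by a single function. The coboundary relations that trivialize the off-diagonal data force, for the surviving classes, the constraint $\p c_i/\p\lm^j=0$ whenever $j\ne i$; this is precisely statement (i), that $c_i$ depends only on $\lm^i$. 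The explicit cocycle representative produced by the computation is the right-hand side of \eqref{fc}.

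For part (ii), the ``only if'' direction is the assertion that the $c_i$ are genuine cohomology classes: I would verify that under an infinitesimal Miura-type transformation the expression \eqref{fc} changes by a coboundary, so that its class, hence each $c_i(\lm^i)$, is invariant, with finite transformations following by exponentiation order by order in $\e$. The ``if'' direction is the reconstruction: given two structures with the same leading term and the same central invariants, I would build the equivalence inductively in powers of $\e$. At order $\e^k$ the obstruction to matching the two structures lives in the bihamiltonian cohomology in the corresponding degree; since that cohomology vanishes in all degrees except the one carrying the central invariants, and the central-invariant discrepancy is zero by hypothesis, each obstruction is a coboundary and can be killed by a further Miura correction. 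Iterating yields the equivalence as a formal power series in $\e$.

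The main obstacle is the cohomology computation underlying the two previous paragraphs: proving that, for a semisimple pencil, the only nontrivial bihamiltonian cohomology beyond the hydrodynamic (order-zero) classes is the $n$-dimensional space of central invariants, with all higher obstruction groups vanishing. This vanishing is exactly what makes both the constraint $\p c_i/\p\lm^j=0$ and the inductive reconstruction work, and it is where semisimplicity enters essentially, reducing the bi-complex to a tractable diagonal form and letting one control the Euler--Lagrange kernel $\Img\,\p_x$ degree by degree.
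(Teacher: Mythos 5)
First, a point of orientation: the paper you are working from does not actually prove Theorem \ref{thm51} --- it is quoted from the reference \cite{DLZ} (see also \cite{LZ}), and the text explicitly declines to recall the underlying cohomological theory, giving only the computational formula \eqref{fc}. So the comparison has to be with the proof in those references. Your strategy --- viewing deformations of the Poisson pencil as cocycles for the anticommuting differentials $d_a=[P_a,\,\cdot\,]$ built from the Schouten bracket, localizing via the canonical coordinates in which the leading metrics diagonalize, identifying Miura transformations with coboundaries $d_aY$, and reconstructing the equivalence order by order in $\epsilon$ by killing obstructions --- is precisely the framework of \cite{LZ,DLZ}, so you have identified the right route and the right role for semisimplicity.

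Nevertheless there is a genuine gap, and you flag it yourself: the entire argument rests on the assertion that the bihamiltonian cohomology of a semisimple hydrodynamic pencil is concentrated in the single degree carrying the central invariants, where it is the space of $n$ functions of one variable each, and vanishes in the degrees where the obstructions to the inductive reconstruction live. This is not a routine ``spectral-sequence or direct recursive computation''; it is the whole mathematical content of the theorem and occupies the bulk of \cite{LZ} and \cite{DLZ}. Concretely: (a) statement (i) --- that the explicit combination \eqref{fc} of the coefficients $P^{ij}_a$, $Q^{ij}_a$ and the diagonal entries $f^i$ satisfies $\partial c_i/\partial\lambda^j=0$ for $j\neq i$ --- requires extracting and solving the consequences of antisymmetry, Jacobi and compatibility at orders $\epsilon$ and $\epsilon^2$ in canonical coordinates, which your proposal never does, and it is also not justified in your sketch that \eqref{fc} is the cocycle representative your abstract computation would produce; (b) the ``if'' direction of (ii) needs vanishing of the obstruction groups at every order $\epsilon^k$, $k\geq 3$, which is again asserted rather than established. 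As written, your proposal is a correct plan whose decisive lemma is left as a black box, so it does not yet constitute a proof.
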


Note that linear transformations \eqref{cambio} yield fractional linear transformations of the canonical coordinates
$$
\lambda^i \mapsto \frac{\kappa_{21}+\lambda^i \kappa_{22}}{\kappa_{11}+\lambda^i \kappa_{12}}, \quad i=1, \dots, n.
$$
The transformation law of central invariants is given by
\begin{equation}\label{law}
c_i \mapsto \Delta^{-1} (\kappa_{11} +\kappa_{12}\lambda^i)\, c_i, \quad i=1, \dots, n
\end{equation}
where $\Delta=\kappa_{11}\kappa_{22}-\kappa_{12}\kappa_{21}$. For the unimodular transformations, $\Delta=1$,  the half-differentials
$$
\Omega_i=c_i(\lambda^i) (d\lambda^i)^{1/2}
$$
remain invariant, while for the simultaneous rescalings
$$
\{~\,, ~\}_a \mapsto \kappa\, \{~\,, ~\}_a, \quad a=1, \, 2
$$
one has
$$
c_i \mapsto \kappa^{-1} c_i , \quad i=1, \dots, n
.
$$
Observe that the central invariants do not change when rescaling only the {\it first} Poisson bracket without changing the second one. Because of this the central invariants of a Poisson pencil are well defined up to a common constant factor.

\section{Preliminaries from Poisson geometry}\label{sec21}

Before explaining the Drinfeld - Sokolov procedure let us first recall the clasical construction of the linear Poisson bracket on the dual space $\g^*$ to a finite dimensional Lie algebra $\g$ (the so-called {\it Lie - Poisson bracket}). It is uniquely defined by the following requirement: given two linear functions $a$, $b$ on $\g^*$,
$a, \, b\in \g$, their Poisson bracket coincides with the commutator in $\g$:
\begin{equation}\label{lie-pois}
\{ a, b\} =[a,b].
\end{equation}
Choosing a basis in the Lie algebra
\begin{equation}\label{lie}
\g={\rm span}(e_1, \dots, e_N), \quad [e_i, e_j]=\sum_{k=1}^N c_{ij}^k e_k
\end{equation}
one obtains the Lie - Poisson bracket in the associated dual system of coordinates $(x_1, \dots, x_N)$ on $\g^*$ written in the following form
\begin{equation}\label{lie-pois1}
\{ x_i, x_j\} =\sum_{k=1}^N c_{ij}^k x_k, \quad i, \, j=1, \dots, N.
\end{equation}
The Jacobi identity for the linear Poisson bracket (\ref{lie-pois1}) is equivalent to the Jacobi identity for the Lie algebra (\ref{lie}). The Poisson bivector \eqref{lie-pois1} will be denoted
\beq\label{pig}
\pi_\g\in \Lambda^2 T_x\g^*.
\eeq

Linear Hamiltonians
$$
H_a(x) =\la a, x\ra, \quad a\in \g, \quad x\in \g^*
$$
generate the coadjoint action of the Lie group $G$ associated with $\g$:
\begin{equation}\label{coad}
\dot x =\{ x, H_a\} \quad \Leftrightarrow \la b, x(t) \ra =\la e^{- t \ad a} b, x(0)\ra\quad \mbox{for any}\quad b\in \g.
\end{equation}

A simple generalization is given by linear inhomogeneous Poisson bracket
\begin{equation}\label{lie-pois2}
\{ x_i, x_j\} =\sum_{k=1}^N c_{ij}^k x_k+c_{ij}^0.
\end{equation}
It can be interpreted as the Lie - Poisson bracket on the one-dimensional central extension
$$
0\to \mathbb{C}\kk \to \tilde \g \to \g\to 0
$$
of the Lie algebra by means of the 2-cocycle
\begin{equation}
c^0(e_i, e_j) =c_{ij}^0, \quad c^0([a,b],c) +c^0([c,a],b)+c^0([b,c],a)=0.
\end{equation}

\bigskip
%\bigskip

Let us now recall the setting of the Marsden - Weinstein Hamiltonian reduction procedure \cite{mw, mr}. Given a Poisson manifold ${\mathcal M}$, a family of Hamiltonians
$$
H_1(x), \dots, H_N(x) \in {\mathcal C}^\infty ({\mathcal M})
$$
forming a $N$-dimensional Lie subalgebra $\g$ in ${\mathcal C}^\infty ({\mathcal M})$
$$
\{ H_i, H_j\}=\sum_{k=1}^N c_{ij}^k H_k(x), \quad c_{ij}^k = \mbox{const}
$$
generates a Poisson action on ${\mathcal M}$ of the connected and simply connected Lie group $G$ associated with $\g$,
assuming that any nontrivial linear combination of the generators $H_1$, \dots, $H_N$ is not a Casimir of the Poisson bracket on ${\cal M}$.
The vector valued function
$$
{\mathcal P}(x) =(H_1(x), \dots, H_N(x))\in \g^*
$$
is called the {\it moment map} for the Poisson action. The diagram
$$
\begin{CD}
{\mathcal M}  @>{\cal P}>> \g^*\\
@V{g}VV @VV{{\rm Ad}^* g}V\\
{\mathcal M}  @>>{\cal P}> \g^*
\end{CD}
$$
is commutative for any $g\in G$.

Given a Hamiltonian $H\in {\cal C}^\infty ({\cal M})$ invariant with respect to the action of the group $G$
$$
\{ H, H_i\}=0, \quad i=1, \dots, N
$$
the  goal of the reduction procedure is to reduce the order of the Hamiltonian system
\beq\label{ham11}
\dot x =\{ x, H\}
\eeq
i.e., to find a Poisson manifold $({\cal M}^{\rm red}, \{~\,, ~\}_{\rm red})$ of a lower dimension and a
Hamiltonian $H_{\rm red}\in {\cal C}^\infty({\cal M}^{\rm red})$  such that problem of integration of the
Hamiltonian system \eqref{ham11} is reduced to the one for
$$
\dot y=\{y, H_{\rm red}\}_{\rm red}, \quad y\in {\cal M}^{\rm red}.
$$

The construction of the reduced space can be given as follows.

Consider a smooth common level surface of the Hamiltonians
$$
{\mathcal M}_h:=\{ x\in {\mathcal M}\, | \,H_1(x)=h_1, \dots, H_N(x)=h_N\} ={\mathcal P}^{-1}(h)\nn\\
$$
where
$$
h=(h_1, \dots, h_N)\in \g^*
$$
is a regular value of the moment map.
Denote $G_h\subset G$ the stabilizer of $h$ with respect to the coadjoint action of $G$ on $\g^*$. The Lie algebra $\g_h$ of the stabilizer is the kernel of the map
\beq\label{stab}
\pi_\g: \g\simeq T_h^* \g^* \to T_h \g^* \simeq \g^*
\eeq
where $\pi_\g$ is the
Poisson bivector \eqref{pig} on $\g^*$.
The group
 $G_h$ acts freely on ${\cal M}_h$.
Assume  this action to be free also on some neighborhood of ${\cal M}_h\subset {\cal M}$ and that the orbit space ${\mathcal M}_h/G_h$ has a structure of a smooth manifold. Define
%Then one obtains the {\it reduced Poisson manifold}
$$
{\mathcal M}^{\rm red}_h:={\mathcal M}_h/G_h.
$$
%The Poisson algebra of functions on ${\mathcal M}^{\rm red}_h$ can be realized as the image of the restriction homomorphism
%$$
%\left[ {\cal C}^\infty \left( {\cal M}\right)\right]^{G_h}\to {\cal C}^\infty \left( {\mathcal M}^{\rm red}_h\right)
%$$
%of the algebra of $G_h$-invariant functions defined on some %neighborhood of the submanifold ${\cal M}_h\subset {\cal M}$.

We will give a construction of the reduced Poisson bracket on ${\mathcal M}^{\rm red}_h$ for the simplest case $G_h=G$. In this particular case the Poisson brackets of the generators all vanish on ${\cal M}_h$:
$$
\{ H_i, H_j\}|_{{\cal M}_h}=0, \quad i, j=1, \dots, N.
$$
Functions on ${\mathcal M}^{\rm red}_h$ can be identified with
$G$-invariant functions on ${\cal M}_h$.
For any two $G$-invariant functions $\alpha$, $\beta$ on ${\cal M}_h$ denote
$\hat \alpha$, $\hat\beta$ arbitrary extensions of these two functions on a neighborhood of ${\cal M}_h$.

\begin{defn} Under the above assumptions the Poisson bracket on the reduced
space ${\cal M}_h^{\rm red}={\cal M}_h /G$ defined by the formula
\beq\label{dir31}
\{ \alpha, \beta\}_{\rm red}:= \{ \hat\alpha, \hat\beta\}|_{{\cal M}_h}
\eeq
%where $\hat\alpha$, $\hat\beta$ are arbitrary extensions to a neighborhood of ${\cal M}_h\subset {\cal M}$ of the $G_h$-invariant functions $\alpha, \, \beta\in {\cal C}^\infty({\cal M}_h)$
is called the {\rm reduced Poisson bracket}.
\end{defn}

It is easy to see that the rhs of \eqref{dir31} is a $G$-invariant function on ${\cal M}_h$. Moreover, the definition does not depend on the extensions $\hat\alpha$, $\hat\beta$ of the $G$-invariant functions $\alpha$, $\beta$.

\section{The Drinfeld - Sokolov reduction}\label{sec-3}

In this section we will briefly outline the main steps of the Drinfeld - Sokolov reduction for the case of untwisted affine Lie algebras. Proofs of all the statements of this section can be found in \cite{DS}.

Let $\g$ be a simple Lie algebra over $\mathbb{C}$, $G$ the associated connected and simply connected Lie group. Fix an
invariant bilinear form $\la ~\,, ~ \ra_\g$ on $\g$.
The central extension
$$
0\to \mathbb{C}\kk\to\hg \to L(\g)\to 0
$$
of the loop algebra $L(\g ):=C^\infty(S^1, \g)$ is defined as the direct sum of vector
spaces $\hg=L(\g )\oplus \mathbb{C}\kk$ equipped with the following Lie bracket
\[[q(x)+a \kk, p(x)+b \kk]=[q(x), p(x)]+ \omega(q, p)\kk.\]
Here the $2$-cocycle $\omega$ is defined by
\begin{equation}\label{cocycle}
\omega(q, p)=-\int_{S^1}\la q(x),p'(x)\ra_\g\, dx.
\end{equation}
The integral over the circle is normalized in such a way that
$$
\int_{S^1} \, dx=1.
$$

Let ${\mathcal M}\subset \hg^*$ be the subspace of linear functionals taking value $\e$ at the central element k. The space ${\mathcal M}$ can be naturally identified
with the space of first order linear differential operators
\begin{equation}\label{g-dual}
{\mathcal M}=\left\{\e\, \frac{d}{dx} + q(x)\, | \, q(x) \in L(\g)\right\}
\end{equation}
in such a way that the coadjoint action of the loop group $\hat G=L(G)$ restricted
 onto ${\mathcal M}$ is given by the gauge transformations
\eqa\label{gauge}
\begin{split}
&\e\,\frac{d}{dx} + q(x) \mapsto \Ad_{g(x)} \left(\e\,\frac{d}{dx} + q(x)\right)
\\
&
\quad\quad\quad\qquad~ =\e\,\frac{d}{dx} +\Ad_{g(x)} q(x) +\Omega_{g(x)} \left(\e\,\frac{dg}{dx}\right)
\end{split}
\eeqa
where the $\g$-valued 1-form $\Omega_g:T_gG\to T_eG=\g$
is defined by
$$
\Omega_g(X) =- dR_{g^{-1}} X
$$
and
$$
R_h:G\to G
$$
is the right shift by $h\in G$.
It is a Poisson action with respect to the standard linear Lie - Poisson bracket on the dual space $\hg^*$ to the Lie algebra $\hg$. Recall that the restriction of the Poisson bracket on the subspace ${\mathcal M}\subset \hg^*$ in our realization (\ref{g-dual}) is uniquely determined by the following condition: the  Poisson bracket of two linear functionals
\beq\label{linear1}
\begin{split}
&H_{a(x)}[q]=\int_{S^1} \la a(x), q(x) \ra_\g\, dx, \quad H_{b(x)}[q]=\int_{S^1} \la b(x), q(x) \ra_\g\, dx
\\
& a(x), ~ b(x) \in L(\g)
\end{split}
\eeq
coincides with the Lie bracket in $\hg$:
\begin{equation}\label{pb-dual}
\{ H_{a(x)}, H_{b(x)}\} = H_{c(x)} +\omega(a,b), \quad c(x) = [a(x), b(x)].
\end{equation}
Observe that this functional can be also written in the following elegant form
\begin{eqnarray}\label{pb-dual1}
&&
\{ H_{a(x)}, H_{b(x)}\} =\frac1{\e}\int_{S^1} \la a(x), [b(x), \e\,\frac{d}{dx} +q(x)]\ra_\g\, dx
\nn\\
&&
\\
&&=-\frac1{\e}\int_{S^1}\la a(x), \e\,b_x(x)+\ad_{q(x)} b(x)\ra\, dx.
\nn
\end{eqnarray}
In these (and also subsequent) formulae we denote $H[q]$ the value of a functional $H$ on the operator
$$
\e\,\frac{d}{dx} + q(x) \in {\mathcal M}
$$
for brevity.

It is a standard fact from the theory of Lie - Poisson brackets that the linear Hamiltonians (\ref{linear1})
generate the coadjoint action (\ref{gauge}). The Drinfeld - Sokolov construction can be interpreted as the
Hamiltonian reduction procedure applied to a certain subgroup of the gauge group $\hat G$.

\begin{rem} Given a basis $I^1$, \dots, $I^N$ in $\g$ such that
\eqa
&&
[I^i, I^j]=c^{ij}_k I^k
\nn\\
&&
\la I^i, I^j\ra_\g = g^{ij}
\nn
\eeqa
one obtains a system of coordinates
\beq\label{uu}
u^i=\la I^i, \xi\ra, \quad \xi\in\g^*, \quad i=1, \dots, N
\eeq
on the dual space $\g^*$. The Poisson bracket \eqref{pb-dual1} can be written in the form
\beq\label{pb-dual10}
\{ u^i(x), u^j(y)\}=\frac1{\e} \, c^{ij}_ku^k(x) \delta(x-y) - g^{ij}\, \delta'(x-y).
\eeq
This form of the Poisson bracket is similar to \eqref{bhs0} but the $\e$-expansion begins with terms of order $\epsilon^{-1}$. These terms will disappear after the reduction.
\end{rem}

We need some preliminaries from the simple Lie algebras theory in order to develop a suitable infinite dimensional analogue of the above construction of Marsden - Weinstein reduction.

Denote $n$ the rank of the simple Lie algebra $\g$ over $\mathbb{C}$. Choose a Cartan subalgebra $\h\subset \g$ and denote
$X_i, H_i, Y_i$ $(i=1,\cdots,n)$ a set of Weyl generators of $\g$ associated with $\h=\mbox{span}(H_1, \dots, H_n)$.
The generators of the Cartan subalgebra can be identified with the basis of simple coroots
\begin{equation}\label{coroot}
H_i=\alpha_i^\vee\in\h, \quad i=1, \dots, n
\end{equation}
associated with a given basis of simple roots $\alpha_1$, \dots, $\alpha_n\in\h^*$.
Recall the commutation relations between the generators:
\beq\label{commute}
\begin{split}
&[H_i, H_j]=0,
\\
&[H_i, X_j]=A_{ij} X_j, \quad [H_i, Y_j]=-A_{ij} Y_j,
\\
&[X_i, Y_j]=\delta_{ij} H_i.
\end{split}
\eeq
Here $\left(A_{ij}\right)$ is the Cartan matrix of the Lie algebra $\g$,
\beq\label{cartan}
A_{ij}=\la \alpha_j, \alpha_i^\vee\ra.
\eeq
The full set of defining relations in the simple Lie algebra $\g$ is obtained by adding to (\ref{commute}) the Serre relations
$$
(\ad X_i)^{1-A_{ij}}X_j =0, \quad (\ad Y_i)^{1-A_{ij}}Y_j=0, \quad
i\ne j.
$$

The choice of Cartan subalgebra defines the {\it principal gradation} on $\g$
\beq\label{ts-1}
\g=\bigoplus_{1-h\le j\le h-1}\, \g^j
\eeq
such that $X_i\in\g^1,\ Y^i\in\g^{-1}, \ H_i\in\g^0$. Here $h$ is the Coxeter number of $\g$.

Let $\n=\n^+,\, \n^-$ be the nilpotent
subalgebras generated by $\{X_i\},\, \{Y_i\}$ respectively. Introduce also the Borel subalgebras $\fb=\fb^+=\n^+\oplus\h,\
\fb^-=\n^-\oplus\h$.

Let $N\subset G$ be the subgroup of the Lie group $G$ associated with the Lie subalgebra $\n\subset\g$.
We will apply the Hamiltonian reduction procedure to the coadjoint action (\ref{gauge}) of the loop
group $\hat N=L(N)$ on the subspace ${\mathcal M}\subset \hg^*$.

As we already know the coadjoint action (\ref{gauge}) of the subgroup
$L(N)$ is generated by the linear Hamiltonians of the form
$$
H_{v(x)}[q]=\int_{S^1} \la v(x), q(x)\ra_\g\, dx, \quad v(x) \in \hat\n=L(\n).
$$
Therefore the moment map
\begin{equation}\label{moment1}
\mathcal{P}: {\mathcal M}\to \hat\n^*
\end{equation}
associated with the coadjoint action of $\hat N$ is given by the same formula considered as a linear functional on $\hat\n$
\begin{equation}\label{moment2}
\mathcal{P}(q(x)) (v(x)) = \int_{S^1} \la v(x), q(x)\ra_\g\, dx, \quad v(x) \in \hat\n.
\end{equation}
As
$$
\g=\fb\oplus \n^- %\quad\mbox{and}\quad \n^\bot=\fb,
$$
%where $\n^\bot$ is
and the orthogonal complement of $\n$ w.r.t. the invariant bilinear form $\la\ ,\ \ra_\g$ coincides with $\fb$,
one can identify the dual space $\n^*$ with the quotient
\begin{equation}\label{nil-dual}
\n^*=\g/\fb\simeq \n^-.
\end{equation}
Thus the moment map (\ref{moment2}) can be identified with the orthogonal projection of the $\g$-valued function $q(x)$ onto the ``lower triangular part"
\beq\label{moment3}
\begin{split}
&
\mathcal{P}(q(x)) = \pi^-(q(x))
\\
&\mbox{where} \quad \pi^-:\g\to \n^- \quad \mbox{is the natural projection}.
\end{split}
\eeq

Let us now choose a particular value of the moment map.
Let
\begin{equation}\label{choice}
I=\sum_{i=1}^n Y_i \in \n^-
\end{equation}
be a  {\it principal nilpotent element} (see \cite{kostant}) of the Lie algebra $\g$.
Denote
\begin{equation}\label{orbit}
{\mathcal M}^I:=\mathcal{P}^{-1}(I)=I+\hb
\end{equation}
the level surface of the moment map
considering $I$ as a constant map $S^1 \to \n^-$.

{}From the commutation relations (\ref{commute}) it follows that the element $I\in\n^*$ is
invariant with respect to the coadjoint action of $\n$,
$$
[I, \n ]\subset \fb.
$$
Therefore the level surface ${\mathcal M}^I$ is invariant with respect to the gauge action of the nilpotent group
$N$. By definition the functionals on the quotient ${\mathcal M}^I/\hat N$ are the gauge invariant functionals on ${\mathcal M}^I$. We will now construct a ``system of coordinates" on the quotient space.

According to the theory of simple Lie algebras \cite{kostant}, the map
$$
\ad_I:\n \to \fb
$$
is injective.
We fix a subspace $V$ of $\fb$ such that
\begin{equation}\label{sub-v}
\fb=V\oplus[I,\n],
\end{equation}
so $\dim V=\dim \fb-\dim \n=n$.

\begin{prop} The Hamiltonian action of the loop group $\hat N$ on ${\mathcal M}^I$ is free, namely, each orbit contains a unique operator of the form
$$
\e\,\frac{d}{dx}+q^{\rm can}(x) \quad \mbox{with}\quad q^{\rm can}(x)\in V.
$$
\end{prop}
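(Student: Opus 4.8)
The two assertions---freeness of the action and existence of a unique canonical representative in each orbit---both follow from one statement: for every $q$ with $\e\,\frac{d}{dx}+q\in{\mathcal M}^I$ there is a \emph{unique} $g\in\hat N$ whose gauge transform $\Ad_{g}\!\left(\e\,\frac{d}{dx}+q\right)$ equals $\e\,\frac{d}{dx}+q^{\mathrm{can}}$ with $q^{\mathrm{can}}-I\in L(V)$ (the normal form of the Proposition). Granting this, if $h\in\hat N$ fixes $q$ then both $g$ and $gh$ carry $q$ to the same canonical operator $g\cdot q$, so $g=gh$ and $h=e$, which is freeness; and if $q'=h\cdot q$ is any other point of the orbit, its unique normalizing element is $gh^{-1}$, producing the \emph{same} operator $g\cdot q$, so the canonical representative depends only on the orbit and is therefore unique. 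Since $N$ is unipotent, $\exp:\hn\to\hat N$ is a bijection, so uniqueness of $g$ is equivalent to uniqueness of $\theta$ in $g=\exp\theta$, $\theta\in\hn$.

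To produce $\theta$ I would work in the principal gradation \eqref{ts-1}. Choose the complement $V$ in \eqref{sub-v} to be graded, $V=\bigoplus_{j\ge 0}V^j$ with $V^j=V\cap\g^j$; this is possible because $[I,\n]=\bigoplus_{j\ge 0}\ad_I(\g^{j+1})$ is already graded. Since $\ad_I:\n\to\fb$ is injective and lowers degree by one, each $\ad_I:\g^{j+1}\to\g^j$ is a linear isomorphism onto its image, and \eqref{sub-v} refines grade by grade to
\beq
\g^j=V^j\oplus\ad_I(\g^{j+1}),\qquad 0\le j\le h-1,
\nn
\eeq
with $\ad_I(\g^{h})=0$, so that $\g^{h-1}=V^{h-1}$.

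Writing $q=I+b$, $b=\sum_{j\ge 0}b_j\in L(\fb)$ and $\theta=\sum_{j\ge 1}\theta_j$, the transformed potential is $q^{\mathrm{new}}=\Ad_{\exp\theta}(I+b)-\e\,(\p_x\exp\theta)(\exp\theta)^{-1}$; its grade-$(-1)$ part is exactly $I$, consistent with the already-noted invariance of ${\mathcal M}^I$ under $\hat N$. I would then fix the gauge by induction on $j=0,1,\dots,h-1$. The only place where $\theta_{j+1}$ enters the grade-$j$ component of $q^{\mathrm{new}}$ is the single commutator $[\theta_{j+1},I]=-\ad_I\theta_{j+1}$ coming from $\Ad_{\exp\theta}I$: all higher iterated commutators, the $\Ad_{\exp\theta}b$ term, and the connection term $(\p_x\exp\theta)(\exp\theta)^{-1}$ (whose degree-$j$ part involves $\p_x\theta_j$ and products of $\theta_1,\dots,\theta_j$) depend only on $\theta_1,\dots,\theta_j$. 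Hence the grade-$j$ part of $q^{\mathrm{new}}-I$ has the form $-\ad_I\theta_{j+1}+R_j(\theta_{\le j},b)$, and the requirement that it lie in $V^j$ becomes, after projecting onto $\ad_I(\g^{j+1})$ along $V^j$, the equation $\ad_I\theta_{j+1}=\mathrm{pr}_{\ad_I(\g^{j+1})}R_j$. Because $\ad_I$ is a bijection onto $\ad_I(\g^{j+1})$, this determines $\theta_{j+1}$ uniquely; at $j=h-1$ one has $\ad_I(\g^h)=0$, so no further adjustment is possible or needed and the top component lies automatically in $V^{h-1}$. This yields $\theta$, hence $g$, uniquely.

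All the expansions above are routine once the degree bookkeeping is in place, so the crux is the structural input: verifying that, for a graded choice of $V$, the splitting \eqref{sub-v} refines to the grade-by-grade isomorphisms $\g^j=V^j\oplus\ad_I(\g^{j+1})$, and that the resulting gauge-fixing system is triangular---that is, $\theta_{j+1}$ occurs in the grade-$j$ equation only through the invertible operator $\ad_I$, every other term being already determined at earlier stages. The injectivity of $\ad_I:\n\to\fb$ from Kostant's theory is precisely what makes each step uniquely solvable, and the $\e\,\frac{d}{dx}$-term, being degree-raising and free of $\theta_{j+1}$, does not disturb this triangularity.
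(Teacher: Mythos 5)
Your argument is correct and is essentially the same proof the paper relies on: your grade-by-grade gauge fixing with a graded complement $\g^j=V^j\oplus\ad_I(\g^{j+1})$, in which $\theta_{j+1}$ enters the grade-$j$ equation only through the injective operator $\ad_I$ while all other terms are already determined, is precisely the Drinfeld--Sokolov recursion \eqref{recur1}--\eqref{recur2} that the paper invokes (with the graded choice of $V$ it fixes at the start of Section \ref{principale}), merely written in the parametrization $g=\exp\theta$ instead of $S=1+S_1+S_2+\cdots$. Your only addition is the explicit deduction of freeness and of orbit-wise uniqueness of the canonical representative from the uniqueness of the normalizing element, a step the paper leaves implicit.
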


According to this result of \cite{DS} the reduced Poisson manifold can be identified
with the space of operators written in the canonical form
\beq\label{zh-2}
{\mathcal M}^I/\hat N \simeq \left\{\e\, \frac{d}{dx}+q^{\rm can}(x) \ |\ q^{\rm can}(x)\in V\right\}
\eeq
for the given choice of the subspace $V\subset\fb$ of the form (\ref{sub-v}). Let us now construct a bihamiltonian structure on this reduced manifold.

Let us first do the following trivial observation: given an element $\alpha\in\g$, the formula
\begin{eqnarray}\label{pencil}
&&
\{ H_{a(x)}, H_{b(x)}\}_\lambda =\frac1{\e}\int_{S^1} \la a(x), [b(x), \e\,\frac{d}{dx} +q(x)-\lambda\,\alpha]\ra_\g\, dx
\nn\\
&&
\\
&&
= -\frac1{\e}\int_{S^1}\la a(x), \e\, b_x(x)+\ad_{q(x)} b(x)\ra\, dx+\lambda\,\frac1{\e}\int_{S^1} \la a(x), \ad_\alpha b(x)\ra_\g\, dx
\nn
\end{eqnarray}
(cf. (\ref{pb-dual1})) defines a Poisson bracket on ${\mathcal M}$ for an arbitrary $\lambda$. Indeed, the translation
$$
q(x) \mapsto q(x) -\lambda\,\alpha
$$
for any $\lambda$ is a Poisson map for a linear Poisson bracket. We obtain thus a Poisson pencil on ${\mathcal M}$.

Let us now choose $\alpha$ to be a generator of the (one-dimensional) centre
of the nilpotent subalgebra,
\begin{equation}\label{centr}
\alpha\in\n, \quad [\alpha,\n]=0.
\end{equation}

The functionals on the reduced space ${\mathcal M}^I/\hat N$ can be realized as functionals
on ${\cal M}^I$ invariant with respect to the gauge action of $\hat N$.
Let us call them simply gauge invariant functionals for brevity.

\begin{prop} Given two gauge invariant functionals $\phi[q]$, $\psi[q]$ on ${\mathcal M}^I$, then for any
of their extensions $\hat{\phi}[q]$, $\hat{\psi}[q]$ to $\mathcal{M}$, the functional obtained by restricting
the
Poisson bracket
\beq
\{\hat{\phi}, \hat{\psi}\}_\lm
\eeq
to $\mathcal{M}^I$ is again a gauge invariant functional on ${\mathcal M}^I$.
\end{prop}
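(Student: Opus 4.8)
The plan is to show that the Poisson bracket $\{\hat\phi,\hat\psi\}_\lambda$, restricted to $\mathcal{M}^I$, is gauge invariant by reducing the problem to the $\lambda=0$ case, which is the standard Marsden--Weinstein reduction statement already explained in Section \ref{sec21}. Recall from \eqref{pencil} that the $\lambda$-bracket is obtained from the bracket at $\lambda=0$ by the constant translation $q(x)\mapsto q(x)-\lambda\,\alpha$ on $\mathcal{M}$, which is a Poisson isomorphism for the Lie--Poisson structure. So the whole pencil is controlled by this single translation, and I would exploit that systematically.

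First I would translate the setup. The shifted bracket $\{\cdot,\cdot\}_\lambda$ is generated, via \eqref{pencil}, by the operator $\e\,\frac{d}{dx}+q(x)-\lambda\,\alpha$ in place of $\e\,\frac{d}{dx}+q(x)$. The key point is the choice \eqref{centr}: since $\alpha$ lies in the centre of $\n$, we have $[\alpha,\n]=0$, so $\ad_\alpha$ annihilates all of $\hat\n=L(\n)$. Consequently, for $v(x)\in\hat\n$ the moment-map functional $H_{v(x)}$ has the \emph{same} Hamiltonian vector field with respect to $\{\cdot,\cdot\}_\lambda$ as it does with respect to $\{\cdot,\cdot\}_0$: the extra term $\lambda\,\frac1{\e}\int\la a,\ad_\alpha b\ra$ in \eqref{pencil} vanishes identically when $b(x)\in\hat\n$. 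This is the crucial computation, and it says that the infinitesimal gauge action generated by $\hat N$ is \emph{identical} for every member of the pencil. In particular $I$ remains a valid (shifted) moment value and $\mathcal{M}^I=I+\hb$ is still preserved by the gauge action for all $\lambda$, as already noted from $[I,\n]\subset\fb$.

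Next I would run the standard argument from Definition \ref{dir31}. A functional $\chi$ on $\mathcal{M}^I$ is gauge invariant precisely when $\{\chi, H_{v(x)}\}_\lambda=0$ on $\mathcal{M}^I$ for all $v(x)\in\hat\n$; by the previous paragraph this condition is the same for every $\lambda$, so ``gauge invariant'' is an unambiguous, $\lambda$-independent notion. Given gauge invariant $\phi,\psi$ and arbitrary extensions $\hat\phi,\hat\psi$ to $\mathcal{M}$, I must show $\{\hat\phi,\hat\psi\}_\lambda$ restricted to $\mathcal{M}^I$ is again gauge invariant, i.e. that its bracket with each $H_{v(x)}$ vanishes on $\mathcal{M}^I$. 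Here I would invoke the Jacobi identity for $\{\cdot,\cdot\}_\lambda$ (valid because each member of the pencil is a genuine Poisson bracket):
\begin{equation}
\{\{\hat\phi,\hat\psi\}_\lambda, H_{v}\}_\lambda
=\{\{\hat\phi, H_{v}\}_\lambda,\hat\psi\}_\lambda
+\{\hat\phi,\{\hat\psi, H_{v}\}_\lambda\}_\lambda .
\nn
\end{equation}
Since $\phi,\psi$ are gauge invariant, both $\{\hat\phi,H_v\}_\lambda$ and $\{\hat\psi,H_v\}_\lambda$ vanish \emph{on} $\mathcal{M}^I$; the point is that their brackets with $\hat\psi,\hat\phi$ then also vanish on $\mathcal{M}^I$, because the tangent directions to $\mathcal{M}^I$ are exactly the gauge directions (the image of $\ad$ of $\hat\n$ up to the moment constraint), along which a gauge-invariant quantity is constant. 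Finally I would check independence of the chosen extensions: two extensions of $\phi$ differ by a functional vanishing on $\mathcal{M}^I$, whose Hamiltonian vector field is tangent to $\mathcal{M}^I$ modulo the gauge directions, so the restriction of the bracket is unaffected.

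The main obstacle is the extension-independence and the precise identification of ``differentiating along $\mathcal{M}^I$'' with ``differentiating along gauge orbits.'' In the finite-dimensional Marsden--Weinstein picture this is clean because $G_h=N$ acts with tangent spaces to the level set matching the kernel/image structure of the Poisson bivector; in the present loop-group setting one must verify the analogous matching, namely that the annihilator of the gauge directions inside the conormal to $\mathcal{M}^I$ is spanned precisely by the differentials of the moment components, so that a bracket vanishing in all gauge directions on $\mathcal{M}^I$ is well defined on the quotient. Because $\alpha$ is central in $\n$, the $\lambda$-dependence drops out of every one of these structural statements, so once the $\lambda=0$ case is settled the whole pencil follows at no extra cost; the real work is the careful bookkeeping of the infinite-dimensional reduction, which is exactly the content imported from \cite{DS} and Section \ref{sec21}.
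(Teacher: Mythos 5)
The paper itself offers no proof of this proposition: Section \ref{sec-3} explicitly defers all proofs to \cite{DS}, so there is no internal argument to compare yours against, and a self-contained proof is genuinely added value. Your architecture is the right one, and its two pillars are sound. First, the observation that $[\alpha,\n]=0$ kills the $\lambda$-dependent term in \eqref{pencil} whenever one argument of the bracket is a moment functional $H_{v}$, $v\in\hn$, is exactly the key point: it shows that all members of the pencil assign the same Hamiltonian vector fields to the $H_v$, hence induce the same gauge action, the same moment map, and the same ($\lambda$-independent) notion of gauge invariance, so the Marsden--Weinstein argument of Section \ref{sec21} can be run for each $\{~\,,~\}_\lambda$ separately at no extra cost. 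Second, the Jacobi identity applied to $\{\{\hat\phi,\hat\psi\}_\lambda,H_v\}_\lambda$, together with connectedness of $L(N)$ (which holds since $\exp:\n\to N$ is an isomorphism), is the correct mechanism for concluding invariance of the restricted bracket.

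There is, however, one false assertion at the crucial step, where you claim that ``the tangent directions to $\mathcal{M}^I$ are exactly the gauge directions.'' This is not true: the tangent space to $\mathcal{M}^I=I+\hb$ is all of $L(\fb)$, of which the gauge directions form a proper subspace; were the two equal, the quotient $\mathcal{M}^I/\hat N$ would be trivial instead of the $n$-component reduced space. What your Jacobi-identity step actually requires is either of the following correct statements, both available from material already on your page. (a) The Hamiltonian vector field $X_{\hat\psi}$ of \emph{any} extension of a gauge-invariant functional is tangent to $\mathcal{M}^I$: indeed, its pairing with the constraints is $\pm\{\hat\psi,H_v\}_\lambda$, which vanishes on $\mathcal{M}^I$ because the gauge fields $X_{H_v}$ are tangent to $\mathcal{M}^I$ and $\psi$ is invariant along their orbits; then $\{\{\hat\phi,H_v\}_\lambda,\hat\psi\}_\lambda$ is the derivative, along a vector field tangent to $\mathcal{M}^I$, of a functional vanishing identically on $\mathcal{M}^I$, hence it vanishes there. (b) Dually, any functional $F$ vanishing on $\mathcal{M}^I$ has differential which, at points of $\mathcal{M}^I$ and under the identification by $\la~\,,~\ra_\g$, lies in the conormal space $L(\n)$ (using $\fb=\n^\perp$ and nondegeneracy); hence $X_F$ restricted to $\mathcal{M}^I$ is a gauge direction with a point-dependent parameter $w_q\in\hn$, along which the invariant functional has zero derivative. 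Replacing your sentence by (a) or (b) closes the argument; with that repair your proof is complete and, unlike the paper, does not outsource the statement to \cite{DS}.
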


According to this result,
the projection of the Poisson pencil from ${\mathcal M}$ to the
reduced space ${\mathcal M}^I/\hat N$ is again a Poisson pencil.
In principle this completes the Drinfeld - Sokolov construction, although
the explicit realization of the bihamiltonian structure on the reduced space strongly depends
on the choice of the subspace $V$ in $(\ref{sub-v})$. Changing the subspace yields a Miura-type
transformation of the resulting bihamiltonian structure. The resulting bihamiltonian structure
\beq\label{dsbh}
\{~\,, ~\}_\lm=\{~\,, ~\}_2-\lm\,\{~\,, ~\}_1
\eeq
is called the Drinfeld - Sokolov bihamiltonian structure associated to the simple Lie algebra $\g$.
The commuting Hamiltonians of the
associated integrable hierarchy can be constructed as (formal) spectral invariants of the differential operator
\beq\label{zh-1}
\e\,\frac{d}{dx} + q^{\rm can}(x) +I - \lambda \alpha.
\eeq

In the subsequent sections, we will recall the explicit representations, following \cite{DS},
of the reduced space and also of the bihamiltonian structures associated to the simple Lie algebras of type
$A$-$B$-$C$-$D$ in
terms of pseudo-differential operators.

%For practical purposes it is convenient to chose a graded basis in $\n_{\rm dual}$ using decomposition
%\beq\label{ndualb1}
%\n_{\rm dual} =\h\oplus \n_{\rm dual}^{-j}, \quad  \n_{\rm dual}^{-j}\subset \g^{-j}
%\eeq
%where
%\eqa\label{ndualb2}
%&&
%\n_{\rm dual}^{-j}=\g^{-j}, \quad \mbox{if}\quad j\quad\mbox{is not an exponent of}\quad \g
%\nn\\
%&&
%\\
%&&
%\dim \g^{-m_k}/ \n_{\rm dual}^{-m_k} = \mbox{multiplicity of the exponent}\quad m_k
%\nn
%\eeqa
%(we use that, according to \cite{kostant} the centralizer ${\rm Ker}\, \ad_I$ is a commutative subalgebra in $\g$ having generators only in the degrees $-m_1$, \dots, $-m_n$, and the number of generators in the degree $-m_k$ is equal to the multiplicity of exponent $m_k$).

\medskip

At the end of this section we also mention an alternative approach to the Drinfeld - Sokolov reduction,
due to P.\,Casati and M.\,Pedroni \cite{cp}. They start from the bihamiltonian structure
\begin{eqnarray}
&&
\{ H_{a(x)}, H_{b(x)}\}_1 =- \int_{S^1} \la a(x), \ad_\alpha b(x)\ra_\g\, dx
\nn\\
&&
\nn\\
&&
\{ H_{a(x)}, H_{b(x)}\}_2=-\int_{S^1}\la a(x), \e\,b_x(x)+\ad_{q(x)} b(x)\ra\, dx
\nn
\end{eqnarray}
on ${\cal M}$ with $\alpha$ chosen as in \eqref{centr}. Then they apply the procedure of bihamiltonian reduction,
inspired by the more general Marsden - Ratiu reduction algorithm \cite{mr} that, in this case, consists of the following main steps:
\medskip

$\bullet$ find all Casimirs of the first Poisson bracket
$$
\{~, H_{b(x)}\}_1=0.
$$
They have the form $H_{b(x)}$ with $[\alpha, b(x)]\equiv 0$;
\medskip

$\bullet$ choose a suitable common level surface $S$ of the Casimirs;
\medskip

$\bullet$ consider the Lie subalgebra of those Casimirs that the Hamiltonian flows $\{~, H_{b(x)}\}_2$ are tangent to $S$. The quotient of $S$ over the action of these flows coincides with the Drinfeld - Sokolov reduced space. The generating functions of the commuting Hamiltonians are the Casimirs of the Poisson pencil
$$
\{~\,, ~\}_2 -\lambda \{~\,, ~\}_1
$$
restricted onto the reduced space. In a more recent paper \cite{ortenzi} this bihamiltonian approach has been also applied to the $G_2$ hierarchy.

\section{Formulation of Main Results}\label{principale}

As the first result of the present paper, we will identify the
dispersionless limit of the Drinfeld - Sokolov bihamiltonian structures with the canonical bihamiltonian structures
defined on the jet spaces of the Frobenius manifolds -- the
orbit spaces of the Weyl groups. To this end, we need first to establish an isomorphism
between the reduced manifolds ${\cal{M}}^I/{\hat N}$ that
underline the Drinfeld - Sokolov bihamiltonian structures and
the loop spaces of the orbit spaces of the Weyl groups.

As in section 4, let $X_i, Y_i, H_i,\
i=1,\dots,n$ be a set of Weyl generators of the simple Lie algebra
$\g$.

We specify the choice of the
complement of the subspace $[I,\n]$ of $\fb$ that appears in \eqref{sub-v}
so that
\beq
V=\oplus_{j=0}^{h-1}\, V_j,
\eeq
where the subspaces $V_j$ satisfy
\beq
V_j\in \fb_j=\fb\cap \g^j,\quad \fb_j=V_j\oplus [I,\fb_{j+1}].
\eeq
Note that $V_j$ is not a null space if and only if $j$ is one of the
exponents
$$
1=m_1\le m_2\le\dots\le m_n=h-1
$$
of the simple Lie algebra $\g$. For all simple Lie algebras except
the ones of $D_n$ type with even $n$ the exponents have multiplicity one, i.e.
$\dim V_{m_i}=1$ and the exponents are distinct.
For the $D_n$ (with even $n$) case, the exponents $m_i$ for $i\ne \frac{n}{2}, \frac{n}{2}+1$
have multiplicity one, $m_{\frac{n}2}=m_{\frac{n}2+1}=n-1$ and
$\dim V_{n-1}=2$.

To choose a system of local coordinates of the reduced manifold
${\cal{M}}^I/{\hat N}$ of \eqref{zh-2}, we fix a
canonical form
$$
\e\,\frac{d}{d x}+q^{\rm{can}}+I\in{\cal{M}}^I/{\hat N}
$$
of the linear operator $\e\,\frac{d}{d x}+q+I$ under the gauge action of
$\hat N$ such that
\beq\label{q-can}
q^{\rm{can}}=\sum_{i=1}^n
u^i\,\gamma_i\in V.
\eeq
Here for the exponent $m_i$ with multiplicity one, $\gamma_i$ is a basis of the
one-dimensional subspace $V_{m_i}$; for the $D_n$ case with even $n$, $\gamma_{\frac{n}2},
\gamma_{\frac{n}2+1}$ is a basis of the 2-dimensional subspace $V_{n-1}$. Then $u^1,\dots,
u^n$ form a coordinate system on the space $V\subset \mathfrak n$.

\begin{rem} The subspace $V_{h-1}=\fb_{h-1}$ is determined uniquely since
$$
\fb_j=0\quad \mbox{\rm for}\quad j\geq h.
$$
Recall \cite{bour1} that $\fb_{h-1}$ coincides with the (one-dimensional) centre of $\n$. We will choose the basic vector
$\gamma_n\in V_{h-1}$ as follows:
\beq\label{gamman}
\gamma_n=\alpha
\eeq
where the generator $\alpha$ of the centre of $\n$ has been chosen in \eqref{centr} (see also \eqref{zh-1}).
\end{rem}

According to the results of Section 4 there exists a gauge transformation reducing the linear operator $\e\,\frac{d}{dx} + q +I$ to the canonical form,
\beq\label{g1}
S^{-1}(x) \left( \e\,\frac{d}{dx} + q +I\right)\, S(x) = \e\,\frac{d}{d x}+q^{\rm{can}}+I
\eeq
where the function $S(x)$ takes values in the nilpotent group $N$. The canonical form $q^{\rm can}$ and the
reducing gauge transformation $S(x)$ are determined uniquely from the following recursion
procedure\footnote{Strictly speaking, the form we write \eqref{g1} and the recursion relation \eqref{recur1} uses a
matrix realization of the Lie algebra. See \cite{DS} for the formulation of the recursion procedure independent of the matrix realization.}
\beq\label{recur1}
[I, S_{i+1}] -q_i^{\rm can}= \sum_{j=1}^i S_j q_{i-j}^{\rm can} -q_i -\sum_{j=1}^i q_{i-j} S_j -\e\,\frac{d\,S_i}{dx}, \quad i\geq 0.
\eeq
Here we use decomposition
\beq\label{recur2}
S=1+S_1 +S_2 +\dots \in \hat{N}
\eeq
induced by the principal gradation of $\g$ since the exponential map
$$
\n \to N
$$
is a polynomial isomorphism. As it was proved in \cite{DS}, the reducing transformation and the canonical form
are uniquely determined from the recursion relation. Moreover, they are differential polynomials in $q$.
In particular, the defined above coordinates $u^1$, \dots, $u^n$ of $q^{\rm can}$  are certain differential polynomials
\beq\label{recur3}
u^i=u^i(q; q_x, \dots, q^{(h-1)}), \quad i=1, \dots, n.
\eeq

We will now use these differential polynomials for defining a polynomial isomorphism of affine algebraic varieties
\beq\label{affi1}
\h/W \to V.
\eeq
where $W=W_\g$ is the Weyl group of the simple Lie algebra $\g$.

Restricting the differential polynomials $u^i(q; q_x, q_{xx},\dots)$ to the Cartan
subalgebra
$$
q=\xi=\sum_{i=1}^n \xi^i\,\alpha_i^{\vee} \in C^{\infty}(S^1,\h)
$$
we obtain differential polynomials
\beq\label{miura-1}
 u^1(\xi;\xi_x,\xi_{xx},\dots),\dots,u^n(\xi;\xi_x,\xi_{xx},\dots).
\eeq
Define polynomial functions on $\h$ by
\beq\label{miura-2}
y^i(\xi)=u^i(\xi;0, 0,\dots)\in \mathbb{C}[\h^*].
\eeq

\begin{lem}
The functions  $y^i(\xi)$ are $W$-invariant homogeneous polynomials of degree $m_i+1$. Moreover, they generate
the ring of $W$-invariant polynomials $\mathbb{C}[\h^*]^{W}$.
\end{lem}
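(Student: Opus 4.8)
The plan is to strip the statement of all its $x$-dependence and reduce it to a purely Lie-algebraic assertion about the Kostant slice, and then read off invariance and the generation property from the Chevalley restriction theorem. First I would observe that setting all $x$-derivatives to zero in the recursion \eqref{recur1} annihilates the term $\e\,dS_i/dx$, so the derivative-free parts of $S$ and $q^{\mathrm{can}}$ satisfy the purely algebraic relations $[I,S_{i+1}]-q_i^{\mathrm{can}}=\sum_{j=1}^i S_j q_{i-j}^{\mathrm{can}}-q_i-\sum_{j=1}^i q_{i-j}S_j$ evaluated at the constant configuration $q=\xi\in\h$. These are exactly the graded components of $\Ad_{S^{-1}}(I+\xi)=I+q^{\mathrm{can}}$ with $S=1+S_1+\cdots\in N$ and $q^{\mathrm{can}}=\sum_i y^i(\xi)\,\gamma_i\in V$, whose unique solvability is the Proposition on the canonical form. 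Thus $y^i(\xi)$ are the coordinates of the unique $N$-conjugate of $I+\xi$ lying in $I+V$, and polynomiality of $y^i$ in $\xi$ is inherited from the fact, recalled after \eqref{recur3}, that the $u^i$ are differential polynomials in $q$.

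For the homogeneity, let $\rho^\vee$ be the grading element of the principal gradation \eqref{ts-1}, so that $\Ad_{\sigma_t}$ acts as $t^j$ on $\g^j$, where $\sigma_t=\exp((\log t)\rho^\vee)$. Applying $\Ad_{\sigma_t}$ to $\Ad_{S^{-1}}(I+\xi)=I+\sum_i y^i(\xi)\gamma_i$, using $I\in\g^{-1}$, $\gamma_i\in\g^{m_i}$, and $\xi\in\g^0$, and then multiplying the resulting identity in $\g$ by the scalar $t$, I obtain $\Ad_{\sigma_t S^{-1}\sigma_t^{-1}}(I+t\xi)=I+\sum_i t^{m_i+1}y^i(\xi)\gamma_i$. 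Since $\sigma_t$ normalizes $N$, the element $\sigma_t S^{-1}\sigma_t^{-1}$ again lies in $N$, so uniqueness of the canonical form for $I+t\xi$ forces $y^i(t\xi)=t^{m_i+1}y^i(\xi)$; hence $y^i$ is homogeneous of degree $m_i+1$.

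To obtain $W$-invariance and generation simultaneously I would bring in the invariants. Let $P_1,\dots,P_n$ be homogeneous generators of $\mathbb{C}[\g]^G$ with $\deg P_a=m_a+1$. From Step 1 the points $I+q^{\mathrm{can}}(\xi)$ and $I+\xi$ are $N$-conjugate, hence $G$-conjugate, so $P_a(I+q^{\mathrm{can}}(\xi))=P_a(I+\xi)$ for every $\xi$. On the other hand, for regular $\xi$ the operator $\ad_\xi$ is invertible on $\n^-$, and solving grade by grade produces $m\in\n^-$ with $\Ad_{\exp m}\xi=\xi+I$ (recall $I\in\n^-$); thus $P_a(I+\xi)=P_a(\xi)=:\chi_a(\xi)$, and by density this polynomial identity holds for all $\xi$. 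By the Chevalley restriction theorem the $\chi_a$ are homogeneous generators of $\mathbb{C}[\h^*]^W$ of degrees $m_a+1$. Writing $F_a(v):=P_a(I+v)$ for $v\in V$ and $\Phi(\xi):=\sum_i y^i(\xi)\gamma_i$, the two computations combine into $F_a\circ\Phi=\chi_a$ for $a=1,\dots,n$.

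The final point is that $F=(F_1,\dots,F_n)\colon V\to\mathbb{C}^n$ is a polynomial isomorphism, which is the substantive input and the step I expect to be the main obstacle. By the quasi-homogeneity of Step 2 each $F_a$ has weighted degree $m_a+1$ in the variables $y^i$ of weight $m_i+1$, so $F_a=\sum_{b:\,m_b=m_a}c_{ab}\,y^b+(\text{monomials in the }y^c\text{ with }m_c<m_a)$; the map $F$ is therefore triangular with respect to the ordering by degree, and it is invertible exactly when the diagonal blocks $(c_{ab})$ are nondegenerate. This nondegeneracy is precisely Kostant's theorem \cite{kostant} that the affine slice $I+V$ meets each regular orbit transversally in a single point, i.e.\ that $(P_1,\dots,P_n)|_{I+V}$ is an isomorphism onto $\mathbb{C}^n$; it is the one place where genuine structure theory, rather than bookkeeping with the principal gradation, is needed. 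Granting it, $F^{-1}$ is polynomial, so $y^i=(F^{-1}\circ\chi)_i$ is a polynomial in $\chi_1,\dots,\chi_n$; hence each $y^i$ lies in $\mathbb{C}[\h^*]^W$ and $\mathbb{C}[y^1,\dots,y^n]\subseteq\mathbb{C}[\chi_1,\dots,\chi_n]=\mathbb{C}[\h^*]^W$, while the reverse inclusion is immediate from $\chi_a=F_a(y)$. Therefore the $y^i$ are $W$-invariant homogeneous polynomials of degree $m_i+1$ generating $\mathbb{C}[\h^*]^W$, as claimed.
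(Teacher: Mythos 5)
Your proof is correct, and it rests on the same two pillars as the paper's: the Chevalley restriction theorem and Kostant's results on the slice $I+V$ (the paper cites Theorems 1.2 and 1.3 of \cite{kostant-2}, whose combination is precisely your statement that $v\mapsto\left(P_a(I+v)\right)_{a=1}^n$ is a polynomial isomorphism $V\to\mathbb{C}^n$). The execution differs in two places. For homogeneity the paper runs an induction through the graded recursion \eqref{ts-2}: at step $i$ it solves $\ad_I s_i+q^{\rm can}_{i-1}=F_i$, using that $\ad_I:\fb_i\to\fb_{i-1}$ is an isomorphism (resp.\ only injective) when $i-1$ is not (resp.\ is) an exponent, and tracks the polynomial degree in $\xi$ step by step; your scaling argument with $\sigma_t=\exp((\log t)\rho^\vee)$ reaches the same conclusion in one stroke from uniqueness of the canonical form, and it handles the multiplicity-two case of $D_n$ with $n$ even without any extra discussion. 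For generation the paper simply chains the ring isomorphisms $\mathbb{C}[V]\cong S(\fb)^N\cong S(\g)^G\cong\mathbb{C}[\h^*]^W$ and asserts that the composite carries the coordinates $u^i$ to the $y^i$; this tacitly uses that $\xi$ and $I+\xi$ have the same $G$-invariants, which is exactly the identity $P_a(I+\xi)=P_a(\xi)$ that you prove explicitly by the grade-by-grade $\n^-$-conjugation for regular $\xi$ --- so your write-up makes explicit a point the paper leaves to the reader, at the cost of carrying the generators $P_a$ around and inverting the triangular map $F$ by hand. The only quibble is bibliographic: the slice isomorphism you invoke lives in Kostant's Whittaker-vectors paper (\cite{kostant-2}, as the paper cites), not in the 1959 paper \cite{kostant} on the principal three-dimensional subgroup, which supplies only the auxiliary facts about $\ad_I$ and the centralizer of $I_+$.
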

\begin{proof} The restriction
$$
F(q; q_x, q_{xx}, \dots) \mapsto F(q; 0, 0, \dots)=:f(q)
$$
of any gauge invariant polynomial function on the differential operators
of the form
$$
\e\,\frac{d}{dx} + q
$$
yields a  polynomial function on $\g$ invariant wrt adjoint action of the Lie group $G$. Further restriction onto the Cartan subalgebra establishes an isomorphism
$$
S(\g)^G \to S(\h)^W = \mathbb C[\h^*]^W
$$
of the ring of ${\rm Ad}$-invariant polynomial functions on $\g$ and the ring of $W$-invariant polynomial functions on $\h$,
according to Chevalley theorem \cite{bour1}. Furthermore, the homomorphism
$$
S(\g)^G \to S(\fb)^N
$$
defined by the formula
$$
f \mapsto f(I + q), \quad q\in \fb
$$
is an isomorphism (see \cite{kostant-2}, Theorem 1.3). Finally, according to Theorem 1.2 of \cite{kostant-2} the adjoint action of the nilpotent group  establishes an isomorphism of affine varieties
$$
N\times (I+V) \to I+\fb.
$$
Combining these statements we prove that the polynomials
$y^1(\xi)$, \dots, $y^n(\xi)$ generate the ring $\mathbb C[\h^*]^W$.

%We first prove that $y^i(\xi)$ are $W$-invariant polynomials. %For this we only
%need to prove the invariance of these functions with respect %to the generating
%reflections $\sigma_i\in W$ corresponding to the simple roots %$\al_i$
%\beq
%\sigma_i (\xi)=\xi-\la \al_i,\xi\ra\,\al_i^\vee,\quad i=1,\dots,n.
%\eeq
%Here we identify $\h$ with $\h^*$ by using the fixed invariant %bilinear
%form $\la\, ,\,\ra_{\g}$.
%It is easy to verify that
%\beq
%e^{\ad_X} \left(I+\xi\right)=I+\sigma_i (\xi),
%\eeq
%where
%\beq
%X=-\la\al_i,\xi\ra\,\,X_i\in\n\,.
%\eeq
%Since the functions $u^i(\xi; \xi_x, \dots)$ are gauge invariant %differential polynomials,
%by the above construction we know that $y^i(\xi)$ are
%invariant
%under the transformation
%\beq
%\xi\mapsto \tilde\xi
%\eeq
%with $\tilde\xi$ defined by the relation $e^{\ad_X} \left(I+\xi%\right)=I+\tilde\xi$.
%From this it follows that $y^i(\xi)$ are invariant with respect to the reflection $\sigma_i$,
%and consequently, they are $W$-invariant.

Now let us prove that $\deg y^i(\xi)=m_i+1$. From the above definition, we know that
these functions are determined by the following equation obtained from \eqref{g1} by eliminating $d/dx$
\beq\label{ts-2}
e^{\ad_s} (q+I) =q^{\rm{can}}+I,
\eeq
where $s\in\n, q\in \fb, q^{\rm{can}}\in V$ have the decomposition
\beq
s=\sum_{k=1}^{h-1} s_i,\ q=\sum_{k=0}^{h-1} q_k,\ q^{\rm{can}}=\sum_{i=1}^{h-1} q^{\rm{can}}_i
\eeq
with $s_k, q_k\in \fb_k, q^{\rm{can}}_i\in V_i$. Comparing the degree 0 parts of the left and right hand sides of \eqref{ts-2},
we arrive at
\beq
\ad_I s_1=q_0.
\eeq
Since the map $\ad_I:\, \fb_1\to \fb_0$ is an isomorphism, we have a unique $s_1$ satisfying
the above equation. Restricting to $q_0=\xi$ we see that $s_1$ depends linearly on $\xi$. Continuing this procedure by comparing
the degree 1, degree 2 etc. parts, at the i-th step we arrive at the equation
of the form
\beq\label{kost1}
\ad_I s_i+q^{\rm{can}}_{i-1} =F_i
\eeq
where $F_i\in\fb_{i-1}$ is a homogeneous polynomial  in $\xi$ of degree $i$.
If $i-1$ is not an exponent, then the above equation has a unique solution with
$q^{\rm{can}}_{i-1}=0$ since the map
\beq\label{kost}
{\rm ad}_I : \fb_i \to \fb_{i-1}
\eeq
is an isomorphism \cite{kostant}.
So $s_i$ will  be a homogeneous polynomial in $\xi$ of degree $i$. In the case when $i-1=m_k$ is an exponent the map \eqref{kost} is only injective. So the solution $s_i\in \fb_i$, $q_{i-1}^{\rm can}\in V_{i-1}$  of the above equation \eqref{kost1} exists and is determined uniquely.  The degree of homogeneous polynomials $s_i(\xi)$ and $q_{i-1}^{\rm can}(\xi)$ is equal to
$$
\deg s_i(\xi)=\deg q_{i-1}^{\rm can} (\xi)= \deg F_i(\xi)=i = m_k+1.
$$
Thus
the function $y^{k}(\xi)$ (or $y^{k}(\xi), y^{k+1}(\xi)$) when $m_k$ has multiplicity one
(resp. has multiplicity two)  is a homogeneous polynomial of degree $m_k+1$. In this way we prove
that $\deg y^i(\xi)=m_i+1$ for any $i=1, \dots, n$. \end{proof}

{}
We obtained an isomorphism of rings
$$
\mathbb C[V^*] \to \mathbb C[\h^*]^W.
$$
Dualizing we obtain the isomorphism \eqref{affi1} of affine algebraic varieties. This induces the isomorphism
\beq\label{iso-main}
\left\{ \begin{array}{l}\mbox{gauge invariant differential}\\ \mbox{polynomials}~ f(q; q_x, q_{xx}, \dots)\\
\mbox{on the space of differential}\\
\mbox{operators} ~ \e\,\frac{d}{dx} + q +I, ~ q(x)\in\fb\end{array}
\right\}\to \left\{ \begin{array}{l}\mbox{differential polynomials}\\ \mbox{on the affine algebraic}\\ \mbox{variety}\quad \h /W\end{array}
\right\}
\eeq
Recall \cite{coxeter} that the orbit space $M_\g=\h/W$ carries a natural structure
of a polynomial Frobenius manifold. According to \eqref{iso-main} the Hamiltonians of
Drinfeld - Sokolov hierarchy can be realized as polynomial functions, considered modulo
total $x$-derivatives,  on the jet space of the Frobenius manifold. We want to compute the
Drinfeld - Sokolov bihamiltonian structure in terms of $M_\g$.

\begin{theorem}\label{thm-4-1}
Under the isomorphism \eqref{iso-main}, the Drinfeld - Sokolov bihamiltonian structure
associated to an untwisted affine Lie algebra $\hat \g$ is realized
as a bihamiltonian structure on the jet space of $M_\g$. Its dispersionless
limit coincides with the bihamiltonian structure of hydrodynamic type naturally
defined on the jet space of the Frobenius manifold by its flat pencil of metrics.
\end{theorem}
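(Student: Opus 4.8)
The plan is to treat the two assertions separately: the statement that the pencil becomes a bihamiltonian structure on the jet space of $M_\g$ is essentially formal, whereas the identification of its dispersionless limit requires pinning down two metrics. For the first assertion I would argue as follows. By \eqref{recur3} the reduced coordinates $u^1,\dots,u^n$ are differential polynomials in $q$, and the ring isomorphism \eqref{iso-main} identifies gauge-invariant differential polynomials on the space of operators $\e\,\frac{d}{dx}+q+I$ with differential polynomials on the jet space of $M_\g=\h/W$. Since the Poisson pencil \eqref{dsbh} descends from $\mathcal{M}$ to the reduced space, writing it in the coordinates $u^i$ expresses it as a pencil of the form \eqref{bhs0}. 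The only point requiring care is that the $\e^{-1}$ terms flagged after \eqref{pb-dual10} cancel on the reduced space; I would deduce this from the fact that the ultralocal ($\e^{-1}$) part of each bracket is precisely the Hamiltonian generator of the gauge action, so it annihilates gauge-invariant functionals after restriction to $\mathcal{M}^I$, leaving a bracket whose $\e$-expansion begins at order $\e^0$. This makes the reduced pencil a genuine bihamiltonian structure of the form \eqref{bhs0}, \eqref{ldt}.

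The substance is the second assertion, and I would first reduce the first metric to the second by an exact observation. Since $\al=\gamma_n$ generates the centre of $\n$ (see \eqref{centr}, \eqref{gamman}), one has $[s,\al]=0$ for every $s\in\n$, so the reducing gauge transformation $e^{\ad_s}$ intertwines the translation $q\mapsto q-\lm\al$ of \eqref{pencil} with the shift $q^{\rm can}\mapsto q^{\rm can}-\lm\al$; by \eqref{q-can} this is precisely $u^n\mapsto u^n-\lm$ on the reduced space, all other $u^i$ being fixed. Hence the whole pencil \eqref{dsbh} is obtained from the second bracket by the substitution $u^n\mapsto u^n-\lm$, so that $\{~,~\}_1=\partial_{u^n}\{~,~\}_2$ and, in the dispersionless limit, $g_1^{ij}=\partial_{u^n}g_2^{ij}$ together with the same relation for the Christoffel coefficients; that is, the first metric is $\mathcal{L}_{\partial_{u^n}}$ of the second. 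It therefore suffices to show that the dispersionless limit of $\{~,~\}_2$ is the intersection form of $M_\g$ and that $\partial_{u^n}$ is the unit vector field $e$.

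For the latter point I would invoke the degree count of the Lemma: $\deg y^n=m_n+1=h$ is the highest, so (recalling $u^n$ restricts to $y^n$ on $M_\g$) $u^n$ is the coordinate of top weight. In Dubrovin's construction \cite{coxeter} the Euler field is, up to normalization, $E=\sum_i(m_i+1)y^i\partial_{y^i}$, and the unit $e$, being the weight $-1$ field, equals $\partial_{y^n}=\partial_{u^n}$; the freedom to rescale $\al=\gamma_n$ so that this identification is exact is an admissible triangular transformation \eqref{cambio1} that does not alter the Poisson pencil. Given this, the defining relations \eqref{fmt}--\eqref{fro} say exactly that $(~,~)_2-\lm\,(~,~)_1$ is flat with linearly-varying Christoffels and that the shift $u^n\mapsto u^n-\lm$ in the unit direction carries the intersection form $(~,~)_2$ to $(~,~)_2-\lm\,(~,~)_1$, so that the Saito metric is $(~,~)_1=\mathcal{L}_e(~,~)_2$. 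Matching this with the pencil $g_2-\lm\,g_1$ of the previous paragraph closes the argument, provided $g_2$ is the intersection form.

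The hard part is thus the identification of the dispersionless limit of the second Drinfeld--Sokolov bracket with the intersection form of $\h/W$. Here I would compute the leading term \eqref{ldt} of $\{u^i(x),u^j(y)\}_2$ by restricting to $x$-independent configurations and, using the Kostant slice \eqref{sub-v} together with the identification of gauge invariants with $W$-invariants from the Lemma, reduce it to the metric induced on $\h/W$ by the invariant form $\la~,~\ra_\g|_\h$; this descended metric is by definition the intersection form and coincides with Dubrovin's second metric on the orbit space \cite{coxeter}. Controlling the gauge-fixing at leading order so that this reduction is exact, rather than merely up to a Miura-type transformation, is the main technical obstacle, and I expect it to be handled through the explicit pseudo-differential realizations of $\{~,~\}_2$ for the classical series together with the orbit-space computations of \cite{coxeter} in general.
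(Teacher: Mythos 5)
Your handling of the first assertion and of the relation between the two brackets is sound and matches the paper: since $\alpha=\gamma_n$ is central in $\n$, the same gauge transformation $S(x)$ reduces $\e\,\frac{d}{dx}+q-\lm\alpha+I$ and $\e\,\frac{d}{dx}+q+I$, so the whole pencil is the second bracket with $u^n\mapsto u^n-\lm$, and hence $g_1^{ij}=\p_{u^n}g_2^{ij}$; this is exactly the paper's argument, and it matches Saito's definition \eqref{metr1} of the first metric as the derivative of the second along the top-degree invariant. The gap is in what you yourself call ``the hard part'': the identification of the dispersionless limit of $\{~\,,~\}_2$ with the intersection form \eqref{metr2}. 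You propose to get it by restricting to $x$-independent configurations and ``controlling the gauge-fixing at leading order,'' admit that this control is the main technical obstacle, and defer it to explicit pseudo-differential computations for the classical series --- which is not a proof, and in any case would not cover the exceptional algebras uniformly.

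The missing idea, which is the crux of the paper's proof, is a theorem already available in \cite{DS}: the Miura map
$\mu:\ (\xi^1,\dots,\xi^n)\mapsto \bigl(u^1(\xi;\xi_x,\dots),\dots,u^n(\xi;\xi_x,\dots)\bigr)$,
i.e.\ the gauge-fixing differential polynomials \eqref{miura-1} restricted to Cartan-valued loops, is a \emph{Poisson map} from $C^{\infty}(S^1,\h)$ equipped with the constant bracket \eqref{constant-PB}, $\{\xi^i(x),\xi^j(y)\}=-G^{ij}\delta'(x-y)$, to the reduced space equipped with the \emph{second} Drinfeld--Sokolov bracket. Granting this, no control of the gauge fixing is needed: the pushforward of the constant bracket under $\mu$ has dispersionless limit governed by the tensor law with Jacobian $\p y^i/\p\xi^a$ (the jets of $\xi$ drop out at leading order), which is precisely $-\,g_2^{ij}(y)$ of \eqref{metr2}; combined with your (correct) shift argument this closes the proof for all types at once. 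Separately, your justification for the cancellation of the $\e^{-1}$ terms is shaky: the ultralocal part of \eqref{pb-dual10} is the Lie--Poisson structure of the whole loop algebra, not merely the generator of the $\hat N$-action, so it does not simply ``annihilate gauge-invariant functionals''; the polynomiality in $\e$ of the reduced brackets is again a statement one should take from \cite{DS} rather than from this heuristic.
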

\begin{proof}
Let us first remind the construction of the flat pencil of metrics on the orbit space
$M_\g$.
Actually, the construction works uniformly for the orbit space of an arbitrary finite Coxeter
group $W$ (in our case $W=W_\g$). For the chosen basis of simple roots $\alpha_1$, \dots, $\alpha_n\in \h^*$ denote
$$
G_{ab} =\la \alpha_a^\vee, \alpha_b^\vee\ra_\g, \quad a, \, b=1, \dots, n
$$
the Gram matrix of the invariant bilinear form. Let
\beq\label{def-gram}
\left(G^{ab}\right)=\left( G_{ab}\right)^{-1}
\eeq
be the inverse matrix. It gives a (constant) bilinear form on the
cotangent bundle $T^* \h$. The projection of the bilinear form
onto the quotient $\h/W$ defines a bilinear form on $T^*M_\g$
non-degenerate outside the locus $\Delta\subset M_\g$ of singular
orbits (the so-called {\it discriminant} of the Coxeter group $W$).
In order to represent this form in the coordinates let us choose
the above constructed system of
$W$-invariant homogeneous polynomials $y^1(\xi)$, \dots, $y^n(\xi)$
generating the ring $\mathbb{C}[\h^*]^W$. Here $\xi=\xi^a \alpha_a\in
\h$. The polynomial function
$$
G^{ab} \frac{\p y^i(\xi)}{\p \xi^a}\, \frac{\p y^j(\xi)}{\p \xi^b}
$$
is $W$-invariant for every $i$, $j=1, \dots, n$ and, thus, is a
polynomial in $y^1$, \dots, $y^n$. Denote $g^{ij}_2(y)$ these polynomials,
\begin{equation}\label{metr2}
g_2^{ij}\left(y(\xi)\right) = G^{ab} \frac{\p y^i(\xi)}{\p \xi^a}\, \frac{\p y^j(\xi)}{\p \xi^b}.
\end{equation}
This gives the Gram matrix of the {\it second} metric on $T^*M_\g$ in the coordinates $y^1$, \dots, $y^n$.
The associated contravariant Christoffel coefficients are polynomials ${\Gamma^{ij}_k}_2(y)$ defined from the equations
\begin{equation}\label{christ2}
{\Gamma^{ij}_k}_2(y) dy^k = \frac{\p y^i}{\p \xi^a}\,G^{ab}
\frac{\p^2 y^j}{\p \xi^b \p \xi^c} d\xi^c.
\end{equation}
To define the first metric, following \cite{S1, S2}, let us assume
that the invariant polynomial $y^1(\xi)$ has the maximal degree
$$
\deg y^1(\xi)=h.
$$
Here $h$ is the Coxeter number of the Lie algebra $\g$.
Put
\begin{equation}\label{metr1}
g^{ij}_1(y):=\frac{\p g^{ij}_2(y)}{\p y^1}, \quad {\Gamma^{ij}_k}_1(y): = \frac{\p {\Gamma^{ij}_k}_2(y)}{\p y^1}.
\end{equation}
This is the first metric and the associated contravariant Christoffels of the flat pencil of
metrics \eqref{fmt}, \eqref{fro} for the Frobenius structure on $M_\g$.
The second metric of the pencil
depends only on the normalization of
the invariant bilinear form.
The first metric depends on the choice of the invariant polynomial $y^1(x)$ of the maximal degree.
Changing this polynomial yields a rescaling of the first metric; the Frobenius structure will also be
rescaled. This rescaling, however, does not change the central invariants (see the end of Section \ref{sec-miura}).

Let us also remind the algorithm of \cite{coxeter} of reconstruction of the Frobenius structure on the orbit space\footnote{This construction was extended in \cite{affine, DZZ} to the orbit spaces of certain extensions of affine Weyl groups,
and in \cite{bertola} to the orbit spaces of some Jacobi groups. More recently I.Satake \cite{satake} extended this
construction to the orbit spaces of the reflection groups for elliptic root systems for the so-called case of codimension one.}.
Let $v^1(\xi)$, \dots, $v^n(\xi)$ be a system of {\it flat generators} of the ring of $W$-invariant polynomials in the sense of \cite{S1, S2}. Geometrically they give a system of flat coordinates
for the first metric:
$$
\eta^{ij}:=(dv^i, dv^j)_1 =\mbox{const}.
$$
Put
$$
g^{ij}(v):= (dv^i, dv^j)_2.
$$
Then there exists an element $F(v)$ of the degree $2h+2$ in the ring of $W$-invariant polynomials such that
\begin{equation}\label{old}
\eta^{ik}\eta^{jl} \frac{\p^2 F(v)}{\p v^k \p v^l} =\frac{h}{\deg v^i + \deg v^j -2}\, g^{ij}(v).
\end{equation}
The third derivatives
$$
c_{ij}^k(v) := \eta^{kl}\frac{\p^3 F(v)}{\p v^l \p v^i \p v^j}
$$
are the structure constants of the multiplication on the tangent space $T_vM$.

Define a Poisson bracket for two functionals $\varphi$, $\psi$ on $C^{\infty}(S^1,\h)$ by the formula
\begin{equation}
\{\varphi,\psi\}[\xi]=\int_{S^1} \la\frac{d}{d x}
\rm{grad}_{\xi(x)}\varphi,\rm{grad}_{\xi(x)} \psi\ra_{\g}\, dx
\end{equation}
In terms of the coordinates $\xi^1(x),\dots, \xi^n(x)$, we have
\begin{equation}\label{constant-PB}
\{\xi^i(x),\xi^n(y)\}=-G^{ij}\, \delta'(x-y),\quad i,j=1,\dots,n,
\end{equation}
where $(G^{ij})$ is defined in (\ref{def-gram}).
Then as it is shown in \cite{DS}, the Miura map
$$
\mu:\ (\xi^1,\dots,\xi^n)\mapsto \left(u^1(\xi;\xi_x,\xi_{xx},\dots),\dots,u^n(\xi;\xi_x,\xi_{xx},\dots)\right)
$$
is a Poisson
map between $C^{\infty}(S^1,\h)$ and ${\cal{M}}^I/{\hat N}$ if the
latter is endowed with the second Poisson bracket of the
Drinfeld - Sokolov bihamiltonian structure \eqref{dsbh}.

{}From the above argument and (\ref{constant-PB}), we see that the
second metric (\ref{metr2}) defined on the orbit space of $W_g$
coincides, up to a minus sign, with the metric defined on
$(I+\fb)/N$ by the leading terms of the second Poisson bracket of the
Drinfeld - Sokolov bihamiltonian structure associated with the untwisted affine Lie algebra
$\hat\g$.

The definition of the first Drinfeld - Sokolov Poisson bracket depends on the choice of the base element $\alpha$ of
the one-dimensional center of the nilpotent subalgebra $\n$ of $\g$, see \eqref{pencil}, \eqref{centr}.
We note that $\g^{m_n}=\g^{h-1}$ is just the center of $\n$, so we can take $\gamma_n=\alpha$ in (\ref{q-can}).
Then in terms of the local coordinates $u^1(x),\dots, u^n(x)$ the first Drinfeld - Sokolov Poisson bracket
is obtained from the second one by the shifting
$$
u^n(x)\mapsto u^n(x)-\lm,\quad  \p^k_x u^n(x)\mapsto \p^k_x u^n(x),\quad k\ge 1,
$$
and
$$
\{~\,, ~\}_2\mapsto \{~\,, ~\}_2-\lm \{~\,, ~\}_1.
$$

Thus from the above results it follows the validity of Theorem \ref{mainconj}.
\end{proof}

{{\begin{rem} Relationship of the generalized Drinfeld - Sokolov hierarchies with {\rm algebraic} Frobenius manifolds
is currently under investigation; first results have been obtained in \cite{pavlyk, dinar}.
\end{rem}
}}

%%%%%%%%%%%%%%%%%%%%%%%%%end of change %%%%%%%%%%%%%%%%%%%%%%%%

\begin{theorem}\label{mainconj}
The suitably ordered central invariants of the Drinfeld - Sokolov
bihamiltonian structure for an untwisted affine Lie algebra $\hat
\g$ are given by the formula
\begin{equation}\label{otvet1}
c_i =\frac1{48} \la \alpha_i^\vee, \alpha_i^\vee\ra_\g, \quad i=1,
\dots, n,
\end{equation}
where $\alpha_i^\vee\in \h$ are the coroots of the simple Lie algebra $\g$.
\end{theorem}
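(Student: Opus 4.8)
The plan is to compute the central invariants directly from the defining formula \eqref{fc}, feeding into it the dispersive structure of the reduced Drinfeld--Sokolov bracket $\{~\,,~\}_\lm=\{~\,,~\}_2-\lm\,\{~\,,~\}_1$. By Theorem \ref{thm-4-1} the dispersionless limit is already identified with the flat pencil of the Frobenius manifold $M_\g=\h/W$; in particular the diagonalizing functions $f^i(\lm)$ and the canonical coordinates $\lm^1,\dots,\lm^n$ are those of this flat pencil, so the first, purely differential-geometric, ingredient of \eqref{fc} is known. What remains is to extract the two leading dispersive tensors $A^{ij}_{1,0;a}$ and $A^{ij}_{2,0;a}$ (that is, $P^{ij}_a$ and $Q^{ij}_a$ in canonical coordinates) and substitute them into \eqref{fc}. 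The key structural observation is that the deformation parameter $\epsilon$ of the abstract problem \eqref{bhs0} is exactly the parameter $\epsilon$ appearing in the operator $\e\,\frac{d}{dx}+q^{\rm can}+I-\lambda\alpha$, so the $\epsilon$-expansion of the reduced bracket provides precisely the tensors required, with no further deformation-theoretic input.

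First I would establish that every $c_i$ is an honest constant, not merely a function of $\lm^i$. This follows from the homogeneity built into the construction: the principal gradation of $\g$ equips the reduced bracket with a scaling symmetry under which the canonical coordinates carry definite weights and each $c_i(\lm^i)$ acquires a fixed weight; combined with the rescaling law \eqref{law} and the fact that $c_i$ depends only on $\lm^i$ (Theorem \ref{thm51}(i)), this forces $c_i$ to be constant. Since by the same theorem the central invariants are defined only up to a common factor, and since rescaling the first bracket alone leaves them unchanged, I am free to fix the normalization of $\la~\,,~\ra_\g$ once and then simply read off the constants produced by the computation.

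The computation itself proceeds case by case, using the explicit realizations recalled from \cite{DS}. For $A_n$ the reduced bracket is the pair of Gelfand--Dickey brackets on scalar differential operators $L=\e^{n+1}\p_x^{n+1}+\dots$; the canonical coordinates are the critical values of the associated Landau--Ginzburg superpotential (the symbol of \eqref{zh-1}), and expanding the Adler--Gelfand--Dickey bracket to order $\epsilon^2$ in these coordinates yields the common value $c_i=\tfrac1{24}$, matching $\tfrac1{48}\la\alpha_i^\vee,\alpha_i^\vee\ra_\g=\tfrac1{24}$ in the simply-laced normalization. For $B_n,C_n,D_n$ (Section \ref{bcd-n}) I would use the matrix and pseudo-differential presentations in which the differing coroot lengths enter explicitly through the reduced operator; for the exceptional algebras (Section \ref{except}) the finite Lie-theoretic data tabulated in the Appendices is substituted directly into \eqref{fc}.

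I expect the main obstacle to be the explicit passage to canonical coordinates together with the control of the off-diagonal sum $\sum_{k\ne i}(P^{ki}_2-\lm^i P^{ki}_1)^2/(f^k(\lm^k-\lm^i))$ in \eqref{fc}: this term couples all the reduced fields $u^1,\dots,u^n$ and, after the nonpolynomial change to the roots $\lm^k$ of $\det(g^{ij}_2-\lm\,g^{ij}_1)=0$, must be shown to combine with the diagonal term $Q^{ii}_2-\lm^iQ^{ii}_1$ into the single coroot length $\la\alpha_i^\vee,\alpha_i^\vee\ra_\g$. The non-simply-laced cases and the $D_n$ (even $n$) case, where an exponent has multiplicity two and $\dim V_{n-1}=2$ so that the canonical coordinates coalesce, are the most delicate; there I would exploit the folding relations between the relevant root systems to reduce the central invariants, wherever possible, to values already obtained in the simply-laced computation.
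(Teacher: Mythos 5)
Your plan coincides with the paper's actual proof for the classical series and for $G_2$, $F_4$: in Sections \ref{a-n} and \ref{bcd-n} the central invariants are indeed obtained by expanding the scalar Lax (pseudo-)differential operator brackets in the symbol calculus, taking the critical values of the symbol as canonical coordinates, and substituting into \eqref{fc} (the off-diagonal sum you single out is handled there by the residue theorem); in Section \ref{except} the $G_2$ and $F_4$ cases are likewise done by direct substitution of the Dirac-reduction data into \eqref{fc}. The genuine gap is in your treatment of $E_6$, $E_7$, $E_8$. Substituting the Appendix data "directly into \eqref{fc}" is exactly what the paper declines to do: the coefficients $A^{ij}_{2,0;a}$ produced by the Dirac reduction \eqref{ndual9}--\eqref{ndual11} are already too long to display for $E_6$, and the subsequent diagonalization required by \eqref{otvet} was judged prohibitively involved even with computer algebra. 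The paper's proof for the $E$ series rests on a different idea that your proposal lacks: one first shows, from the explicit formulae of \cite{DZ1} valid for an arbitrary semisimple Frobenius manifold, that the bihamiltonian structure of \cite{DZ1} has all central invariants equal to $\frac1{24}$ (here $P^{ki}_a=0$ and the $Q^{ii}_a$ are expressed through the rotation coefficients $\gamma_{ij}$ and $\psi_{i1}$); one then exhibits an explicit Miura-type transformation (the tensor $K$ tabulated in the Appendices) matching the coefficients of $\e^2\delta'''(x-y)$ of the reduced Drinfeld--Sokolov structure with those of the \cite{DZ1} structure built from the polynomial potentials of \cite{zuber}. Since for these cases the central invariants are determined by precisely those coefficients, the value $\frac1{24}$ follows without ever passing to canonical coordinates for the $E$-type pencils.

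A second problem is your fallback for the non-simply-laced and $D_n$ (even $n$) cases: invoking the foldings $D_{n+1}\to B_n$, $A_{2n+1}\to C_n$, $E_6\to F_4$, $D_4\to G_2$ to import simply-laced values is circular within this paper, because the compatibility of folding with Dirac reduction of bihamiltonian structures and the additivity $\tilde c=c_i+c_j$ of central invariants is presented there only as an a posteriori ``experimental'' observation whose proof is explicitly deferred to subsequent work; it cannot serve as an ingredient of the present proof. It is also unnecessary: the paper computes $B_n$, $C_n$, $D_n$ (all $n$, including even) directly from the Lax operators \eqref{b-lax}--\eqref{d-lax}, the single anomalous value $c_n$ for $B_n$, $C_n$ coming from the distinguished critical point $r_n=0$ of $\Lambda(p^2)$. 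Note also that your description of the $D_n$ (even $n$) difficulty is inaccurate: the coincidence $m_{n/2}=m_{n/2+1}=n-1$ means $\dim V_{n-1}=2$, i.e.\ two coordinate fields of equal degree; it does not make the canonical coordinates (the roots of $\det(g_2^{ij}-\lm\,g_1^{ij})=0$) coalesce --- semisimplicity requires them to remain generically distinct, and the paper's $D_n$ computation treats all $n$ critical values of $\tLM(P)$ on the same footing. Finally, your scaling argument for constancy of the $c_i$ is plausible but would need careful justification of how $\e$ and $x$ rescale; the paper does not need it, since constancy falls out of the explicit computations.
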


In the formula \eqref{otvet1} we use the same invariant bilinear form
as the one used in the definition of the Kac - Moody Lie algebra in Section \ref{sec-3} .

Let us now comment the statement of Theorem \ref{mainconj} regarding the central invariants of the
Drinfeld - Sokolov bihamiltonian structures.
Let us fix on $\g$
the so-called {\it normalized} invariant bilinear form (see \cite{kac}, \S 6.2 and Exercise 6.2)
\begin{equation}\label{normalized}
\la a, b\ra_\g:= \frac1{2 h^\vee} \tr (\ad a \cdot \ad b).
\end{equation}
Here $h^\vee$ is the dual Coxeter number.
With the help of the table in \S\,6.7 of \cite{kac}  one obtains the following
values of central invariants, according to Theorem \ref{mainconj}:
$$
\begin{array}{lccccc}
\g & & c_1 & \dots & c_{n-1} & c_n \\
 & & & & & \\
 & & & & & \\
A_n & & \frac1{24} & \dots & \frac1{24} & \frac1{24} \\
 & & & & & \\
 B_n &  &\frac1{24} & \dots & \frac1{24} & \frac1{12} \\
 & & & & & \\
 C_n & & \frac1{12} & \dots & \frac1{12} & \frac1{24} \\
 & &  & & & \\
 D_n & & \frac1{24} & \dots & \frac1{24} & \frac1{24} \\
 & & & & & \\
E_n, ~n=6, \, 7,\, 8 &  & \frac1{24} & \dots & \frac1{24} & \frac1{24} \\
 & & & & & \\
F_n, ~ n=4 & & \frac1{24} & \frac1{24} & \frac1{12} & \frac1{12} \\
 & & & & & \\
 G_n, ~ n=2 & & \frac1{8} & & & \frac1{24}
\end{array}
$$

%\begin{theorem}\label{thm-4-2}
%The conjecture valids for the
%cases $A_n, B_n, C_n, D_n, G_2$.
%\end{theorem}

The ``breaking of symmetry" between the central invariants
for the non-simply laced Lie algebras has the following ``experimental" explanation. Recall that the central invariants
\eqref{fc} are in one-to-one correspondence with the canonical coordinates on the Frobenius manifold, i.e., with the roots $\lambda_1, \dots, \lambda_n$ of the characteristic equation
\beq\label{ura1}
\det \left( g_2^{ij}(u) -\lambda\, g_1^{ij}(u)\right)=0.
\eeq
It turns out that the characteristic polynomial factorizes in the product of two factors of the degrees $p$ and $q$, $p+q=n$, where $p$ is the
number of long simple roots and $q$ is the number of short simple roots.
Such a splitting defines a partition of the set of central invariants in two subsets; the central invariants inside each of the subsets have the same value. For simply laced root systems the characteristic polynomial is irreducible. Recall that
the map associating with the point $u$ the collection of the coefficients of the characteristic
polynomial \eqref{ura1} for the case of simply laced root systems coincides with
the {\it Lyashko - Looijenga map} \cite{looijenga, lyashko}, see also \cite{hertling}.

The values of the central invariants associated to non-simply
laced Lie algebras can be obtained by means of the following ``folding prescriptions":
\begin{align*}
&B_n: \left(\frac1{24},\cdots, \frac1{24},
\frac1{12}\right)=\left(\frac1{24},\cdots, \frac1{24},
\frac1{24}+\frac1{24}\right),\\
&C_n: \left(\frac1{12},\cdots, \frac1{12},
\frac1{24}\right)=\left(\frac1{24}+\frac1{24},\cdots,
\frac1{24}+\frac1{24},
\frac1{24}\right),\\
&F_4: \left(\frac1{12}, \frac1{12}, \frac1{24},
\frac1{24}\right)=\left(\frac1{24}+\frac1{24},
\frac1{24}+\frac1{24},
\frac1{24}, \frac1{24}\right),\\
&G_2: \left(\frac1{8},
\frac1{24}\right)=\left(\frac1{24}+\frac1{12}, \frac1{24}\right)=
\left(\frac1{24}+\frac1{24}+\frac1{24}, \frac1{24}\right).
\end{align*}
These prescriptions correspond to the ``folding of Dynkin diagrams'' procedure known in the theory of
simple Lie algebras and singularity theory
\[D_{n+1} \to B_n,\ A_{2n+1} \to C_n,\ E_6 \to F_4,\ D_4 \to B_3 \to G_2.\]
Note that the relationships between the Frobenius manifolds associated with simply laced and non-simply laced Coxeter groups established by the folding procedure has been clarified in \cite{folding}.

Let us take $B_3\to G_2$ as an example to illustrate this
relation. Their Dynkin diagrams are
\begin{center}
\setlength{\unitlength}{1mm}
\begin{picture}(80,15)
\put(1,5){\circle*{2}} \put(16,5){\circle*{2}}
\put(31,5){\circle*{2}} \put(0,0){$1$} \put(15,0){$2$}
\put(30,0){$3$} \put(1,5){\line(1,0){15}} \put(16,6){\line(1,0){15}}
\put(16,4){\line(1,0){15}} \put(27,5){\line(-3,1){5}}
\put(27,5){\line(-3,-1){5}} \thicklines
\qbezier(2,7)(16,17)(30,7)\thinlines \put(2,7){\line(1,1){4}}
\put(2,7){\line(4,1){5.488}} \put(30,7){\line(-1,1){4}}
\put(30,7){\line(-4,1){5.488}} \thicklines
\put(40,6){\vector(1,0){10}} \thinlines \put(61,5){\circle*{2}}
\put(76,5){\circle*{2}} \put(60,0){$1$} \put(75,0){$2$}
\put(61,4){\line(1,0){15}} \put(61,5){\line(1,0){15}}
\put(61,6){\line(1,0){15}} \put(66,5){\line(3,1){5}}
\put(66,5){\line(3,-1){5}}
\end{picture}
\end{center}
The folding relation means that the simple Lie algebra of type $B_3$
contains a subalgebra ${\tilde\g}$ of type $G_2$.

Let $X_i, \alpha^{\vee}_i, Y_i\ (i=1,2,3)$ be the Weyl
generators for the simple Lie algebra of type $B_3$ corresponding
to the above Dynkin diagram, then the Lie subalgebra generated by
\[\tilde{X}_1=X_1+X_3, \tilde{\alpha}^{\vee}_1=H_1+H_3,\ \tilde{Y}_1=Y_1+Y_3,\
\tilde{X}_2=X_2,\ \tilde{\alpha}^{\vee}_2=H_2,\ \tilde{Y}_2=Y_2\] is
a simple Lie algebra of type $G_2$, and $\tilde{X}_i,
\tilde{\alpha}^{\vee}_i, \tilde{Y}_i\,(i=1,2)$ form a set of Weyl
generators of this subalgebra.

Note that in this case these two Lie algebras have the same
normalized invariant bilinear forms, so we can compute the central invariants
of associated to the simple Lie algebra of type $G_2$ from that of type
$B_3$ as follows:
\begin{align*}
\tilde{c}_1&=\frac{\la\tilde{\alpha}^{\vee}_1,\tilde{\alpha}^{\vee}_1\ra_{\tilde{\g}}}{48}=
\frac{\la \alpha^{\vee}_1+\alpha^{\vee}_3,\alpha^{\vee}_1+\alpha^{\vee}_3\ra_{\g}}{48}\\
&=\frac{\la \alpha^{\vee}_1,\alpha^{\vee}_1\ra_{\g}}{48}+\frac{\la \alpha^{\vee}_3,\alpha^{\vee}_3\ra_{\g}}{48}=c_1+c_3,\\
\tilde{c}_2&=\frac{\la\tilde{\alpha}^{\vee}_2,\tilde{\alpha}^{\vee}_2\ra_{\tilde{\g}}}{48}=
\frac{\la \alpha^{\vee}_2,\alpha^{\vee}_2\ra_{\g}}{48}=c_2.
\end{align*}
Here we used the fact $\la \alpha^{\vee}_1,
\alpha^{\vee}_3\ra_{\g}=0$.

In general, when we fold two vertices $i, j$ in a Dynkin
diagram, they must be non-connected. So we have $\la
\alpha^{\vee}_i, \alpha^{\vee}_j\ra_{\g}=0$. Then the central
invariant corresponding to the folded vertex reads
\[\tilde{c}=\frac{\la\tilde{\alpha}^{\vee}_i,\tilde{\alpha}^{\vee}_j\ra_{\tilde{\g}}}{48}
=\frac{\la \alpha^{\vee}_i,\alpha^{\vee}_i\ra_{\g}}{48}+\frac{\la
\alpha^{\vee}_j,\alpha^{\vee}_j\ra_{\g}}{48} =c_i+c_j.\]

On the other hand, the folding of Dynkin diagrams also
establishes relations of the bihamiltonian structures associated to
the relevant simple Lie algebras through Dirac reductions. The above
mentioned relation between the central invariants and the folding of
the Dynkin diagrams provide clues to understand connections of the
central invariants of two bihamiltonian structures, with one
bihamiltonian structure obtained from the other by Dirac reduction.
We will study this aspect in detail in subsequent papers.

\medskip

The proof of the theorem \ref{mainconj} will be given in Section \ref{a-n} for the $A_n$ series,
in Section \ref{bcd-n} for the $B_n$, $C_n$, $D_n$ series and in Section \ref{except} for the exceptional cases.

\section{The $A_n$ case}\label{a-n}
We first recall the Drinfeld - Sokolov bihamiltonian structure related to the
simple Lie algebra $\g$ of $A_n$
type. This Lie algebra has the matrix realization $sl(n+1,\mathbb{C})$. We
denote by $e_{ij}$ the matrix with $1$ at the $(i,j)$-th entry and
$0$ elsewhere. The Weyl generators of $\g$ are chosen as
\begin{equation}
X_i=e_{i,i+1},\ Y_i=e_{i+1,i},\ H_i=e_{i,i}-e_{i+1,i+1},\quad
i=1,\dots,n.
\end{equation}
We use here the invariant bilinear form
\begin{equation}\label{zh-9}
\la a, b\ra_\g=\tr(a\,b),
\end{equation}
which coincides with the normalized invariant bilinear form (\ref{normalized}) on $\g$.
The nilpotent subalgebra $\n$, the Borel subalgebra $\fb$ and the group $N$ are realized as
\begin{align*}
\n&=\{(a_{ij})\in \mathrm{Mat}(n+1,\mathbb{C})\left|\, a_{ij}=0, \mbox{ for } i \ge j\right.\},\\
\fb&=\{(a_{ij})\in \mathrm{Mat}(n+1,\mathbb{C})\left|\, a_{ij}=0, \mbox{ for } i > j\right.\},
\\
N&=\{(s_{ij})\in \mathrm{Mat}(n+1,\mathbb{C})\left|\, s_{ij}=0~\mbox{for}~ i>j, ~ s_{ii}=1\right.\}.
\end{align*}

The element $I\in\g$ that is introduced in (\ref{choice})
now has the expression $\sum_{i=1}^n e_{i+1,i}$. We choose the base element $\alpha\in\g$
of the center of $\n$, see (\ref{centr}), as
$$
\alpha=-e_{1,n+1}\in \n.
$$
Let $q$ be an element in $\hb$,
\[q=\sum_{i=1}^n\sum_{j=i}^{n+1}q_{ij}(x)\,e_{ij}-\sum_{i=1}^n q_{ii}(x)\, e_{n+1,n+1}. \]
We can choose the coordinate $q^{\can}$ on the orbit space (\ref{zh-2}) as \cite{DS}
\[q^{\can}=-(u_1(x)e_{1,n+1}+u_2(x)e_{2, n+1}+\cdots+ u_n(x)e_{n, n+1}),\]
where $u_k(x)$ are certain differential polynomials of $q_{ij}$.
Here and henceforth we use lower indices for the variable $u$ instead of upper ones as in
\eqref{q-can} for the convenience of presentation of relevant formulae.
Then the gauge invariant functionals take the following form
\begin{equation}
F=\int_{S^1} f(x, u(x), u_x(x), \cdots) dx. \label{funct}
\end{equation}

The space of the gauge invariant functionals can be
described in the following way \cite{DS}. Consider the operator
\begin{equation}\label{cal-L}
{\cal L}=\e\,\frac{d}{d x}+q+I
\end{equation}
as a $(n+1)\times (n+1)$ matrix with entries of differential operators. Let us  represent it in
the form
\begin{equation}
{\cal L}=\begin{pmatrix}\alpha & \beta\\ A&
\gamma\end{pmatrix}.
\end{equation}
Here $A$ is a $n\times n$ matrix. We can associate to
it a scalar differential operator
\begin{equation}\label{delta-op}
\Delta({\cal {L}}):=\beta-\alpha A^{-1} \gamma.
\end{equation}
Define
\begin{equation}
L=-\Delta({\cal L})^\dag,
\end{equation}
where the conjugation of a differential operator is defined as in
(\ref{adj}). It can be written in the form
\begin{equation}\label{lax-a}
L=D^{n+1}+u_n(x)D^{n-1}+\cdots+u_2(x) D+u_1(x), \ D=\e\,\frac{d}{dx}.
\end{equation}

Gauge invariant functionals on ${\cal M}$ will be identified with functionals on the space of Lax operators \eqref{lax-a}.
The variational derivative of a gauge invariant functional $F$ w.r.t $L$ is defined as
the following pseudo-differential operator
\[\frac{\dl F}{\dl L}=\sum_{i=1}^{n}D^{-i}\frac{\dl F}{\dl u_i}.\]
It is easy to verify the following identity
\begin{equation}
\delta F=\int \sum_{i=1}^n \frac{\delta F}{\delta u_i(x)} \delta u_i\, dx=
\Tr\left(\frac{\delta F}{\delta L} \delta L\right) \label{var-der}
\end{equation}
where the linear functional $\Tr$ on pseudo-differential operators is defined by
$$
\Tr A =\int \res A\, dx \in \bar {\cal B}
$$
and  the residue of a pseudo-differential operator has the definition
\[\res\,(\sum_{i\le m}f_iD^i)=f_{-1}.\]
Recall that, due to the important property of the residue
\begin{equation}\label{resid}
\res (BA) =\res(AB) + \mbox{total } x\mbox{-derivative},
\end{equation}
the formula
\[\Tr(AB)=\int_{S^1} \res(AB)\, dx\in \bar {\cal B} \]
defines an invariant symmetric inner product between two pseudo-differential operators.

In terms of the gauge invariant functionals $F, G$, the Drinfeld - Sokolov bihamiltonian structure
can be written as
\begin{align}
&\{F, G\}_\lm=\{F, G\}_2-\lm\{F, G\}_1 \label{bhs-a} \\
&=\frac1{\e}\Tr\left((LY)_+LX-XL(YL)_++\frac1{n+1}X[L,g_Y]\right)-\lm\,\frac1{\e}\Tr\left([Y,X]L\right),\nn
\end{align}
where $X=\frac{\dl F}{\dl L}, Y=\frac{\dl G}{\dl L}$, and the positive part of a pseudo-differential operator $Z=\sum z_i D^i$
is defined by
$$
Z_+=\sum_{i\ge0}z_i D^i.
$$
The function $g_Y$ is defined by
$$
g_Y=D^{-1}(\res[L,Y]).
$$
Due to \eqref{resid}, $g_Y$ is a differential polynomial of the coefficients of the operators $L, Y$.

In the computation of Poisson brackets of our type it suffices to deal with the linear functionals
\begin{equation}
\ell_X=\int \sum_{i=1}^n a_i(x) u_i(x) dx,\ \ell_Y=\int \sum_{i=1}^n b_i(x) u_i(x) dx. \label{linear}
\end{equation}
Then the operators $X=\delta \ell_X/\delta L, \, Y=\delta \ell_Y/\delta L$ read
\begin{equation}\label{oper-xy}
X=\sum_{i=1}^n D^{-i}a_i(x), \ Y=\sum_{i=1}^n D^{-i}b_i(x).
\end{equation}

%To compute the pencil of flat metrics and the central invariants of the bihamiltonian structure \eqref{bhs-a}
%we must expand it with respect to the powers of $\epsilon$. To this end introduce a slow variable $\tilde{x}=\e\,x$,
%and denote $\frac{d}{d\tilde{x}}$ by $\p_x$. Then we have $D=\e\,\p_x$ and
%$[D, \tilde{x}]=\e$.
For a pseudo-differential operator $Z=\sum_{i \le m} z_i(x) D^i$, define its symbol as
\[\hat{Z}(x,p)=\sum_{i \le m}z_i(x) p^i.\]
The symbol of the composition of two pseudo-differential operators can be computed by the following
well known formula\footnote{Warning: we use here the symbol $\star$ that usually arises in the quantization
of Poisson brackets. However our ``star product" is different from the standard one.}
\begin{eqnarray}
&&
\widehat{Z_1Z_2}(x,p)=\hat{Z}_1(x,p)\star\hat{Z}_2(x,p):= e^{\epsilon\frac{\p^2}{\p p \p x'} }\hat Z_1(x,p) \hat Z_2(x', p')|_{x'=x, p'=p}
\nn\\
&&
\nn\\
&&
=
\sum_{k=0}^\infty\frac{\e^k}{k!}\p_p^k \hat{Z}_1(x,p)\,\p_x^k \hat{Z}_2(x,p).
\label{star}
\end{eqnarray}
Taking the commutator in the leading term one obtains the Poisson bracket
on the $(x,p)$-plane as follows
\begin{eqnarray}\label{2pb}
&&
f(x,p)\star g(x,p) - g(x,p) \star f(x,p) =\epsilon \,\{ f, g\}  +O(\epsilon^2),
\nn\\
&&
\\
&&
 \{ f, g\}:= \frac{\p f}{\p p} \frac{\p g}{\p x} - \frac{\p g}{\p p} \frac{\p f}{\p x}.
\nn
\end{eqnarray}
In the sequel we will often omit writing explicitly the $x$-dependence of the symbol.

The symbol of the positive part of a pseudo-differential operator can be computed by Cauchy integral formula
\begin{equation}
\widehat{Z_+}(p)=\left(\hat{Z}(p)\right)_+=\oint\frac{dq}{2 \pi i}\frac{\hat{Z}(q)}{q-p}.\label{cauchy}
\end{equation}
where the integration is taken along the circle of radius $|q|>|p|$.

Let
$$
\lm(x,p)=p^{n+1}+u_n(x)p^{n-1}+\cdots+u_2(x)p+u_1(x)=\hat L
$$
be the symbol of the Lax operator \eqref{lax-a}.

\begin{theorem}\label{theor-a} (i) The dispersionless limit of the $A_n$ Drinfeld - Sokolov bihamiltonian structure is given by the following formulae
\beq\label{pen1-a}
\begin{split}
&\{ \lambda(x,p), \lambda(y,q)\}_1 = \frac{\lambda'(p)-\lambda'(q)}{p-q}\, \delta'(x-y)\\
&
\quad  + \left[
\frac{\lambda_x(p)-\lambda_x(q)}{(p-q)^2}
-\frac{\lambda'_x(q)}{p-q}\right]\,\delta(x-y),
\end{split}
\eeq
\beq\label{pen2-a}
\begin{split}
&\{ \lambda(x,p), \lambda(y,q)\}_2\\
=&\left(\frac{\lambda'(p)\lambda(q)-\lambda'(q)\lambda(p)}{p-q}+\frac1{n+1} \lambda'(p)\lambda'(q)\right)\, \delta'(x-y)
\\
&+\left[
\frac{\lambda_x(p)\lambda(q)-\lambda_x(q)\lambda(p)}{(p-q)^2}
+\frac{\lambda_x(q)\lambda'(p)-\lambda'_x(q) \lambda(p)}{p-q}\right.\\
&\quad \ \ \left.+\frac1{n+1} \lambda'(p)
\lambda'_x(q)\right]\, \delta(x-y).
\end{split}
\eeq
(ii) The central invariants of the bihamiltonian structure are equal to
$$
c_1=c_2=\dots=c_n=\frac1{24}.
$$
\end{theorem}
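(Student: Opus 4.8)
The plan is to carry out everything inside the symbol calculus set up above, so that the operator identity \eqref{bhs-a} becomes a computation with ordinary functions of $(x,p)$ governed by the Poisson bracket \eqref{2pb}. For part (i) I would evaluate \eqref{bhs-a} on the linear functionals $\ell_X,\ell_Y$ of \eqref{linear}, whose variational derivatives have symbols $\hat X=\sum_i a_i(x)p^{-i}$ and $\hat Y=\sum_j b_j(x)p^{-j}$. Each factor is replaced by its symbol, each product by the star product \eqref{star}, each positive part by the Cauchy integral \eqref{cauchy}, and $\Tr$ by $\int dx\oint\frac{dp}{2\pi i}$. One checks first that the order $\e^0$ part of the trace vanishes identically, being a commutator of ordinary (commuting) functions, so that the $\e^{-1}$ prefactor is compensated and the dispersionless bracket is the order $\e$ contribution of the trace, built from the Poisson bracket \eqref{2pb} and from $[L,g_Y]$. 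Collecting the result as a bilinear form in $a_i(x),b_j(y)$, integrating by parts, and repackaging through the generating function $\lambda(x,p)=\hat L$ gives \eqref{pen1-a} from $\frac1\e\Tr([Y,X]L)$ and \eqref{pen2-a} from the three terms of the second bracket; the summand $\frac1{n+1}\lambda'(p)\lambda'(q)$ is the only one carrying the factor $\frac1{n+1}$ and must therefore originate from the $\frac1{n+1}X[L,g_Y]$ term.

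For part (ii) the geometric input is that the canonical coordinates are the critical values of the symbol, $\lambda^i=\lambda(x,p_i)$ where $p_1,\dots,p_n$ are the roots of $\lambda'(x,p)=0$. This is forced by the diagonalization recorded after the definition of semisimplicity: substituting $p=p_i,\,q=p_j$ into the $\delta'$-coefficients of \eqref{pen1-a} and \eqref{pen2-a} and using $\lambda'(p_i)=0$ makes both forms diagonal, with $g_1^{ij}=f^i\delta^{ij}$, $g_2^{ij}=\lambda^if^i\delta^{ij}$ and $f^i=\lambda''(p_i)$ (up to the sign fixed by the reduction). The crucial simplification is that, since $\lambda'(p_i)=0$, one has the exact identity $\partial\lambda^i/\partial u_k=p_i^{k-1}$; the change $u\mapsto\lambda^i$ is therefore an honest point transformation whose Jacobian matches the monomial structure $\partial\lambda(x,p)/\partial u_k=p^{k-1}$ of the generating function. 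Consequently the top $\delta$-derivative coefficient at every order in $\e$, which transforms tensorially under a point transformation, is obtained in the canonical coordinates simply by setting $p=p_i,\,q=p_j$ in the corresponding coefficient of $\{\lambda(x,p),\lambda(y,q)\}_a$.

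It remains to read off the deformation data entering \eqref{fc}: $P^{ij}_a=A^{ij}_{1,0;a}$ is the coefficient of $\e\,\delta''(x-y)$ and $Q^{ij}_a=A^{ij}_{2,0;a}$ that of $\e^2\,\delta'''(x-y)$. I would push the symbol computation of part (i) to orders $\e$ and $\e^2$, keep only the highest $\delta$-derivative at each order, and substitute the critical points. Since the combinations appearing in \eqref{fc} are $Q^{ii}_2-\lambda^iQ^{ii}_1$ and $P^{ki}_2-\lambda^iP^{ki}_1$, it is most economical to compute the pencil bracket $\{\,,\,\}_2-\lambda^i\{\,,\,\}_1$ directly, whose symbol is controlled by $\lambda(x,p)-\lambda^i$; as $\lambda^i$ is a critical value this has a double zero at $p_i$, so the local model near each critical point is that of a single KdV field, explaining heuristically why all $c_i$ coincide. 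Feeding $f^i=\lambda''(p_i)$, together with the extracted $P^{ki}_2-\lambda^iP^{ki}_1$ and $Q^{ii}_2-\lambda^iQ^{ii}_1$, into \eqref{fc} and simplifying with $\lambda'(p_i)=0$ is expected to give $c_i=\tfrac1{24}$, in agreement with $c_i=\tfrac1{48}\la\alpha_i^\vee,\alpha_i^\vee\ra_\g=\tfrac1{48}\cdot2$ predicted by Theorem \ref{mainconj} for the simply laced system $A_n$.

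The main obstacle is the explicit higher-order symbol computation of the dispersive terms, above all at order $\e^2$: tracking the star product \eqref{star} to second order, the positive-part operation \eqref{cauchy}, and the contribution of $g_Y=D^{-1}(\res[L,Y])$ to this order, and then isolating the $\delta'''$ coefficient $Q^{ij}_a$. A secondary delicate point is the coincidence limit $q\to p$ needed for the diagonal quantities $Q^{ii}_a$ and $f^i$, where the $\frac1{p-q}$ singularities of the kernels must be expanded carefully around a critical point. Once these residue computations are under control, the reduction to the single value $\tfrac1{24}$ via \eqref{fc} is a short algebraic simplification.
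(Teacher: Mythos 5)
Your plan for part (i) is essentially the paper's own proof: evaluate \eqref{bhs-a} on the linear functionals \eqref{linear}, pass to symbols via \eqref{star} and \eqref{cauchy}, observe that the order-$\e^0$ part of the trace vanishes so that the $\e^{-1}$ prefactor is compensated, and repackage the order-$\e$ term as a generating function in $(p,q)$. Your setup for part (ii) also coincides with the paper's strategy: the canonical coordinates are the critical values $\lm^i=\lm(r_i)$, the identity $\p\lm^i/\p u_k=r_i^{k-1}$ (valid because $\lm'(r_i)=0$) makes the top $\delta$-derivative coefficients at each order in $\e$ transform tensorially, and jet-independence of $P^{ij}_a$, $Q^{ij}_a$ lets one treat $u$ as constant throughout; the paper implements the substitution at critical points by the device $f(p)=\frac{1}{p-P}\,\delta(x-y)$, which turns $\ell_X$ into $\lm(y,P)-P^{n+1}$ and \eqref{lxly} into the generating relation \eqref{lmlm}.

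There is, however, a genuine gap at the endgame of part (ii). Granting the $\e^2$ symbol computation (which you acknowledge but do not perform), one finds $f^i=\lm''(r_i)$, $Q^{ii}_2-\lm^iQ^{ii}_1=\frac{n}{n+1}\frac{\lm''(r_i)^2}{4}$ (the $\lm^{(4)}$ terms cancel in this combination), and $P^{ki}_2-\lm^iP^{ki}_1=\frac{\lm''(r_i)\,(\lm(r_k)-\lm(r_i))}{2(r_k-r_i)}$, so that \eqref{fc} gives
\[
c_i=\frac1{12}\left(\frac{n}{n+1}+\sum_{k\ne i}\frac{\lm(r_k)-\lm(r_i)}{\lm''(r_k)(r_k-r_i)^2}\right).
\]
This expression is not manifestly constant, and the off-diagonal sum is not negligible: it equals $\frac{1-n}{2(n+1)}$, which is precisely what converts $\frac{n}{12(n+1)}$ into $\frac1{24}$. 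The paper obtains this identity by applying the residue theorem to the meromorphic function $m(q)=\frac{\lm(q)-\lm(r_i)}{\lm'(q)(q-r_i)^2}$; your proposal contains no substitute for this step, and describing the conclusion as a ``short algebraic simplification'' underestimates it. Worse, your heuristic that near each critical point the pencil looks like a decoupled KdV field (``explaining why all $c_i$ coincide'') points in the wrong direction: the purely local, diagonal contribution alone gives $\frac{n}{12(n+1)}\ne\frac1{24}$ for $n>1$, so the interaction terms $P^{ki}$ coupling distinct critical points, summed globally by the residue argument, are essential to the final constant value.
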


Before proceeding to the proof let us explain the notations in the formulae \eqref{pen1-a} - \eqref{pen2-a}. In the left hand sides we simply write the generating polynomials for the matrices $\{ u_i(x), u_j(y)\}_{1,\, 2}$ of Poisson brackets, i.e.,
$$
\{ \lambda(x,p), \lambda(y,q)\}_{1, \, 2}
=\sum_{i,j=1}^n \{ u_i(x), u_j(y)\}_{1,\, 2} p^{i-1} q^{j-1}.
$$
In the right hand sides we denote $\lambda(p)\equiv \lambda(x,p)$,
$$
\lambda'(p) =\frac{\partial}{\partial p} \lambda(x,p),  \quad \lambda_x(p)=\partial_x \lambda(x,p).
$$
Same for the terms depending on $q$, i.e. $\lambda(q)\equiv \lambda(x,q)$, $\lambda'(q) =\frac{\p}{\p q} \lambda(x,q)$ etc.
Observe that the sign of the second metric (the coefficients of $\delta'(x-y)$ of (\ref{pen2-a}))
is opposite to the one given in Proposition 2.4.2 of \cite{S2}.

\smallskip

\begin{proof} Let us introduce the symbols
\begin{equation}\label{sym-fg}
f(p)=\sum_{i=1}^{n}\frac{a_i(x)}{p^{i}},\quad g(p)=\sum_{i=1}^{n}\frac{b_i(x)}{p^{i}}.
\end{equation}
They are related to the symbols of the operators \eqref{oper-xy} via
\begin{equation}
 \hat{X}(p)=\sum_{k=0}^\infty\frac{\e^k}{k!}\p_p^k\p_x^kf(p),\
\hat{Y}(p)=\sum_{k=0}^\infty\frac{\e^k}{k!}\p_p^k\p_x^kg(p). \label{smbl-a}
\end{equation}
We begin with the calculation of the leading term of the first Poisson bracket. Due to \eqref{2pb} one obtains
$$
\{ \ell_X, \ell_Y\}_1 =\int \res \left( \{ g(x,p), f(x,p)\}\,\lambda(x,p)\right) dx +O(\epsilon).
$$
Here $\res$ of a symbol is just the coefficient of $p^{-1}$. Integrating by parts one rewrites
$$
\int \res \left( \{ g, f\}\,\lambda\right) dx= \int \res \left(f \, \{ \lambda, g\}\right) dx.
$$
As the series $f$ contains only negative powers of $p$, one can replace the series $  \{ \lambda, g\}$ by its positive part
$$
 \{ \lambda, g\}_+ =\oint \frac{dq}{2\pi i} \frac{\lambda'(q) g_x(q) -\lambda_x(q) g'(q)}{q-p}.
$$
Integrating by parts in $q$ and inserting two zero terms
$$
-\oint \frac{dq}{2\pi i} \frac{\lambda'(p)}{q-p}\, g_x(q)=0, \quad
\oint \frac{dq}{2\pi i} \frac{\lambda_x(p)}{(q-p)^2}\, g(q)=0
$$
one obtains the following expression for the leading term of the first Poisson bracket
\begin{align}
\{ \ell_X, \ell_Y\}_1 & = \int dx \oint \frac{dp}{2 \pi i}
\oint \frac{dq}{2 \pi i} \left[ f(p) \frac{\lambda'(p) -\lambda'(q)}{p-q} g_x(q) \right.
%\nn\\
%&
\nn\\
&\left.
+\left( \frac{\lambda_x(p) -\lambda_x(q)}{(p-q)^2} -\frac{\lambda'_x(q)}{p-q}\right)\, f(p) g(q)\right] + O(\epsilon).
\nn
\end{align}
This gives the formula \eqref{pen1-a}. Note that the rational functions
$$
\frac{\lambda'(p) -\lambda'(q)}{p-q}
$$
and
$$
\frac{\lambda_x(p) -\lambda_x(q)}{(p-q)^2} -\frac{\lambda'_x(q)}{p-q}
$$
have no singularity on the diagonal, so the order of the loop integrals is inessential.

A similar computation proves also the formula \eqref{pen2-a}.

Let us proceed to computing the higher order corrections.
Note that what we want to compute is just four tensors $P^{ij}_a(u), Q^{ij}_a(u)$ ($a=1,2$)
independent of the jet coordinates (see (\ref{fc})). So through the
computation we can omit all the derivatives of $u_i$ w.r.t. $x$, i.e. we can treat $u_i$ as constants.
By using this assumption, one can obtain
\begin{equation}
g_Y=\oint\frac{dq}{2 \pi i}\sum_{k=1}^\infty\frac{\e^{k-1}}{k!}\p_p^k\hat{L}(q)\p_x^{k-1}\hat{Y}(q). \label{gy}
\end{equation}

By substituting the formulae \eqref{star}, \eqref{cauchy}, \eqref{smbl-a}, \eqref{gy} into the formula
\eqref{bhs-a}, we can obtain
\begin{align*}
\{\ell_X, \ell_Y\}_a=&\int dx \oint\frac{dp}{2 \pi i} \oint\frac{dq}{2 \pi i} \\
&\sum_{k, i, s, j, t\ge0} \p_p^i\p_x^s f(p) \tilde{A}_{a, k, i, s, j, t}(p,q,x)
\e^{k+s+t-1} \p_q^j\p_x^t g(q), \ a=1,2.
\end{align*}
After few integration by parts, the above equation  reduces to the following one
\begin{equation}\label{lxly}
\{\ell_X, \ell_Y\}_a=\int dx \oint\frac{dp}{2 \pi i} \oint\frac{dq}{2 \pi i}
\sum_{k,s\ge0} f(p) A_{a,k,s}(p,q,x) \e^{k+s-1}\p^s_x g(q).
\end{equation}
We already know the coefficients
$$
A_{1,0,1}=\frac{\lambda'(p) -\lambda'(q)}{p-q}
$$
and
$$
A_{2,0,1} =\frac{\lambda'(p)\lambda(q)-\lambda'(q)\lambda(p)}{p-q}+\frac1{n+1} \lambda'(p)\lambda'(q).
$$
The subsequent coefficients $A_{a,0,2}$,  $A_{a,0,3}$\,$(a=1,2)$  read
\begin{align}
A_{1,0,2}=&\frac{\lm'(q)-\lm'(p)}{(q-p)^2}-\frac{\lm''(q)+\lm''(p)}{2(q-p)}, \nn\\
A_{1,0,3}=&\frac{\lm'(q)-\lm'(p)}{(q-p)^3}-\frac{\lm''(q)+\lm''(p)}{2(q-p)^2}+\frac{\lm'''(q)-\lm'''(p)}{6(q-p)}, \nn\\
A_{2,0,2}=&\frac{\lm'(q)\lm(p)-\lm(q)\lm'(p)}{(q-p)^2}-\frac{\lm''(q)\lm(p)-2\lm'(q)\lm'(p)+\lm(q)\lm''(p)}{2(q-p)} \nn\\
&-\frac{\lm''(q)\lm'(p)-\lm'(q)\lm''(p)}{2(n+1)}, \nn\\
A_{2,0,3}=&\frac{\lm'(q)\lm(p)-\lm(q)\lm'(p)}{(q-p)^3}-\frac{\lm''(q)\lm(p)-2\lm'(q)\lm'(p)+\lm(q)\lm''(p)}{2(q-p)^2} \nn\\
&+\frac{\lm'''(q)\lm(p)-3\lm''(q)\lm'(p)+3\lm'(q)\lm''(p)-\lm(q)\lm'''(p)}{6(q-p)} \nn\\
&+\frac{2\lm'''(q)\lm'(p)-3\lm''(q)\lm''(p)+2\lm'(q)\lm'''(p)}{12(n+1)}. \label{a123}
\end{align}

Now we introduce two complex numbers $P, Q$ such that $|P|<|p|, |Q|<|q|$, and define the functions $f(p), g(p)$ as
\[f(p)=\frac1{p-P}\dl(x-y)=\sum_{i=1}^\infty\frac{P^{i-1}}{p^i}\dl(x-y),\ g(p)=\frac1{q-Q}\dl(x-z).\]
Here, unlike the form given in (\ref{sym-fg}), we allow the symbols $f(p), g(p)$ to contain terms of
the form $\frac1{p^i}$ with $i>n$. However, it is easy to see that these additional
terms do not affect the Poisson bracket \eqref{lxly}.

It follows then that
\begin{align*}
\ell_X&=\lm(y,P)-P^{n+1}=u_n(y)P^{n-1}+\cdots+u_2(y)P+u_1(y), \\
\ell_Y&=\lm(z,Q)-Q^{n+1}=u_n(z)Q^{n-1}+\cdots+u_2(z)Q+u_1(z),
\end{align*}
and the formula \eqref{lxly} reads
\begin{align}
\{\lm(y,P), \lm(z,Q)\}_a&=\sum_{k,s\ge0}\e^{k+s-1}\delta^{(s)}(y-z)
\left[\oint\frac{dp}{2\pi i}\oint\frac{dq}{2\pi i} \frac{A_{a,k,s}(p,q,y)}{(p-P)(q-Q)}\right] \nn\\
&=\sum_{k,s\ge0}\e^{k+s-1}A_{a,k,s}(P,Q,y)\delta^{(s)}(y-z). \label{lmlm}
\end{align}

Let $r_1, \cdots, r_n$ be the critical points of the polynomial $\lambda(p)$, i.e., the roots of $\lm'(r)=0$. Assuming them to be pairwise distinct, we have
\[A_{1,0,1}(r_i, r_j, x)=\dl_{ij}\lm''(x,r_i),\ A_{2,0,1}(r_i, r_j, x)=\dl_{ij}\lm(x,r_i)\lm''(x,r_i).\]
This shows that the critical {\it values} $\lm_i=\lm(r_i)$ are the canonical coordinates of the bihamiltonian structure \eqref{lmlm}.
Then the quantities in the formula \eqref{fc} read
\begin{align*}
&f^i=\lm''(r_i), \\
&Q^{ii}_1=\frac1{12}\lm^{(4)}(r_i), \
Q^{ii}_2=\frac1{12}\lm(r_i)\lm^{(4)}(r_i)+\frac{n}{n+1}\frac{\lm''(r_i)^2}4, \\
&P^{ki}_1=\frac{\lm''(r_k)+\lm''(r_i)}{2(r_k-r_i)},\
P^{ki}_2=\frac{\lm''(r_k)\lm(r_i)+\lm(r_k)\lm''(r_i)}{2(r_k-r_i)}.
\end{align*}
Thus the central invariants read
\begin{align*}
c_i=&\frac1{3\lm''(r_i)^2}\left(\frac{n}{n+1}\frac{\lm''(r_i)^2}4+
\sum_{k \ne i}\frac{(\lm(r_k)-\lm(r_i))\lm''(r_i)^2}{4\lm''(r_k)(r_k-r_i)^2}\right)\\
=&\frac1{12}\left(\frac{n}{n+1}+\sum_{k \ne i}\frac{(\lm(r_k)-\lm(r_i))}{\lm''(r_k)(r_k-r_i)^2}\right)
=\frac1{12}\left(\frac{n}{n+1}+\frac{1-n}{2(n+1)}\right)\\
=&\frac1{24}.
\end{align*}
Here the third equality is obtained by applying the residue theorem to the meromorphic function
$$
m(q)=\frac{\lm(q)-\lm(r_i)}{\lm'(q)(q-r_i)^2}.
$$
The Theorem is proved.
\end{proof}

\section{The $B_n$, $C_n$ and $D_n$ cases}\label{bcd-n}

The simple algebras of type $B_n$, $C_n$ and $D_n$ can be realized as matrix Lie algebras $o(2n+1)$, $sp(2n)$ and $o(2n)$.
The details of these realizations are omitted here, see Appendix 1 of \cite{DS}. Note that the Weyl generators
$X_i, Y_i, H_i$ we choose here correspond respectively to $Y_i, X_i, -H_i$ of \cite{DS}.
We begin with the following scalar differential operators satisfying certain symmetry/antisymmetry conditions:
\begin{align}
B_n:&\quad L=D^{2n+1}+\sum_{i=1}^n u_i(x)\,D^{2i-1}+\sum_{i=1}^n v_i(x)\,D^{2i-2}, \quad L+L^\dag=0\label{b-lax}\\
C_n:&\quad L=D^{2n}+\sum_{i=1}^n u_i(x)\,D^{2i-2}+\sum_{i=2}^{n} v_i(x)\,D^{2i-3}, \quad\quad L=L^\dag\label{c-lax}\\
D_n:&\quad L=D^{2n-1}+\sum_{i=2}^{n} u_i(x)\,D^{2i-3}+\sum_{i=2}^{n} v_i(x)\,D^{2i-4}+\rho(x)D^{-1}\rho(x),
\label{d-lax}\\  &\quad\quad\quad\quad\quad\quad\quad\quad\quad\quad\quad\quad\quad\quad\quad\quad\quad
\quad\quad\quad\quad\quad ~ L+L^\dag=0.
\nn
\end{align}
Here  $L^\dag$ is the adjoint operator \eqref{adj}, the coefficients $v_i(x)$ are
linear combinations of derivatives of $u_i(x)$ uniquely determined by the symmetry/antisymmetry conditions.
We assume $u_1(x)=\rho^2(x)$ for the $D_n$ case.

As for the $A_n$ case, the above scalar (pseudo) differential operators can also be derived from the differential operator
${\cal L}$ of the form (\ref{cal-L}). In the present cases, the matrices $q$ are upper triangular ones  belonging to ${\frak{o}}(2 n+1),\,
{\frak{sp}}(2n)$ and ${\frak{o}}(2n)$ respectively. The matrices $I$ are given respectively by
\begin{equation}
I=\sum_{i=1}^n \left(e_{i+1,i}+e_{2n+2-i,2n+1-i}\right),\quad
I=\sum_{i=1}^{n-1} \left(e_{i+1,i}+e_{2n+1-i,2n-i}\right)+e_{n+1,n}\nn
\end{equation}
and
\begin{equation}
I=\sum_{i=1}^{n-1} \left(e_{i+1,i}+e_{2n+1-i,2n-i}\right)+\frac12 (e_{n+1,n-1}+e_{n+2,n}).\nn
\end{equation}
The scalar differential operators $L$ are given by
$
-\Delta({\cal L})^\dagger
$,
where the operator $\Delta$ is defined as in (\ref{delta-op}).

The variational derivative of a functional of $L$ w.r.t. $L$ is now defined as
\begin{equation}
\frac{\delta F}{\delta L}=\frac12\sum_{i=1}^n \left(D^{-2i+\nu}\frac{\delta F}{\delta u_i(x)}
+\frac{\delta F}{\delta u_i(x)}D^{-2i+\nu}\right), \label{vder}
\end{equation}
where $\nu=0,1,2$ for the $B_n$, $C_n$ and $D_n$ cases respectively.
This definition ensures the validity of \eqref{var-der}.

In order to have a uniform expression of the Drinfeld - -Sokolov second hamiltonian structures
for the three types of simple Lie algebras, we fix in this section the invariant bilinear form on $\g$
by
\beq\label{zh-7}
\la a,b\ra_{\g}=\tr(a\,b).
\eeq
Let us note that the normalized invariant bilinear form defined in (\ref{normalized}) for the
simple Lie algebras of type $B_n, C_n, D_n$ have the
expressions
\beq\label{zh-8}
\frac12\,\tr(a\,b),\quad \tr(a\,b), \quad \frac12\,\tr(a\,b)
\eeq
respectively.
With the above fixed invariant bilinear form, the
second hamiltonian structures
for the three types of simple Lie algebras
have a uniform expression
\begin{equation}
\{F,G\}_2=\frac1{\e} \Tr\left[(L Y)_+ L X-X L (Y L)_+\right], \label{bhs}
\end{equation}
while the first ones are defined as the Lie derivatives of the second ones
along the coordinate $u_i$, where $i=1$ for $B_n$, $C_n$ and $i=2$ for $D_n$,
\[\{F,G\}_2(u_i,\cdots)-\lm \{F,G\}_1(u_i,\cdots)=\{F,G\}_2(u_i-\lm,\cdots).\]
Explicitly,
\begin{align}
B_n:&\quad \{F,G\}_1=\frac1{\e} \Tr L\left(Y D X-X D Y\right), \label{bhs-1-b}\\
C_n:&\quad \{F,G\}_1=\frac1{\e} \Tr L\left(Y X-X Y\right), \label{bhs-1-c}\\
D_n:&\quad \{F,G\}_1=\frac1{\e} \Tr L\left(X_+ D Y_+-Y_+ D X_++Y_- D X_--X_- D Y_-\right). \label{bhs-1-d}
\end{align}

Let us now describe the main result of this section. Let
\begin{eqnarray}\label{symb-b}
&&
\lambda_B(p) = p^{2n+1} +\sum_{i=1}^n u_i(x) p^{2i-1}
\\
&&\label{symb-c}
\lambda_C(p) =p^{2n} + \sum_{i=1}^n u_i(x) p^{2i-2}
\\
&&\label{symb-d}
\lambda_D(p) =p^{2n-1} +\sum_{i=2}^n u_i(x) p^{2i-3} +\frac{u_1(x)}{p}
\end{eqnarray}
be the $\epsilon=0$ limits of the symbols of the Lax operators \eqref{b-lax} - \eqref{d-lax}. Introduce
\beq\label{pol-lam}
\begin{split}
&\Lambda_B(P)=\Lambda_C(P)=P^n + u_n(x) P^{n-1} +\dots +u_1(x)\\
&
\Lambda_D(P) = P^{n-1} +u_n(x) P^{n-1} + \dots +u_{n-1}(x) +\frac{u_1(x)}{P}.
\end{split}
\eeq
by the following subtitution:
\begin{eqnarray}\label{subs-pol}
&&
\lambda_B(p) = p \, \Lambda_B(p^2)
\nn\\
&&
\lambda_C(p) = \Lambda_C(p^2)
\\
&&
\lambda_D(p) =p\, \Lambda_D(p^2).
\nn
\end{eqnarray}

\begin{theorem}\label{theor-bcd}
(i) The dispersionless limits of the Drinfeld - Sokolov bihamiltonian structures associated to
the simple Lie algebras of type $B_n$, $C_n$, and $D_n$ have the following uniform
expression
\begin{align}\label{pen1-bcd}
&\{\LM(x,P),\LM(y,Q)\}_1=2\frac{P\LM'(P)-Q\LM'(Q)}{P-Q}\dl'(x-y)\nn\\
&\qquad+\left[\frac{P+Q}{(P-Q)^2}(\LM_x(P)-\LM_x(Q))-2\frac{Q\LM'_x(Q)}{P-Q}\right]\dl(x-y),\\
&
\nn\\
&\{\LM(x,P),\LM(y,Q)\}_2=2\frac{P\LM'(P)\LM(Q)-Q\LM'(Q)\LM(P)}{P-Q}\dl'(x-y)\nn\\
&\qquad+\left[\frac{P+Q}{(P-Q)^2}(\LM_x(P)\LM(Q)-\LM_x(Q)\LM(P))\right.\nn\\
&\qquad\qquad\left.+2\frac{P\LM'(P)\LM_x(Q)-Q\LM'_x(Q)\LM(P)}{P-Q}\right]
\dl(x-y),\label{pen2-bcd}
\end{align}
where $\LM(x,P)=\LM_B$,  $\LM_C$, or $\LM_D$ respectively.

(ii) The central invariants of the Drinfeld - Sokolov bihamiltonian structures read
\begin{align}
B_n:&\quad  c_1=\cdots=c_{n-1}=\frac1{12},\ c_n=\frac16\\
C_n:&\quad  c_1=\cdots=c_{n-1}=\frac1{12},\ c_n=\frac1{24}\\
D_n:&\quad c_1=c_2=\cdots=c_n=\frac1{12}.
\end{align}
\end{theorem}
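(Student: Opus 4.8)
The plan is to adapt the $A_n$ computation of Theorem \ref{theor-a} to the three families, using the square-root substitution $P = p^2$ of \eqref{subs-pol} as the organizing device and exploiting the fact that the second Poisson bracket \eqref{bhs} has \emph{the same} form for $B_n$, $C_n$, $D_n$. A convenient preliminary reduction concerns the first bracket: by the defining relation displayed just before \eqref{bhs-1-b}, it is the Lie derivative of the second along $\partial_{u_1}$ (for $B_n, C_n$) or $\partial_{u_2}$ (for $D_n$), and since $\partial_{u_1}\Lambda(P) = 1$ (respectively $\partial_{u_2}\Lambda_D(P) = 1$) a direct differentiation of the generating function \eqref{pen2-bcd} reproduces \eqref{pen1-bcd} verbatim. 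Thus it suffices to establish the second-bracket formula \eqref{pen2-bcd}, and \eqref{pen1-bcd} follows by a one-line computation valid to all orders in $\epsilon$.

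For part (i) I would insert the symbols of the symmetrized variational derivatives \eqref{vder} into \eqref{bhs} and extract the $\epsilon \to 0$ limit using \eqref{2pb} and the Cauchy formula \eqref{cauchy}, exactly as in the derivation of \eqref{pen2-a}. The symmetrization together with the shift $\nu = 0, 1, 2$ forces the leading symbols of $X, Y$ to carry a definite parity in $p$ --- even, odd, even respectively --- matching the parity of the Lax symbols \eqref{symb-b}--\eqref{symb-d}; substituting $P = p^2$ and using the identity $\frac{1}{p-q} = \frac{p+q}{P-Q}$ then reorganizes the loop integrals into the stated uniform form \eqref{pen2-bcd}. The interplay of these parity factors with the Jacobian $dP = 2p\,dp$ is what produces the factors of $P$, $Q$ and the overall $2$ appearing in \eqref{pen1-bcd}--\eqref{pen2-bcd}, and verifying that the three distinct parity patterns collapse to a single formula is the first point requiring care.

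For part (ii) I would compute the subleading coefficients $A_{a,0,2}$ and $A_{a,0,3}$ of the expansion \eqref{lxly}, which encode the tensors $P^{ij}_a$, $Q^{ij}_a$ entering \eqref{fc}; as in the $A_n$ argument one may set all $x$-derivatives of the $u_i$ to zero throughout. Passing to the variable $P = p^2$, the leading metric \eqref{pen1-bcd} diagonalizes at the roots $R_1, \dots, R_n$ of $\psi(P) := P\,\Lambda'(P) = 0$, where one finds $f^i = 2\psi'(R_i)$ and canonical coordinate $\lambda^i = \Lambda(R_i)$ --- the analogue of the relation $A_{1,0,1}(r_i, r_j) = \delta_{ij}\lambda''(r_i)$ used before. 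Substituting the four tensors into \eqref{fc} and evaluating the off-diagonal sum by the residue theorem, applied to a meromorphic function built from $\Lambda$ and $\psi$ in the spirit of $m(q)$ in the $A_n$ proof, should yield the stated constants.

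The main obstacle, and the origin of the ``symmetry breaking'' in the answer, is the exceptional root of $\psi(P) = P\,\Lambda'(P)$ at the origin. For $B_n$ and $C_n$ the polynomial $\Lambda$ has a nonzero constant term, so $P = 0$ is a root of $\psi$ at which $\Lambda'(0) \neq 0$; the local analysis of \eqref{fc} there differs genuinely from that at a true critical point of $\Lambda$, and this is the coordinate carrying the anomalous value $c_n = \frac16$ (for $B_n$) or $c_n = \frac1{24}$ (for $C_n$). Since the two families share the dispersionless limit but differ in the parity of the Lax symbol --- $\lambda_C = \Lambda_C(p^2)$ is even while $\lambda_B = p\,\Lambda_B(p^2)$ is odd --- the distinction between $\frac16$ and $\frac1{24}$ can only emerge from the subleading coefficient $A_{a,0,3}$, which must therefore be evaluated honestly at $P = 0$ rather than inferred from the leading term. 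For $D_n$ the term $u_1/P$ in $\Lambda_D$ turns $P = 0$ into a pole of $\psi$ rather than a zero, so all $n$ canonical coordinates are of the generic type and the computation is uniform, giving $c_i \equiv \frac1{12}$; one must still check that the pseudo-differential tail $\rho\,D^{-1}\rho$ and the split first bracket \eqref{bhs-1-d} do not disturb this, but these enter only through $u_1 = \rho^2$ and should not affect the residue evaluation.
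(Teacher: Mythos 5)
Your proposal is correct and follows essentially the same route as the paper: part (i) is the same symbol calculus as in the $A_n$ case (the paper in fact omits these details entirely), the first bracket is recovered from the second by the shift $u_1\mapsto u_1-\lambda$ (resp.\ $u_2\mapsto u_2-\lambda$ for $D_n$) exactly as the paper obtains $R_{1,1}$ from $R_{2,1}$ ``by definition'', and part (ii) proceeds, as in the paper, by computing $A_{a,0,2}$, $A_{a,0,3}$ with the jet variables suppressed, passing to $P=p^2$, and evaluating \eqref{fc} at the roots of $P\,\Lambda'(P)$, with the canonical coordinates and $f^i$ exactly as you state. The one structural point you mis-anticipate --- it costs nothing, but it is precisely why the $BCD$ computation is shorter than the $A_n$ one --- is that here $A_{a,0,2}=0$ identically, so the tensors $P^{ki}_a$ vanish and the off-diagonal sum in \eqref{fc} is simply absent: no residue-theorem evaluation in the spirit of $m(q)$ ever occurs, and $c_i$ collapses to the purely diagonal expression $\bigl(Q^{ii}_2-\lambda^i Q^{ii}_1\bigr)/\bigl(3(f^i)^2\bigr)$, from which $\frac1{12}$ at the ordinary critical points, and the anomalous values $\frac16$, $\frac1{24}$ at the root $P=0$ (where, as you correctly insist, the $B_n$/$C_n$ distinction is visible only in the $\epsilon^2$ coefficients), follow by direct evaluation. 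Your $D_n$ remarks also agree with the paper once notation is unwound: the paper's seemingly different $D_n$ formulas (e.g.\ its $R_{1,1}$) are your uniform ones rewritten for the degree-$n$ polynomial $\Lambda$ of \eqref{Lambda}, which equals $P\,\Lambda_D(P)$, rather than for $\Lambda_D$ itself; and the $\rho D^{-1}\rho$ tail enters only through $\Lambda(0)$-correction terms in $Q^{ii}_1$, $Q^{ii}_2$ that cancel in \eqref{fc}, confirming the caution you raise at the end.
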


Note that the rescaling
$$
\la\,\ ,\ \ra_{\g}\mapsto \kappa\, \la\,\ ,\ \ra_{\g}
$$
of the invariant bilinear form on $\g$ yields the rescaling of the central invariants (\ref{fc})
 of the
related Drinfeld - Sokolov bihamiltonian structure
$$
c_i\mapsto \kappa \, c_i,\quad i=1,\dots,n.
$$
So from the definition of the normalized bilinear
form (\ref{normalized}) and (\ref{zh-9}), (\ref{zh-7}), (\ref{zh-8}) and Theorem \ref{theor-a},
Theorem \ref{theor-bcd} it follows
the validity of the Theorem \ref{mainconj} for the cases $A_n, B_n, C_n, D_n$.
%Similarly,the validity of the Theorem \ref{thm-4-2} for the $G_2$ case follows from (\ref{zh-10}) of thenext section.

Before proceeding to the proof of the Theorem let us explain the rule of labeling of the central
invariants for the $B_n$ and $C_n$ cases. The reader may remember that the labeling of the central
invariants is in one-to-one correspondence with labeling of the canonical coordinates. It will be shown
below that the canonical coordinates for the bihamiltonian structure
\eqref{pen1-bcd}, \eqref{pen2-bcd} are defined as follows:
\beq\label{zh-3}
\begin{split}
&\lambda_i =\Lambda(r_i^2), \quad i=1, \dots, n\\
&
\frac{d}{dp} \Lambda(p^2) |_{p=r_i}=0.
\end{split}
\eeq
For $B_n$ and $C_n$ cases $r_n=0$ is always a critical point of $\Lambda(p^2)$. The associated critical value $\lambda_n=\Lambda(0)$ ``breaks the symmetry" between the canonical coordinates; the corresponding central invariant $c_n$ differs from others.

\begin{proof}
The derivation of the dispersionless Poisson structures \eqref{pen1-bcd}, \eqref{pen2-bcd} follows the
lines of the proof of Theorem \ref{theor-a}. We will omit this part of the proof, and proceed directly
to computation of the central invariants.

Some part of the computation can be done uniformly for all the three types of Lie algebras. To this end we introduce the symbol
\begin{equation}\label{symb-nu}
\lm(p)=p^{2n+1-\nu}+\sum_{i=1}^n u_i(x) p^{2i-1-\nu}
\end{equation}
and also
\begin{equation}
f(p)=\sum_{i\ge1} \frac{a_i(x)}{p^{2i-\nu}},\ g(p)=\sum_{i\ge1} \frac{b_i(x)}{p^{2i-\nu}}
\label{symb-ab1}
\end{equation}
(we plan to still use the linear functionals \eqref{linear}). Recall that $\nu=0,1,2$ for $B_n$, $C_n$ and $D_n$ respectively.
The symbols of the pseudo-differential operators $X$ and $Y$ read
\begin{gather}
\hat{X}(p)=f(p)+\frac12\sum_{k\ge1}\frac{\e^k}{k!}\p^k_p\p^k_x f(p),\
\hat{Y}(p)=g(p)+\frac12\sum_{k\ge1}\frac{\e^k}{k!}\p^k_p\p^k_x g(p).\label{sym-XY}
\end{gather}
We omit the derivatives of $u_i$ w.r.t. $x$ in $\hat{L}(p)$ just like in the previous section.

By using the same method used in the proof of Theorem \ref{theor-a}, we obtain the coefficients
$A_{2,0,1}$, $A_{2,0,2}$ and $A_{2,0,3}$ in the expansion \eqref{lxly}
\begin{align}
A_{2,0,1}=&\frac{\lm'(q)\lm(p)-\lm'(p)\lm(q)}{q-p},\ A_{2,0,2}=0, \nn\\
A_{2,0,3}=&\frac{\lm'(q)\lm(p)-\lm'(p)\lm(q)}{2(q-p)^3}-
\frac{\lm''(q)\lm(p)-2\lm'(q)\lm'(p)+\lm''(p)\lm(q)}{4(q-p)^2} \nn\\
&+\frac{\lm'(q)\lm''(p)-\lm'(p)\lm''(q)}{4(q-p)}+\frac{\lm'''(q)\lm(p)-\lm'''(p)\lm(q)}{6(q-p)}. \label{a13}
\end{align}

Now let $P$, $Q$ be two complex numbers such that $|P|<|p|^2$ and $|Q|<|q|^2$.
Define the functions $a_i(x)$, $b_i(x)$ as in \eqref{symb-ab1}
from the following expanions
\[f(p)=\frac{p^{\nu}}{p^2-P}\delta(x-y)=\sum_{k=1}^\infty \frac{P^{k-1}}{p^{2k-\nu}}\delta(x-y),\
g(q)=\frac{q^{\nu}}{q^2-Q}\delta(x-z).\]
Then $\ell_X=\LM(y,P)-P^n$, $\ell_Y=\LM(z,Q)-Q^n$, where
\begin{equation}
\LM(y,P)=P^{n}+u_n(y)P^{n-1}+\cdots+u_1(y). \label{Lambda}
\end{equation}
The second Poisson bracket between the linear functionals now read
\[\{\LM(y,P), \LM(z,Q)\}_2=\sum_{k,s\ge0}\e^{k+s-1}\delta^{(s)}(y-z)
\left[\oint\frac{dp}{2\pi i}\oint\frac{dp}{2\pi i} \frac{(p\,q)^{\nu} A_{2,k,s}(p,q,y)}{(p^2-P)(q^2-Q)}\right].\]
Denote by $R_{2,1}$ the coefficient of $\e^0\,\delta'(y-z)$. It is easy to obtain
\begin{equation}
R_{2,1}=2\frac{P\,\LM'(P)\,\LM(Q)-Q\,\LM'(Q)\,\LM(P)}{P-Q}. \label{r21}
\end{equation}
Here $\LM(P)=\LM(y,P)$, $\LM(Q)=\LM(y,Q)$, and the primes stand for differentiations w.r.t. $P$ or $Q$.
Then by definition one can obtain the coefficient of $\e^0\,\delta'(y-z)$ in $\{\LM(y,P), \LM(z,Q)\}_1$
denoted by $R_{1,1}$
\begin{align}
B_n, C_n:&\quad R_{1,1}=2\frac{P\,\LM'(P)-Q\,\LM'(Q)}{P-Q}, \label{r11-bc}\\
D_n:&\quad R_{1,1}=2\frac{P\,Q(\LM'(P)-\LM'(Q))+P \LM(Q)-Q\LM(P)}{P-Q}. \label{r11-d}
\end{align}

Denote the coefficients of $\e^2\,\delta'''(y-z)$ in $\{\LM(y,P), \LM(z,Q)\}_\alpha$ by $R_{\alpha,3}$.
After a lengthy computation, we obtain
\begin{align}
&B_n: \quad R_{2,3}=\nn\\
&\frac{(P+Q)^2(\LM'(P)\,\LM(Q)-\LM'(Q)\,\LM(P))}{(P-Q)^3}+
4\frac{P^2\,\LM'''(P)\,\LM(Q)-Q^2\,\LM'''(Q)\,\LM(P)}{3(P-Q)} \nn\\
&+2\frac{P\,Q(\LM'(P)\,\LM''(Q)-\LM'(Q)\,\LM''(P))}{P-Q}+2\frac{P\,\LM''(P)\,\LM(Q)-Q\,\LM''(Q)\,\LM(P)}{P-Q} \nn\\
&+3\LM'(P)\,\LM'(Q)-2\frac{P\,Q(\LM''(P)\,\LM(Q)-2\LM'(P)\,\LM'(Q)+\LM(P)\,\LM''(Q))}{(P-Q)^2}. \label{zh-5}\\
&R_{1,3}=\frac{(P+Q)^2(\LM'(P)-\LM'(Q))}{(P-Q)^3}+4\frac{P^2\,\LM'''(P)-Q^2\,\LM'''(Q)}{3(P-Q)} \nn\\
&+2\frac{P\,\LM''(P)-Q\,\LM''(Q)}{P-Q}-2\frac{P\,Q(\LM''(P)+\LM''(Q))}{(P-Q)^2}.\label{zh-6}
\end{align}

\begin{align}
&C_n: \quad R_{2,3}=\frac{(P^2+6\,P\,Q+Q^2)(\LM'(P)\,\LM(Q)-\LM'(Q)\,\LM(P))}{2(P-Q)^3} \nn\\
&+4\frac{P^2\,\LM'''(P)\,\LM(Q)-Q^2\,\LM'''(Q)\,\LM(P)}{3(P-Q)}+2\frac{P\,Q(\LM'(P)\,\LM''(Q)-\LM'(Q)\,\LM''(P))}{P-Q} \nn\\
&+\frac{P\,\LM''(P)\,\LM(Q)-Q\,\LM''(Q)\,\LM(P)}{P-Q}+\LM'(P)\,\LM'(Q) \nn\\
&-2\frac{P\,Q(\LM''(P)\,\LM(Q)-2\LM'(P)\,\LM'(Q)+\LM(P)\,\LM''(Q))}{(P-Q)^2}. \\
&R_{1,3}=\frac{(P^2+6\,P\,Q+Q^2)(\LM'(P)-\LM'(Q))}{2(P-Q)^3}+4\frac{P^2\,\LM'''(P)-Q^2\,\LM'''(Q)}{3(P-Q)} \nn\\
&+\frac{P\,\LM''(P)-Q\,\LM''(Q)}{P-Q}-2\frac{P\,Q(\LM''(P)+\LM''(Q))}{(P-Q)^2}.
\end{align}

\begin{align}
&D_n: \quad R_{2,3}=4\frac{P\,Q(\LM'(P)\,\LM(Q)-\LM'(Q)\,\LM(P))}{(P-Q)^3}+ \nn\\
&+4\frac{P^2\,\LM'''(P)\,\LM(Q)-Q^2\,\LM'''(Q)\,\LM(P)}{3(P-Q)}+2\frac{P\,Q(\LM'(P)\,\LM''(Q)-\LM'(Q)\,\LM''(P))}{P-Q} \nn\\
&-\LM'(P)\,\LM'(Q)-2\frac{P\,Q(\LM''(P)\,\LM(Q)-2\LM'(P)\,\LM'(Q)+\LM(P)\,\LM''(Q))}{(P-Q)^2} \nn\\
&+\frac{P^2\,\LM'(P)\,\LM(Q)-Q^2\,\LM'(Q)\,\LM(P)}{P\,Q(P-Q)}-\LM(0)\frac{P\,\LM'(P)+Q\,\LM'(Q)}{P\,Q}. \\
&R_{1,3}=\frac{4 P\,Q(P\,\LM'''(P)-Q\,\LM'''(Q))}{3(P-Q)}-2\frac{P\,Q(P\,\LM''(P)+Q\,\LM''(Q))}{(P-Q)^2}
\nn\\
&+\frac{4\,P\,Q(P^2\,\tLM'(P)-Q^2\,\tLM'(Q))}{(P-Q)^3}+\frac{P\,Q(\tLM'(P)-\tLM'(Q))}{P-Q}-\frac{\LM(0)(P+Q)}{P\,Q},
\end{align}
where
\beq\label{zh-4}
\tLM(P)=\LM(P)/P.
\eeq

Now we begin to compute the central invariants for the $B_n, C_n$ cases.
The formulae \eqref{r21} \eqref{r11-bc} show that in these two cases we have the same
dispersionless limit, so the corresponding Drinfeld - Sokolov bihamiltonian structures
have the same canonical coordinates. Let $r_1, \cdots, r_n$ be defined
as in (\ref{zh-3}). Then we have $\lm_n=u_1$ and
$\lm_1, \cdots, \lm_{n-1}$ are the critical values of $\LM(P)$.
{}From the formulae \eqref{r21} and \eqref{r11-bc}, one can see that $\lambda_1$, \dots, $\lambda_n$
can serve as the canonical coordinates of the Drinfeld - Sokolov bihamiltonian structures of $B_n$ and $C_n$ type.
Following the notations in \eqref{fc}, we have
\[B_n:\quad  f^i=2\,r_i\LM''(r_i),\quad f^n=2\LM'(0);\]
\[Q^{ii}_1=3 \LM''(r_i)+\frac{14}3 r_i \LM'''(r_i)+r_i^2 \LM''''(r_i),\quad Q^{nn}_1=3\LM''(0).\]
\[ Q^{ii}_2=r_i^2 \LM''(r_i)^2+\LM(r_i) Q^{ii}_1,\quad
Q^{nn}_2=2\LM'(0)^2+3\LM(0)\,\LM''(0);\]
\[ c_i=\frac{Q^{ii}_2-\LM(r_i)\,Q^{ii}_1}{3(f^i)^2}=\frac1{12},\quad
c_n=\frac{Q^{11}_2-\LM(0)\,Q^{11}_1}{3(f^1)^2}=\frac16.\]

\medskip

\[C_n: f^i=2\,r_i\LM''(r_i),\quad f^n=2\LM'(0)\]
\[ Q^{ii}_1=3 \LM''(r_i)+\frac{11}3 r_i \LM'''(r_i)+r_i^2 \LM''''(r_i),\quad
Q^{nn}_1=\frac32\LM''(0);\]
\[Q^{ii}_2=r_i^2 \LM''(r_i)^2+\LM(r_i) Q^{ii}_1,\quad
Q^{nn}_2=\frac12\LM'(0)^2+\frac32\LM(0)\,\LM''(0);\]
\[c_i=\frac{Q^{ii}_2-\LM(r_i)\,Q^{ii}_1}{3(f^i)^2}=\frac1{12},
\quad c_n=\frac{Q^{11}_2-\LM(0)\,Q^{11}_1}{3(f^1)^2}=\frac1{24}.\]
Here $i=1,\dots,n$.

To compute the central invariants for he $D_n$ case, we first rewrite the two Poisson brackets in terms of the symbol
 $\tLM$ defined in (\ref{zh-4}).
Let $\tilde{R}_{\alpha,k}$ be obtained from $R_{\alpha,k}$ of (\ref{zh-5}), (\ref{zh-6})
with $\LM$ replaced by $\tLM$.
Denote by $S_{\alpha,k}$ the coefficients of $\e^{k-1} \delta^{(k)}(y-z)$ in $\{\tLM(P,y), \tLM(Q,z)\}_\alpha$.
Then we have $S_{\alpha,1}=\tilde{R}_{\alpha,1}$, and
\[S_{2,3}=\tilde{R}_{2,3}-\LM(0)\frac{P\,\LM'(P)+Q\,\LM'(Q)}{P^2\,Q^2},\
S_{1,3}=\tilde{R}_{2,3}-\frac{\LM(0)(P+Q)}{P^2\,Q^2}.\]
Let $\lm_1,\cdots,\lm_n$ be defined by (\ref{zh-3}),
they are the critical values of the rational function $\tLM(P)$ and can serve as the
canonical coordinates of the Drinfeld - Sokolov bihamiltonian structure in the $D_n$ case.
So we have
\[D_n:\quad f^i=2\,r_i\tLM''(r_i),\
Q^{ii}_1=3 \tLM''(r_i)+\frac{14}3 r_i \tLM'''(r_i)+r_i^2 \tLM''''(r_i)-\frac{2\,\LM(0)}{r_i^3},\]
\[Q^{ii}_2=\tLM(r_i)Q^{ii}_1+r_i^2 \tLM''(r_i)^2,\ c_i=\frac{Q^{ii}_2-\tLM(r_i)\,Q^{ii}_1}{3(f^i)^2}=\frac1{12}.\]
The Theorem is proved.
\end{proof}

\section{The exceptional root systems}\label{except}

The hierarchies associated with the exceptional root systems have been systematically treated by V.Kac and M.Wakimoto in \cite{kw}.  They did not consider however the bihamiltonian structure of the exceptional hierarchies. In our computations we
will use
the approach of \cite{balog} based on the Dirac reduction procedure \cite{dirac}.
Let us consider the Poisson bracket $\pi_\g(I)$  on $\g^*$ evaluated at the point $I$ as a skew symmetric bilinear form
on
$$
\g \simeq T^*_I \g^*
$$
(cf. \eqref{stab}). The stabilizer ${\rm Ker}\, {\rm ad}_I$ of $I$ coincides with the kernel of this bilinear form. The quotient
$$
\g/{\rm Ker}\, {\rm ad}_I
$$
acquires a symplectic structure induced by $\pi_\g(I)$.
The projection
$$
\n \hookrightarrow \g \to \g/{\rm Ker}\, {\rm ad}_I
$$
realizes the
nilpotent subalgebra $\n$ as a Lagrangian subspace in the quotient. Let
\beq\label{ndual1}
\n_{\rm dual}\subset \h\oplus \n^-
\eeq
be a pullback of a complementary Lagrangian subspace of the image of $\n$ such that
\beq\label{ndual2}
\g={\rm Ker}\, {\rm ad}_I\oplus \n \oplus \n_{\rm dual}.
\eeq
A choice of $\n_{\rm dual}$ specifies the transversal subspace $V\subset \fb$ of \eqref{sub-v} by the equation
\beq\label{ndual3}
\la b, q^{\rm can}\ra_\g =0 \quad \forall \,b\in \n_{\rm dual}, \quad q^{\rm can}\in V.
\eeq
One can unify constraints \eqref{orbit} and \eqref{ndual3} by considering a system of equations for $q\in\g$:
\eqa\label{ndual4}
&&
\la a, q\ra_\g = \la a, I\ra_\g\quad \forall\, a\in \n
\nn\\
&&
\\
&&
\la b, q\ra_\g =0 \quad \forall\, b\in \n_{\rm dual}.
\nn
\eeqa
The solution
$$
q=I+q^{\rm can}
$$
determines the transversal slice $V$. The reduced Poisson bracket on\\ $q^{\rm can}$-valued loops can be obtained as follows. Let us choose a basis
$$
f_1, \dots, f_{2 m} \in \n\oplus\n_{\rm dual}, \quad 2 m=2\dim\n=\dim \g-n.
$$
Introduce two $2m  \times 2m$ matrices
\eqa\label{ndual5}
&&
P=(P_{ab}), \quad P_{ab}=-\la I+q^{\rm can}, [f_a, f_b]\ra_\g,
\nn\\
&&
\\
&&
Q=(Q_{ab}), \quad Q_{ab}=\la f_a, f_b\ra_\g.
\nn
\eeqa
By construction of $\n_{\rm dual}$ the matrix
$$
P|_{q^{\rm can}=0}=\pi_\g(I)|_{\n\oplus \n_{\rm dual}}
$$
does not degenerate. Consider matrix differential operator
\beq\label{ndual6}
M:= P +Q\,\e\, \p_x
\eeq
with coefficients depending on $q^{\rm can}$ (via $P$).
Note that the matrix of pairwise Poisson brackets of the constraints \eqref{ndual4}  is equal to
$$
\{ \la f_a, q(x)\ra_\g, \la f_b, q(y)\ra_\g\}=-\frac1{\e}\,M_{ab}\,\delta(x-y).
$$
The following statement was proved in \cite{feher}.

\begin{lem} The inverse $M^{-1}$ to \eqref{ndual6} is  a matrix valued differential operator of finite order with coefficients depending polynomially on $q^{\can}$, $q^{\can}_x$, \dots.
\end{lem}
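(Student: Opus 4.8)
The plan is to exploit the principal gradation \eqref{ts-1} of $\g$ to put $M$ in a form with an invertible constant leading part plus a strictly degree-raising remainder, and then to invert by a terminating geometric series. First I would choose the complementary Lagrangian subspace $\n_{\rm dual}$ of \eqref{ndual1}, \eqref{ndual2} to be homogeneous with respect to the principal gradation; this is possible because both $\n$ and the form $\pi_\g(I)(a,b)=-\la I,[a,b]\ra_\g$ are graded (the form pairs $\g^d$ with $\g^{1-d}$). I then take the basis $f_1,\dots,f_{2m}$ of $\n\oplus\n_{\rm dual}$ to consist of principal-degree eigenvectors, writing $d_a$ for the degree of $f_a$; all the $d_a$ lie in the bounded range $[-(h-1),h-1]$, which is the source of all the finiteness below. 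With this choice the matrices of \eqref{ndual5} obey selection rules: $Q_{ab}=\la f_a,f_b\ra_\g\neq 0$ only if $d_a+d_b=0$; the constant part $P^{(0)}:=P|_{q^{\can}=0}$, with $P^{(0)}_{ab}=-\la I,[f_a,f_b]\ra_\g$, is nonzero only if $d_a+d_b=1$; and the $q^{\can}$-dependent part $N_{ab}=-\la q^{\can},[f_a,f_b]\ra_\g$ is nonzero only if $d_a+d_b=-m_i\le -1$, where $q^{\can}=\sum_i u^i\gamma_i$, $\gamma_i\in\g^{m_i}$, so that $N$ is linear in the coordinates $u^i$.

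Reading each matrix as a linear map on $\bigoplus_a \mathbb{C}f_a$ graded by $d_a$, these rules say that $Q$ sends degree $\delta$ to degree $-\delta$, that $P^{(0)}$ sends degree $\delta$ to degree $1-\delta$, and that $N$ sends degree $\delta$ to degrees $\le -1-\delta$. Write $M=M_0+N$ with $M_0=P^{(0)}+Q\,\e\p_x$. The first step is to invert the constant-coefficient operator $M_0$. Since $P^{(0)}=\pi_\g(I)|_{\n\oplus\n_{\rm dual}}$ is nondegenerate by construction, I factor $M_0=P^{(0)}\bigl(1+A\,\e\p_x\bigr)$ with $A:=(P^{(0)})^{-1}Q$. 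Composing the two degree shifts ($\delta\mapsto-\delta$ then $\eta\mapsto 1-\eta$) shows that $A$ raises the grading by exactly $1$; as the degrees are bounded, $A$ is a nilpotent constant matrix, $A^{2h-1}=0$, and it commutes with $\p_x$. Hence
\beq
M_0^{-1}=\left(\sum_{k\ge 0}(-1)^k A^k (\e\p_x)^k\right)(P^{(0)})^{-1}
\eeq
is a genuine matrix differential operator of finite order with constant coefficients.

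The second step is to remove $N$. Writing $M=M_0\bigl(1+M_0^{-1}N\bigr)$, it suffices to show that $M_0^{-1}N$ is nilpotent as a graded operator, for then the Neumann series $(1+M_0^{-1}N)^{-1}=\sum_{k\ge 0}(-M_0^{-1}N)^k$ terminates and $M^{-1}=(1+M_0^{-1}N)^{-1}M_0^{-1}$ is a finite composition of finite-order differential operators. The nilpotency is again a degree count: $N$ sends degree $\delta$ to degrees $\le -1-\delta$, while every term of $M_0^{-1}$ sends degree $\gamma$ to degrees $\ge 1-\gamma$ (from $(P^{(0)})^{-1}$: $\gamma\mapsto 1-\gamma$, followed by the degree-raising $A^k$). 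Therefore $M_0^{-1}N$ raises the grading by at least $2$ at each application, and since the degrees lie in $[-(h-1),h-1]$ one gets $(M_0^{-1}N)^K=0$ as soon as $2K>2(h-1)$; the matrix-unit bookkeeping $E_{a_0a_1}E_{a_1a_2}\cdots$ forces the vanishing regardless of the (commuting or not) scalar coefficients. Because $N$ is linear in $q^{\can}$ and $M_0^{-1}$ has constant coefficients, every entry of the resulting finite sum for $M^{-1}$ is a differential operator whose coefficients are differential polynomials in $q^{\can},q^{\can}_x,\dots$, which is exactly the claim.

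The genuinely delicate point is the verification that the three selection rules combine to a net degree raising by $\ge 2$, and, underlying it, that $P^{(0)}$ is degree-homogeneous and invertible on each graded piece, i.e.\ that $\dim\bigl(\g^{\delta}\cap(\n\oplus\n_{\rm dual})\bigr)=\dim\bigl(\g^{1-\delta}\cap(\n\oplus\n_{\rm dual})\bigr)$ for every $\delta$; this is what guarantees that $(P^{(0)})^{-1}$ respects the grading with the shift $\gamma\mapsto 1-\gamma$. I expect this compatibility of the Lagrangian splitting \eqref{ndual2} with the gradation to be the main obstacle to a fully clean argument, and it is the one place where I would check the construction of $\n_{\rm dual}$ carefully (case by case for the exceptional algebras if necessary); once it is in place, the rest is the mechanical degree bookkeeping described above.
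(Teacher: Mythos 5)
Your argument is correct, but note that the paper itself gives no proof of this lemma at all: it simply defers to Feh\'er et al.\ \cite{feher}. So there is no in-paper argument to compare against; what you have written is essentially the mechanism used in that reference (selection rules from the principal gradation, the factorization $M=M_0\bigl(1+M_0^{-1}N\bigr)$ with $M_0=P^{(0)}+Q\,\e\,\p_x$, and a terminating Neumann series because every relevant matrix is a degree-shifting operator on a space with bounded grading). The degree bookkeeping itself — $Q:\delta\mapsto-\delta$, $P^{(0)}:\delta\mapsto 1-\delta$, the $q^{\can}$-dependent part lowering total degree by at least $1$, hence $M_0^{-1}N$ raising degree by at least $2$ — is sound, and the remark that the matrix-unit chains kill the products regardless of the noncommutativity of the scalar entries is exactly the right justification for nilpotency in the ring of matrix differential operators.

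The one place where you undersell your own proof is the final paragraph: the ``genuinely delicate point'' you flag is not delicate, and no case-by-case inspection of exceptional algebras is needed. Take $\n_{\rm dual}$ to be any graded complement of $\Ker\ad_I$ inside $\fb^-=\h\oplus\n^-$; this exists because $\ad_I$ is homogeneous of degree $-1$, so $\Ker\ad_I$ is a graded subspace of $\n^-$. Such a choice satisfies \eqref{ndual1} and \eqref{ndual2}, and its image in $\g/\Ker\ad_I$ is Lagrangian since $[\fb^-,\fb^-]\subset\n^-$ has no component in $\g^1$. Next, nondegeneracy of $P^{(0)}$ is automatic for \emph{any} complement of $\Ker\ad_I$: by invariance of the bilinear form, $\la I,[a,b]\ra_\g=\la [I,a],b\ra_\g$, so the radical of the form $\omega(a,b)=-\la I,[a,b]\ra_\g$ on $\g$ is exactly $\Ker\ad_I$, and its restriction to any complement is nondegenerate. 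Finally, the block-dimension equality $\dim U_\delta=\dim U_{1-\delta}$ (where $U=\n\oplus\n_{\rm dual}$ and $U_\delta$ is its degree-$\delta$ part) is a \emph{consequence} of these two facts, not an extra hypothesis: $P^{(0)}$ maps each $U_\delta$ into $U_{1-\delta}$, and an invertible operator with this property restricts to isomorphisms $U_\delta\to U_{1-\delta}$ (injectivity applied to $\delta$ and to $1-\delta$ gives both inequalities), whence $(P^{(0)})^{-1}$ shifts degrees by $\gamma\mapsto 1-\gamma$ as you need. With this inserted, your proof closes uniformly for all simple $\g$. Two minor points: your $N$ clashes with the paper's $N$ of \eqref{ndual7}, so rename it; and strictly speaking you prove the lemma for graded choices of $\n_{\rm dual}$, which is the situation actually used in the paper (e.g.\ $V=\Ker\ad I_+$ in Section \ref{except}), rather than for an arbitrary Lagrangian complement.
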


Let
\beq\label{gamai1}
\gamma^1, \dots, \gamma^n\in {\rm Ker}\, {\rm ad}\, I
\eeq
be a basis in the centralizer of $I$. Recall \cite{kostant} that this centralizer is a commutative subalgebra in $\n^-$ having generators only in the degrees $-m_1$, \dots, $-m_n$; the number of generators in the degree $-m_k$ is equal to the multiplicity of the exponent $m_k$.
The linear functions
of $q^{\rm can}\in V$ given by
\beq\label{gamai2}
u^i =\la \gamma^i, q^{\rm can}\ra_\g, \quad i=1, \dots, n
\eeq
define a system of coordinates on $V$.
Denote $\gamma_1$, \dots, $\gamma_n$ the dual basis in $V$,
\beq\label{basisg}
\la \gamma^i, \gamma_j\ra_\g =\delta^i_j, \quad \la f_a, \gamma_i\ra_\g=0, \quad i, j =1, \dots, n, \quad a=1, \dots, 2m
\eeq
so
\beq\label{basisg1}
q^{\can}=\sum_{i=1}^n u^i \gamma_i.
\eeq
Introduce the $n\times 2m$ matrix differential operator
\beq\label{ndual7}
N=(N^i_{ a})=(R^i_a + S^i_a \epsilon \p_x), \quad N^i_{a}=\e\,\la \gamma^i, f_a\ra_\g\, \p_x - \la q^{\rm can}, [\gamma^i, f_a]\ra_\g.
\eeq
Denote $N^\dagger$ the matrix of (formally) adjoint differential operators,
\beq\label{ndual8}
\left(N^\dagger\right)^a_{i} ={N^i_{a}}^\dagger, \quad i=1, \dots, n, \quad a=1, \dots, 2m.
\eeq
%Put also
%\beq\label{ndual10}
%L=(L^{ij})= -\la I+q^{\rm can}, [\gamma^i, \gamma^j]\ra_\g.
%\eeq
Then the matrix of the second reduced Poisson bracket is given by the formula
\beq\label{ndual9}
 \{ u^i(x), u^j(y)\}^{\rm red}_2 =- \frac1{\e}\left(N\, M^{-1} N^\dagger\right)^{ij}\, \delta(x-y).
\eeq
The first reduced bracket is given by a similar formula
\beq\label{ndual11}
\begin{split}
&\{ u^i(x), u^j(y)\}_1^{\rm red}\\
=&\frac1{\e}\left(
N\, M^{-1} \tilde{M}\, M^{-1} N^\dagger+\tilde{N}\, M^{-1} \, N^\dagger
+ N\, M^{-1} \, \tilde{N}^\dagger\right)^{ij}\, \delta(x-y)
\end{split}
\eeq
where
the $n\times 2m$ and $2m\times 2m$ matrices  $\tilde{N}^i_a$ and $\tilde{M}_{ab}$ respectively are defined as follows:
\beq\label{perva}
%&&
%{\rm l}^{ij} =-\la \alpha, [\gamma^i, \gamma^j]\ra_\g
%\nn\\
\tilde{N}^i_a = \la \alpha, [\gamma^i, f_a]\ra_\g,\ \tilde{M}_{ab} = \la \alpha, [f_a, f_b]\ra_\g,
\eeq
where $\alpha\in\n$ is the generator of the center of $\n$ chosen above (see \eqref{centr}). We will see below that the terms of order $\e^{-1}$ disappear from \eqref{ndual9}, \eqref{ndual11}.

Let us now explain how we compute the Frobenius structure and the central invariants using the formula \eqref{ndual9}. For the second metric $g_2^{ij}$ one obtains
\beq\label{dir1}
\left(g_2^{ij}(q^{\can}) \right)= R\, P^{-1} Q\,P^{-1} R^T-S\,P^{-1}R^T+R\,P^{-1} S^T
\eeq
where $R^T, S^T$ denotes their transposed matrices.
The matrices $\left(A_{1,0; 2}^{ij}\right)$ and $\left(A_{2,0; 2}^{ij}\right)$ have the following form:
\eqa
&&
\left(A_{1,0; 2}^{ij}(q^{\can})\right) =-R\, P^{-1}Q\,P^{-1} Q\, P^{-1} R^T-R\,P^{-1}Q\,P^{-1}S^T
\nn\\
&& +S\,P^{-1}Q\,P^{-1}R^T+S\,P^{-1} S^T,\label{dir2}
\eeqa
\eqa
&&
\left(A_{2,0; 2}^{ij}(q^{\can})\right) =R\,P^{-1}Q\,P^{-1}Q\,P^{-1}Q\,P^{-1}R^T-R\,P^{-1}Q\,P^{-1}Q\,P^{-1}S^T
\nn\\
&&+
S\,P^{-1}Q\,P^{-1}Q\,P^{-1}R^T+S\,P^{-1}Q\,P^{-1}S^T,\label{dir3}
\eeqa
where the matrices $R=(R^i_a)$, $S=(S^i_a)$ are defined in \eqref{ndual7}. Doing the shift
\beq\label{ctn}
q^{\can}\mapsto q^{\can}+\lambda\,\alpha, \quad \alpha\in \{{\rm the\ center\ of}\ \n\}
\eeq
one obtains in \eqref{dir1} - \eqref{dir3} linear functions in $\lambda$. The coefficients of $\lambda$ of these functions give
the matrices $g_1^{ij}$, $A_{1,0; 1}^{ij}(q^{\can})$ and $A_{2,0; 1}^{ij}(q^{\can})$ respectively.

The dual bases $\gamma^i\in {\rm Ker} \ad I$ and $\gamma_i\in V$ can be chosen as follows.  According to \cite{kostant} the triple
\beq\label{sl1}
I_-:=I, \quad \rho=\sum_{i=1}^n\omega_i, \quad I_+ =\sum_{i=1}^n {a_i} \, X_i
\eeq
defines an embedding of the $sl_2$ Lie algebra into $\g$,
\beq\label{sl0}
[I_+, I_-]=2\rho, \quad [\rho, I_\pm ] =\pm I_\pm.
\eeq
Here $\omega_1, \dots, \omega_n\in \h$ are the fundamental weights, i.e.
the basis dual to the basis of simple roots, and
the integer coefficients $a_1, \dots, a_n$ are defined from the decomposition
\beq\label{sl2}
2\rho =\sum_{i=1}^n a_i H_i.
\eeq
We put
\beq\label{sl3}
V:={\rm Ker}\ad I_+.
\eeq
We choose
\eqa\label{sl4}
&&
\gamma_i\in {\rm Ker} \ad {I_+}\cap \g^{m_i}, \quad i=1, \dots, n
\nn\\
&&
\\
&&
\gamma^i\in {\rm Ker} \ad {I_-}\cap \g^{-m_i}, \quad i=1, \dots, n
\nn
\eeqa
For all exceptional Lie algebras the vectors $\gamma_i$ and $\gamma^i$ are determined uniquely up to normalization. We can normalize them in such a way that
$$
\la\gamma^i, \gamma_j\ra_\g =\delta^i_j.
$$

Denote
\beq\label{dir4}
p(z; q^{\can})=\det \left[ g^{ij}_2 (q^{\can})-z\, g^{ij}_1 (q^{\can})\right]
\eeq
the characteristic polynomial of the pair of quadratic forms
$g_2^{ij}$, $g_1^{ij}$. The roots $z^1(q^{\can})$, \dots, $z^n(q^{\can})$ will be used as the canonical coordinates of the pair of metrics: in these coordinates both metrics become diagonal:
\beq\label{dir5}
\begin{split}
&\sum_{k, l=1}^n \left(\frac{\p p(z; q^{\can})}{\p u^k}\right)_{z=z^i}
\left(\frac{\p p(z; q^{\can})}{\p u^l}\right)_{z=z^j} g^{kl}_1(q^{\can})=0, \quad i\neq j,\\
&
\sum_{k, l=1}^n \left(\frac{\p p(z; q^{\can})}{\p u^k}\right)_{z=z^i}
\left(\frac{\p p(z; q^{\can})}{\p u^l}\right)_{z=z^j} g^{kl}_2(q^{\can})=0, \quad i\neq j.
\end{split}
\eeq
Here we used the implicit function theorem formula
\beq
\frac{\p z^i(q^{\can})}{\p u^k}=-\left(\frac1{p'(z; q^{\can})}\frac{\p p(z; q^{\can})}{\p u^k}\right)_{z=z^i},
\ p'(z; q^{\can}) =\frac{\p p(z; q^{\can})}{\p z}.\nn
\eeq
For the central invariants one obtains the following expressions:
\eqa\label{otvet}
&&
c_i=\frac13\left[ p'(z^i; q^{\can})\right]^2
\\
&&
\nn\\
&&\left.\times\frac{\sum_{k, l=1}^n \left(\frac{\p p(z; q^{\can})}{\p u^k}\right)
\left(\frac{\p p(z; q^{\can})}{\p u^l}\right)
\left(A^{kl}_{2,0;2}(q^{\can}) -z \, A^{kl}_{2,0;1}(q^{\can})\right)}
{\left[\sum_{k, l=1}^n \left(\frac{\p p(z; q^{\can})}{\p u^k}\right)
\left(\frac{\p p(z; q^{\can})}{\p u^l}\right) g^{kl}_1(q^{\can})\right]^2}\right|_{z=z^i}.
\nn
\eeqa

We will now consider the $G_2$, $F_4$ and $E_6 - E_8$ cases. The computer supported calculations for these Lie
algebras of types $F$ and $E$ use an explicit realization of the Chevalley basis \cite{bour1}.
The Chevalley bases we describe in this paper are generated by the computer
algebra system GAP \cite{GAP}. For convenience of the reader we give an explicit matrix realization of the bases choosing a faithful representation of the Lie algebra of the minimal dimension. The source program is as follows:

\begin{verbatim}
    PARAMETERS:
            X: The type of the Lie algebras
            r: The rank of the Lie algebras
            w: The fundamental weight with the minimum dimension
            f: The name of the file that stores the result

    PROGRAM:
            L:=SimpleLieAlgebra("X",r,Rationals);;
            V:=HighestWeightModule(L,w);;
            ll:=Basis(L);;
            vv:=Basis(V);;
            mm:=List(ll, x->MatrixOfAction(vv,x));;
            PrintTo("f",mm);;
\end{verbatim}
The parameter \verb|w| takes the following values:
\begin{align*}
F_4:& \quad [1,0,0,0]\\
E_6:& \quad [0,0,0,0,0,1]\\
E_7:& \quad [0,0,0,0,0,0,1]\\
E_8:& \quad [0,0,0,0,0,0,0,1].
\end{align*}
For the Lie algebras of type E, the results of the above program are
excatly what we need, while for that of type F, we have to modify the signs
of some base elements in order to satisfy our normalizing rules.

\begin{lem}
For the exceptional Lie simple Lie algebras of type $G_2$, $F_4$, $E_6$, $E_7$, $E_8$, the central
invariants of the corresponding Drinfeld - Sokolov bihamiltonian structures coincide with the
values listed in the table that is given at the end of Section \ref{principale}.
\end{lem}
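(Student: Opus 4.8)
The plan is to prove the Lemma by a direct, case-by-case computation built on the Dirac-reduction formulae \eqref{dir1}--\eqref{dir3} and the central-invariant formula \eqref{otvet} assembled above. First I would fix, for each of $G_2$, $F_4$, $E_6$, $E_7$, $E_8$, an explicit Chevalley basis of $\g$ (generated by GAP as described, with the sign corrections noted for type $F$) together with the principal gradation \eqref{ts-1}. From the data \eqref{sl1}--\eqref{sl2} I would build the principal $sl_2$ triple $(I_-,\rho,I_+)$, set $V=\Ker \ad I_+$ as in \eqref{sl3}, and determine the dual homogeneous bases $\gamma_i\in\Ker \ad I_+\cap\g^{m_i}$ and $\gamma^i\in\Ker \ad I_-\cap\g^{-m_i}$ of \eqref{sl4}, normalised by $\la\gamma^i,\gamma_j\ra_\g=\delta^i_j$. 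Choosing a basis $f_1,\dots,f_{2m}$ of $\n\oplus\n_{\rm dual}$ then yields the matrices $P,Q$ of \eqref{ndual5} and $R,S$ of \eqref{ndual7}, whose entries are at most linear polynomials in the coordinates $u^i=\la\gamma^i,q^{\can}\ra_\g$.

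The computation becomes finite by the same device used in the $A_n$ and $B_n$--$D_n$ cases: the central invariants involve only the four jet-independent tensors, so throughout I may treat $q^{\can}$ as constant. Then $P,Q,R,S$ are constant matrices, and the operator $M=P+Q\,\e\,\p_x$ of \eqref{ndual6} is inverted formally as the geometric series $M^{-1}=\sum_{k\ge0}(-P^{-1}Q\,\e\,\p_x)^k\,P^{-1}$, whose successive coefficients are precisely the finite matrix products appearing in \eqref{dir1}--\eqref{dir3}; these produce $g_2^{ij}$, $A^{ij}_{1,0;2}$ and $A^{ij}_{2,0;2}$. Performing the shift \eqref{ctn} and extracting the coefficient of $\lambda$ gives the first-bracket tensors $g_1^{ij}$, $A^{ij}_{1,0;1}$, $A^{ij}_{2,0;1}$. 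I would then form the characteristic polynomial $p(z;q^{\can})$ of \eqref{dir4}, whose roots $z^1,\dots,z^n$ are the canonical coordinates, and substitute everything into \eqref{otvet} to read off the $c_i$.

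To keep the calculation tractable I would exploit quasi-homogeneity: under the Euler grading with $\deg u^i=m_i+1$ and $\deg\e=-1$ all of $g^{ij}_a$ and $A^{ij}_{r,0;a}$ are quasi-homogeneous, so by \eqref{otvet} each $c_i$ is a quasi-homogeneous function of its canonical coordinate $z^i$; combined with the fact, guaranteed by Theorem \ref{thm51}(i), that $c_i$ depends on $z^i$ alone, this forces $c_i$ to be a single monomial in $z^i$. Evaluating \eqref{otvet} over a generic $q^{\can}$ with pairwise distinct roots then exhibits the exponent as zero and produces the numerical constants, which one matches against the table at the end of Section \ref{principale} after passing from the trace form used in the matrix realisation to the normalized form \eqref{normalized} (a rescaling of the pencil that multiplies the $c_i$ by the corresponding factor). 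For $G_2$ and $F_4$ this is light enough to be carried out and displayed explicitly. The main obstacle is the sheer size of the symbolic linear algebra for the $E$ series, where $2m=\dim\g-n$ reaches $240$ for $E_8$, so that inverting $M$ and forming the products in \eqref{dir1}--\eqref{dir3} must be delegated to the computer on the GAP-generated bases, with care taken that the chosen $q^{\can}$ yields a separable characteristic polynomial so that \eqref{otvet} applies.
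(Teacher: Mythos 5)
For $G_2$ and $F_4$ your plan is essentially the paper's own proof: a direct Dirac-reduction computation of the jet-independent tensors via \eqref{dir1}--\eqref{dir3}, followed by evaluation of the central invariants in canonical coordinates (the paper evaluates \eqref{fc} after passing through flat coordinates; your use of \eqref{otvet} is the same computation packaged differently). For $E_6$, $E_7$, $E_8$, however, you take a genuinely different route, and it is exactly the route the paper considers and rejects as impractical (``the computations become very involved''). The paper instead argues by comparison with the bihamiltonian structures of \cite{DZ1}: it first proves, by a general argument valid on any semisimple Frobenius manifold (expressing $f^i$, $P^{ki}_a$, $Q^{ii}_a$ through the rotation coefficients $\gamma_{ij}$ and the functions $\psi_{i1}$, which gives $P^{ki}_1=P^{ki}_2=0$ and makes $Q^{ii}_2-\lm_i Q^{ii}_1$ a universal constant multiple of $(f^i)^2$), that the structure defined by Theorems 1 and 2 of \cite{DZ1} has all central invariants equal to $\frac1{24}$; it then exhibits an explicit Miura-type transformation (the tensors $K^i_j$ listed for $E_6$ and in Appendices A, B) after which the coefficients of $\e^2\delta'''(x-y)$ of the reduced Drinfeld--Sokolov structure take precisely the \cite{DZ1} form built from the Frobenius potential. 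Since in these cases the central invariants are determined by the dispersionless limit together with those coefficients, this settles the $E$ series without ever passing to canonical coordinates. Note this comparison only works because the expected answer is one and the same constant for all $i$ (the simply-laced case); for $G_2$ and $F_4$, where the invariants differ, the direct computation you propose is the only available route and is the one the paper carries out.

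The concrete weakness in your plan is the final evaluation step for the $E$ series. Your reductions are sound (setting the jets to zero, the formal inversion $M^{-1}=\sum_{k\ge0}(-P^{-1}Q\,\e\,\p_x)^k P^{-1}$, and the quasi-homogeneity argument that, combined with Theorem \ref{thm51}(i), forces each $c_i$ to be constant so that a single evaluation suffices). But formula \eqref{otvet} must be evaluated at the roots $z^i$ of the characteristic polynomial \eqref{dir4}, which has degree $6$, $7$, $8$ for the $E$ cases; for a generic $q^{\can}$ these roots are not expressible in radicals, so ``evaluating \eqref{otvet} over a generic $q^{\can}$'' cannot be done by naive symbolic manipulation. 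You would have to work either numerically (which by itself does not prove the exact value $\frac1{24}$ without additional error control) or exactly in the splitting field of $p(z;q^{\can})$ over $\mathbb{Q}$ at a judiciously chosen point -- and this on top of symbolic linear algebra with matrices of size $2m=240$ over a polynomial ring in eight variables for $E_8$. Delegating this to a computer is an engineering claim, not a proof as written; it is precisely the obstruction the paper's Miura-comparison argument is designed to bypass.
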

\begin{proof} The lemma can be proved by a straightforward computation by using the
formula \eqref{fc} for the Lie algebras of $G_2$ and $F_4$ types.
For the E type case we can use the
formula \eqref{otvet} to compute the central invariants,
however the computations become very involved; so we use a different method based on a comparison of the
Drinfeld - Sokolov bihamiltonian structure with the one obtained in \cite{DZ1} (see below).

Since the central invariants do not depend on the choice
of $\al$ in \eqref{ctn}, in what follows we will fix $\al=\gamma_n$.

We first illustrate the procedure by considering the $G_2$
case\footnote{Explicit formulae for the $G_2$ bihamiltonian structure were obtained in the original paper \cite{DS}. In \cite{feher} they have been rederived using the Dirac reduction procedures.} in detail.

Let $X_i, H_i, Y_i\ (i=1,2)$ be a set of Weyl generators of the simple Lie algebra $\g$ of $G_2$ type,
whose Dynkin diagram is labelled as follow
\begin{center}
\setlength{\unitlength}{1mm}
\begin{picture}(16,7)
\put(1,5){\circle*{2}} \put(16,5){\circle*{2}} \put(0,0){$1$} \put(15,0){$2$}
\put(1,4){\line(1,0){15}} \put(1,5){\line(1,0){15}}
\put(1,6){\line(1,0){15}} \put(6,5){\line(3,1){5}}
\put(6,5){\line(3,-1){5}}
\end{picture}
\end{center}
We define a Chevalley basis of $\g$
\begin{align*}
&X_3=-[X_1, X_2],\ Y_3=[Y_1, Y_2], \\
&X_4=-[X_1, X_3]/2,\ Y_4=[Y_1, Y_3]/2, \\
&X_5=-[X_1, X_4]/3,\ Y_5=[Y_1, Y_4]/3, \\
&X_6=-[X_2, X_5],\ Y_6=[Y_2, Y_5].
\end{align*}
The normalized invariant bilinear form is given by
\begin{equation}\label{bf-G2}
\begin{split}
&\la X_1, Y_1\ra_{\g}=\la X_3, Y_3\ra_{\g}=\la X_4, Y_4\ra_{\g}=3,\\
&\la X_2, Y_2\ra_{\g}=\la X_5, Y_5\ra_{\g}=\la X_6, Y_6\ra_{\g}=1,\\
&\la H_1, H_1\ra_{\g}=6,\ \la H_1, H_2\ra_{\g}=-3,\ \la H_2, H_2\ra_{\g}=2.
\end{split}
\end{equation}

The elements $\rho, I_+$ read
\[\rho=3H_1+5H_2,\ I_+=6X_1+10X_2.\]
We choose a basis of $\Ker\ad I_+$
\[\gamma_1=\frac35 X_1+X_2,\ \gamma_2=X_6.\]
Then we can obtain the result of the Dirac reduction:
\begin{align*}
&\left(g_2^{ij}\right)=\left(\begin{array}{ll} -\frac{5 u_1}{7} & -\frac{15 u_2}{7} \\
-\frac{15 u_2}{7} & -\frac{768 u_1^5}{875}-\frac{1144}{525} u_2 u_1^2\end{array}\right),\
\left(g_1^{ij}\right)=\left(\begin{array}{ll} 0 & -\frac{15}{7} \\
-\frac{15}{7} & -\frac{1144 u_1^2}{525}\end{array}\right),\\
&\left(A_{2,0,2}^{ij}\right)=\left(\begin{array}{ll} \frac{25}{14} & 0 \\
 0 & \frac{42152 u_1^4}{13125}+\frac{62 u_2 u_1}{21}\end{array} \right),\
\left(A_{2,0,1}^{ij}\right)=\left(\begin{array}{ll} 0 & 0 \\ 0 & \frac{62 u_1}{21}\end{array}\right),
\end{align*}
and $A_{1,0,2}^{ij}=A_{1,0,1}^{ij}=0.$

If we introduce the flat coordinates
\[t_1=u_2-\frac{572 u_1^3}{3375},\ t_2=-\frac{7 u_1}{15},\]
the above metrics are just the flat pencil defined by the following Frobenius manifold
\[F=\frac{1}{2}t_1^2t_2+\frac{24}{35}t_2^7,\ E=t_1\frac{\p}{\p t_1}+\frac13t_2\frac{\p}{\p t_2}.\]

In the flat coordinates, we have
\begin{align*}
&\left(g_2^{ij}\right)=\left(\begin{array}{ll} 48 t_2^5 & t_1 \\ t_1 & \frac{t_2}{3}\end{array}\right),\
\left(g_1^{ij}\right)=\left(\begin{array}{ll} 0 & 1 \\ 1 & 0\end{array}\right),\\
&\left(A_{2,0,2}^{ij}\right)=\left(\begin{array}{ll} -\frac{310 t_2}{49} & 0 \\ 0 & 0\end{array}\right),\
\left(A_{2,0,1}^{ij}\right)=\left(\begin{array}{ll} 88 t_2^4-\frac{310 t_1 t_2}{49} & \frac{286 t_2^2}{147}
\\ \frac{286 t_2^2}{147} & \frac{7}{18}\end{array}\right)
\end{align*}

The canonical coordinates are
\[\lambda_1=t_1+4t_2^3,\ \lambda_2=t_1-4t_2^3,\]
from which we can compute the quantities appeared in the formula \eqref{fc}
\begin{align*}
&f^1=24 t_2^2,\ f^2=-24 t_2^2,\\
&Q_2^{11}=\frac{9344 t_2^4}{49}-\frac{310 t_1 t_2}{49},\ Q_2^{22}=\frac{4768 t_2^4}{49}
-\frac{310 t_1 t_2}{49},\\
&Q_1^{11}=-\frac{310 t_2}{49},\ Q_1^{22}=-\frac{310 t_2}{49}.
\end{align*}
So the central invariants are given by
\[c_1=\frac{Q_2^{11}-\lambda_1Q_1^{11}}{3(f^1)^2}=\frac1{8},\ c_2=\frac{Q_2^{22}-\lambda_2Q_1^{22}}{3(f^2)^2}=\frac1{24}.\]

\noindent{\bf The $F_4$ case}

The root system of type $F_4$
contains $24$ positive roots, it's not convenient to define
the Chevalley basis explicitly, so we use an alternative way below to describe this basis.

The simple Lie algebra of type $F_4$ has a $26$-dimensional matrix realization, whose Weyl generators are
\begin{align*}
X_1=&e_{1,2}+e_{6,8}+e_{7,10}+e_{9,12}+2e_{11,13}+e_{11,14}\\
&\quad+e_{13,16}+e_{15,18}+e_{17,20}+e_{19,21}+e_{25,26},\\
X_2=&e_{4,5}+e_{6,7}+e_{8,10}-e_{17,19}-e_{20,21}-e_{22,23},\\
X_3=&e_{2,3}-e_{4,6}-e_{5,7}-e_{9,11}-e_{12,13}-2e_{12,14}\\
&\quad-e_{14,15}-e_{16,18}+e_{20,22}+e_{21,23}-e_{24,25},\\
X_4=&e_{3,4}-e_{7,9}-e_{10,12}+e_{15,17}+e_{18,20}+e_{23,24},
\end{align*}
\begin{align*}
Y_1=&e_{2,1}+e_{8,6}+e_{10,7}+e_{12,9}+e_{13,11}+2e_{16,13}\\
&\quad+e_{16,14}+e_{18,15}+e_{20,17}+e_{21,19}+e_{26,25},\\
Y_2=&e_{5,4}+e_{7,6}+e_{10,8}-e_{19,17}-e_{21,20}-e_{23,22},\\
Y_3=&e_{3,2}-e_{6,4}-e_{7,5}-e_{11,9}-e_{14,12}-e_{15,13}\\
&\quad-2e_{15,14}-e_{18,16}+e_{22,20}+e_{23,21}-e_{25,24},\\
Y_4=&e_{4,3}-e_{9,7}-e_{12,10}+e_{17,15}+e_{20,18}+e_{24,23}.
\end{align*}
These generators correspond the following Dynkin diagram
\begin{center}
\setlength{\unitlength}{1mm}
\begin{picture}(47,7)
\put(1,5){\circle*{2}} \put(16,5){\circle*{2}} \put(31,5){\circle*{2}} \put(46,5){\circle*{2}}
\put(0,0){$1$} \put(15,0){$3$} \put(30,0){$4$} \put(45,0){$2$}
\put(1,5){\line(1,0){15}} \put(16,6){\line(1,0){15}} \put(16,4){\line(1,0){15}} \put(31,5){\line(1,0){15}}
\put(27,5){\line(-3,1){5}}
\put(27,5){\line(-3,-1){5}}
\end{picture}
\end{center}
The normalized Killing form can be computed by the following formula
\[\la A, B \ra_{\g}=\frac16\tr(A\,B).\]

Let $\alpha_i$ be the simple root corresponding to $X_i$, $i=1, \cdots, 4$.
For any positive root $\beta \in \Phi^+$ of the form
\[\beta=\sum_{i=1}^4n_i\,\alpha_i, \mbox{ where } n_i\ge0,\ i=1, \cdots, 4,\]
we define $X_{\beta}=X_{n_1, \cdots, n_4}$ (resp. $Y_{\beta}=Y_{n_1, \cdots, n_4}$) to be the matrix
in the root space $\g_\beta$ (resp. $\g_{-\beta}$) such that the first nonzero element of the first
nonzero row (resp. column) is equal to $1$. Since $\dim\g_{\pm\beta}=1$, $X_\beta, Y_\beta$ are fixed in this
way uniquely. By a straightforward calculation, one can show that
$$
\{H_i, X_\beta, Y_\beta\, |\, i=1,\dots, 4,\, \beta\in\Phi^+\}
$$
form a Chevalley basis, and the element $\rho$ is given by
\[\rho=\frac12\sum_{\beta\in\Phi^+}[X_\beta, Y_\beta].\]

The element $I_+$ now reads
\[I_+=16\,X_1+22\,X_2+30\,X_3+42\,X_4.\]
We fix a basis $\{\gamma_i\}_{i=1}^4$ of $V=\Ker\ad {I_+}$ as follows:
\begin{align*}
\gamma_1=&X_{0001}+\frac{5}{7}X_{0010}+\frac{11}{21}X_{0100}+\frac{8X_{1000}}{21},\ \gamma_4=X_{2243},\\
\gamma_2=&X_{0122}-\frac{8}{21}X_{1121}+\frac{128}{231}X_{2021},\ \gamma_3=X_{2122}+\frac{15}{8}X_{1132}.
\end{align*}

By using the formulae given at the beginning of the present section, we can compute the reduced Poisson brackets w.r.t.
the above basis. To present the result, we introduce the following flat coordinates
\begin{align*}
t_1=&u_4-\frac{762841u_1^6}{49009212}-\frac{129973u_2u_1^3}{259308}-\frac{2783u_3u_1^2}{3528}-\frac{56741u_2^2}{142296},\\
t_2=&\frac{1781u_1^4}{64827}+\frac{34u_2u_1}{231}+u_3,\ t_3=\frac{4199u_1^3}{63504}+\frac{4199u_2}{3696},\
t_4=-\frac{13u_1}{42}.
\end{align*}
Then the two metrics given by the coefficients of the leading terms of the
reduced Poisson brackets correspond to the flat pencil of metric of the Frobenius manifolds with potential
\begin{align*}
F=&\frac{1}{2}t_1^2t_4+t_1t_2t_3+\frac{20736t_4^{13}}{143}+\frac{82944t_3^2t_4^7}{2527}+\frac{1083}{20}t_2^2t_4^5\\
&+\frac{288}{19}t_2t_3^2t_4^3+\frac{27648t_3^4t_4}{130321}+\frac{6859t_2^3t_4}{1152},
\end{align*}
its Euler vector field is
\[E=\sum_{i=1}^4 E^i\frac{\p}{\p t_1}=t_1 \frac{\p}{\p t_1}+\frac{2t_2}{3}\frac{\p}{\p t_2}
+\frac{t_3}{2}\frac{\p}{\p t_3}+\frac{t_4}{6}\frac{\p}{\p t_4}.\]
In the coordinates $t_i$, the first metric $g_1^{ij}$ given by the coefficients of the leading terms of
the first Poisson structure has the standard expression \cite{tani}
\beq\label{metric-g1}
(g^{ij}_1)=(\eta_{ij})^{-1},\ \eta_{ij}=\p_{t_i}\p_{t_j} Lie_e F,
\eeq
where the unity vector field $e$ is given by
\beq
e=\frac{\p}{\p t_1}.
\eeq
The second metric $g^{ij}_2$ satisfies the formula
\beq\label{metric-g2}
g^{ij}_2(t)=\sum_{m=1}^n E^m c^{ij}_m(t), \mbox{ with }c^{ij}_m(t)=g^{ik}_1 g^{jl}_1
\p_{t_m}\p_{t_k}\p_{t_l} F.
\eeq
Here $n=4$.

The coefficients $A^{ij}_{1,0;a}\,(a=1,2)$ of the reduced Poisson brackets are equal to zero.
The coefficients $A^{ij}_{2,0;2}$ read
{\scriptsize
\begin{align*}
A^{11}_{2,0,2}=&238464\, t_4^{10}-\frac{79854336\, t_3\, t_4^7}{4693}+\frac{362769128\, t_2\, t_4^6}{37349}
+\frac{82248768000\, t_3^2\, t_4^4}{13482989}\\
&+\frac{65740256\, t_1\, t_4^4}{371293}-\frac{286440}{247}\, t_2\, t_3\, t_4^3+\frac{6443534125\, t_2^2\, t_4^2}{2689128}
-\frac{53236224\, t_3^3\, t_4}{1694173}\\
&-\frac{4015872\, t_1\, t_3\, t_4}{54587}+\frac{1656}{19}\, t_2\, t_3^2+\frac{443\, t_1\, t_2}{26},\\
A^{12}_{2,0,2}=&-\frac{15818112\, t_4^8}{4693}+\frac{42634554624\, t_3\, t_4^5}{13482989}-\frac{6453151372\, t_2\, t_4^4}{21163701}
-\frac{51777792\, t_3^2\, t_4^2}{1694173}\\
&-\frac{28255104\, t_1\, t_4^2}{709631}+\frac{7349328\, t_2\, t_3\, t_4}{54587}+\frac{153\, t_2^2}{13},\\
A^{13}_{2,0,2}=&\frac{204693422\, t_4^7}{37349}-\frac{5205718984\, t_3\, t_4^4}{7054567}+\frac{9722937545\, t_2\, t_4^3}{5378256}
+\frac{3133152\, t_3^2\, t_4}{54587}\\&+\frac{79\, t_1\, t_4}{4}-\frac{3611\, t_2\, t_3}{312},\\
A^{14}_{2,0,2}=&\frac{16435064\, t_4^5}{1113879}+\frac{14507020\, t_3\, t_4^2}{709631}-\frac{2783\, t_2\, t_4}{312},\\
A^{22}_{2,0,2}=&\frac{13824\, t_4^6}{19}-\frac{3170304\, t_3\, t_4^3}{89167}+\frac{4883336\, t_2\, t_4^2}{125229}
+\frac{13824\, t_3^2}{6859}+\frac{2400\, t_1}{4693},\\
A^{23}_{2,0,2}=&-\frac{2508\, t_4^5}{13}+\frac{197596\, t_3\, t_4^2}{2197}+\frac{817\, t_2\, t_4}{312},\\
A^{24}_{2,0,2}=&\frac{39412\, t_4^3}{6591}-\frac{56\, t_3}{247},\
A^{34}_{2,0,2}=-\frac{2261\, t_4^2}{624},\ A^{44}_{2,0,2}=\frac{13}{24},
\end{align*}}
and the coefficients $A^{ij}_{2,0;1}$ is given by
\begin{equation}
A_{2,0;1}^{ij}(t)=\frac{\p}{\p {t}_1} A^{ij}_{2,0;2}(t). \label{a201}
\end{equation}

Now we begin to compute the central invariants. We first find the canonical coordinates from the characteristic equation
$\det(g^{ij}_2-\lambda g^{ij}_1)=0$. The roots can be represented in the form
\begin{align*}
\lambda_{\mu_1,\mu_2}=&\left(t_1+\frac{288}{19}t_3t_4^3\right)+\mu_1\left(\frac{57}{2}t_2t_4^2+\frac{288}{361}t_3^2\right)\\
& \quad +\mu_2\frac{\left(361t_2+\mu_1576t_3t_4+2736t_4^4\right)^{\frac32}}{228\sqrt{57}},
\end{align*}
where $\mu_1, \mu_2=\pm1$. We number them in the way that
\[\lambda_1=\lambda_{++},\ \lambda_2=\lambda_{+-},\ \lambda_3=\lambda_{-+},\ \lambda_4=\lambda_{--}.\]
We then compute the metrics $g_1$, $g_2$ and the functions
$A_{2,0;1}, A_{2,0;1}$ in the canonical coordinates. After a
straightforward computation, we obtain the central invariants from
the formula \eqref{fc}, they read
\[\{c_1, c_2, c_3, c_4\}=\{\frac1{24},\frac1{24},\frac1{12},\frac1{12}\},\]
which proves the lemma for the $F_4$ case.
\vskip 0.5cm

\noindent{\bf The $E_6$ case}

The proof of the lemma for the simple Lie algebras of $E$ types
are similar to that of the $F_4$ case. We take
$E_6$ for example. It has a $27$-dimensional matrix realization, the
Weyl generators are realized as
\begin{align*}
&X_1=e_{6, 7} + e_{8, 9} + e_{10, 11} + e_{12, 14} + e_{15, 17} + e_{26, 27},\\
&X_2=e_{4, 5} + e_{6, 8} + e_{7, 9} - e_{18, 20} - e_{21, 22} - e_{23, 24},\\
&X_3=e_{4, 6} + e_{5, 8} + e_{11, 13} + e_{14, 16} + e_{17, 19} + e_{25, 26},\\
&X_4=e_{3, 4} - e_{8, 10} - e_{9, 11} - e_{16, 18} - e_{19, 21} + e_{24, 25},\\
&X_5=e_{2, 3} - e_{10, 12} - e_{11, 14} - e_{13, 16} + e_{21, 23} + e_{22, 24},\\
&X_6=e_{1, 2} + e_{12, 15} + e_{14, 17} + e_{16, 19} + e_{18, 21} + e_{20, 22},
\end{align*}
and $Y_i={X_i}^T,\ i=1, \cdots, 6$.
The Dynkin diagram for these generators are given by
\begin{center}
\setlength{\unitlength}{1mm}
\begin{picture}(62,25)
\put(1,5){\circle*{2}} \put(16,5){\circle*{2}}
\put(31,5){\circle*{2}} \put(46,5){\circle*{2}}
\put(61,5){\circle*{2}} \put(31,20){\circle*{2}}
\put(1,5){\line(1,0){15}} \put(16,5){\line(1,0){15}} \put(31,5){\line(1,0){15}}
\put(46,5){\line(1,0){15}} \put(31,5){\line(0,1){15}}
\put(0,0){$1$} \put(30,22){$2$} \put(15,0){$3$} \put(30,0){$4$} \put(45,0){$5$} \put(60,0){$6$}
\end{picture}
\end{center}
The normalized Killing form is
\[\la A, B \ra_{\g}=\frac16\tr(A\,B).\]

The Chevalley basis is defined in the same way. The element $I_+$ reads
\[I_+=16\,X_1+22\,X_2+30\,X_3+42\,X_4+30\,X_5+16\,X_6.\]
The basis $\{\gamma_i\}_{i=1}^6$ of $V=\Ker\ad {I_+}$ is chosen as
\begin{align*}
&\gamma_1=X_1+\frac{11}{8}X_2+\frac{15}{8}X_3+\frac{21}{8}X_4+\frac{15}{8}X_5+X_6,\\
&\gamma_2=X_{001111}-\frac{11}{15} X_{010111}+X_{101110}+\frac{11}{15}X_{111100},\\
&\gamma_3=X_{011111}-\frac{21}{8} X_{011210}-\frac{16}{11}X_{101111}-X_{111110},\\
&\gamma_4=X_{011221}+\frac{8}{15}X_{111211}+X_{112210},\\
&\gamma_5=X_{111221}+X_{112211},\ \gamma_6=X_{122321}.
\end{align*}

The flat coordinates have the expressions
\begin{align*}
t_1=&\frac{5339887u_1^6}{84934656}+\frac{129973u_3u_1^3}{442368}-\frac{2783u_4u_1^2}{77760}\\
&+\frac{1679u_2^2u_1}{24300}+\frac{56741u_3^2}{1672704}+\frac{u_6}{81},\\
t_2=&\frac{4}{15}u_2u_1^2+\frac{2u_5}{27},\ t_3=\frac{33839u_1^4}{147456}+\frac{2261u_3u_1}{12672}-\frac{38u_4}{405},\\
t_4=&\frac{1547u_1^3}{9216}+\frac{221u_3}{528},\ t_5=\frac{52u_2}{135},\ t_6=\frac{13u_1}{8}.
\end{align*}
The potential of the corresponding Frobenius manifold is given by
\begin{align}
F=&-\frac{3^8}{2}\left(\frac{1}{2}t_1^2t_6+t_1t_2t_5+t_1t_3t_4+\frac{t_6^{13}}{185328}+\frac{1}{576}t_5^2t_6^8
+\frac{1}{252}t_4^2t_6^7\right.\nn\\
&+\frac{1}{60}t_3^2t_6^5+\frac{1}{24}t_4t_5^2t_6^5+\frac{1}{24}t_2^2t_6^4+\frac{1}{24}t_3t_5^2t_6^4
+\frac{1}{24}t_5^4t_6^3+\frac{1}{6}t_3t_4^2t_6^3\nn\\
&+\frac{1}{6}t_2t_4t_5t_6^3+\frac{1}{4}t_4^2t_5^2t_6^2+\frac{1}{2}t_2t_3t_5t_6^2+\frac{1}{12}t_4^4t_6
+\frac{1}{6}t_3^3t_6+\frac{1}{6}t_2t_5^3t_6\nn\\
&\left.+\frac{1}{2}t_3t_4t_5^2t_6+\frac{1}{2}t_2^2t_4t_6+\frac{1}{12}t_4t_5^4+\frac{1}{4}t_3^2t_5^2
+\frac{1}{2}t_2^2t_3+\frac{1}{2}t_2t_4^2t_5\right). \label{f-e6}
\end{align}
Note that the function $-\frac2{3^8} F(t)$ was obtained
as polynomial solutions of the WDVV equations associated to the root systems of type $E_6$ by
Di Francesco {\sl et al} in \cite{zuber}. Polynomial solutions to the WDVV equations associated to the root
systems of type $E_7$ and $E_8$ are also computed in \cite{zuber}.

The Euler vector field and the unity vector field have the forms
\eqa
&&E=\sum_{k=1}^6E^k\frac{\p}{\p t_k}\nn\\
 &&\quad =t_1\frac{\p}{\p t_1}+\frac34t_2\frac{\p}{\p t_2}+\frac23t_3\frac{\p}{\p t_3}
+\frac12t_4\frac{\p}{\p t_4}+\frac5{12}t_5\frac{\p}{\p t_5}+\frac16t_6\frac{\p}{\p t_6},\label{ev}\\
&&e=\frac1{81} \frac{\p}{\p t_1}.\label{uv}
\eeqa

The two flat metrics $g_1$, $g_2$ are expressed by the formulae given in \eqref{metric-g1}, \eqref{metric-g2}.
We will not write down the explicit expression of the functions $A_{2,0;1}^{ij}$, $A_{2,0;2}^{ij}$,
since in this case they are quite long.
As a consequence of this fact, the computation of the central invariants by using the formula \eqref{otvet}
becomes rather tedious. To avoid this complexity, we employ an alternative way to prove the result that
the central invariants of the Drinfeld - Sokolov bihamiltonian structure related to the $E_6$
(also for $E_7, E_8$) type simple Lie algebra are equal to $\frac1{24}$.

Our approach is to establish,
through an appropriate Miura type transformation, a relationship of the present bihamiltonian structure to
the one defined by a semisimple Frobenius manifold via the formulae of Theorem 1 and Theorem 2
of \cite{DZ1}. Then the needed result follows
if we can prove that the central invariants of the bihamiltonian structure given by
Theorem 1 and Theorem 2 of \cite{DZ1} are equal to $\frac1{24}$. This fact can be proved
by using properties of a semisimple Frobenius manifold. In fact, by using the formulae (3.9), (3.14), (3.15), (5.24)
of \cite{DZ1} we can express the functions $f^i$, $Q^{ii}_1$, $Q^{ii}_2$,
$P^{ki}_1, P^{ki}_2$ that appear in \eqref{fc}
 as follows:
\eqa
&&f^i=\frac1{\psi_{i1}^2},\quad P^{ki}_1=P^{ki}_2=0,\nn\\
&& Q_1^{ii}=\frac1{12} \sum_{j=1}^n \left(\frac{\gamma_{ij}}{\psi_{i1}^3 \psi_{j1}}+\frac{\gamma_{ij}\psi_{j1}}
{\psi_{i1}^5}\right),\nn\\
&&Q_2^{ii}=\frac1{24}\left[
\frac1{3\psi_{i1}^4}+2 \sum_{j=1}^n\left(\frac{\lm_i \gamma_{ij}}{\psi_{i1}^3 \psi_{j1}}+\frac{\lm_i
\gamma_{ij}\psi_{j1}}{\psi_{i1}^5}\right)\right].\nn
\eeqa
Here $n$ is the dimension of the semisimple Frobenius manifold, $\lm_1,\dots,\lm_n$ are its canonical coordinates,
the functions $\gamma_{ij}$ are the rotation coefficients of the flat metric of the Frobenius manifold,
and the functions $\psi_{i1}$ are defined by (4.5) of \cite{DZ1}. Note that in \cite{DZ1} we denote the
canonical coordinates by $u_1,\dots, u_n$. By plugging the above expressions into the formula \eqref{fc}
we immediately obtain the result $c_i=\frac1{24}, i=1,\dots, n$.

Now let us assume that the needed Miura type transformation has the form
\[\tilde{t}_i=t_i-\e^2 (\sum_{m} K^i_{m}\,t_{m,xx}+\sum_{k,l}M^{i}_{kl}\, t_{k,x} t_{l,x}),\ i=1,\dots 6,\]
A straightforward computation shows that there is a unique choice of the $(1,1)$ tensor $K^i_j$ with
the following nonzero components:
\begin{align*}
&K_3^1=\frac{92\, t_6}{247},\ K_4^1=\frac{1172287\, t_6^2}{2016846},\ K_5^1=\frac{3197\, t_5}{4056},\ K_6^4=\frac{17 \, t_6}{26},\\
&K_5^2=\frac{460\, t_6}{507},\ K_6^2=\frac{502\, t_5}{507},\ K_4^3=\frac{19}{39},\ K_6^3=\frac{47120\, t_6^2}{59319},\\
&K_6^1=\frac{2054383\, t_6^4}{60149466}+\frac{7521\, t_4\, t_6}{5746}+\frac{115\, t_3}{247}.
\end{align*}
such that in the new coordinates, the coefficients of $\e^2 \delta'''(x-y)$ of our reduced
bihamiltonian structure can be expressed, in terms of the potential
$F({\tilde{t}}\,)=\left.F(t)\right|_{t\mapsto \tilde t}$ given
in \eqref{f-e6}, by the following formulae of Theorem 1 and Theorem 2 of \cite{DZ1}:
\eqa
&&A^{ij}_{2,0;1}(\tilde{t}\,)=\frac{1}{12} \p_{t_k} (g_1^{kl} c^{ij}_l),\\
&&A^{ij}_{2,0;2}(\tilde{t}\,)=\frac{1}{12}\left(\p_{t_k} (g_2^{kl} c^{ij}_l)+\frac12\,c^{kl}_l c^{ij}_k\right),
\eeqa
where
$g^{ij}_1(\tilde{t}\,), g^{ij}_2(\tilde{t}\,), c^{ij}_k(\tilde{t}\,)$ are defined as in \eqref{metric-g1},
\eqref{metric-g2} by using the function $F(t)$ and the vector fields $\eqref{ev}, \eqref{uv}$, and then
replacing $t$ by $\tilde{t}$.

Since in the present case the central invariants are determined by the coefficients of $\e^2 \delta'''(x-y)$
of the bihamiltonian structure, the above Miura type transformation (with arbitrary chosen functions $M^i_{kl}$)
already establishes the fact that all the central invariants of the bihamiltonian structure that we are considering
are equal to $\frac1{24}$.

For the simple Lie algebra of type $E_7$, $E_8$, we give the relevant data
in the Appendix A and B, the notations are
in agreement with that of the above $E_6$ case.
We thus complete the proof of the lemma.
\end{proof}

\section{Conclusion}

In this paper, we compute the central invariants of the bihamiltonian structures of Drinfeld - Sokolov reduction
related to the affine Kac-Moody algebras of type $A_n^{(1)}$, $B_n^{(1)}$, $C_n^{(1)}$, $D_n^{(1)}$,
$G_2^{(1)}$, $F_4^{(1)}$, $E_{6,\, 7, \, 8}^{(1)}$ with the standard gradation which is given by the vertex $c_0$ in the (extended) Dynkin diagram.

For the standard gradations defined by another vertex, Drinfeld and Sokolov didn't give the bihamiltonian structures.
We point out that the generalized KdV equations for other standard gradations do possess bihamiltonian structures of
the form \eqref{bhs}, but in general these bihamiltonian structures have infinite many terms.
This is because all these equations
are related to the generalized mKdV equations through a Miura type transformation, while these transformations are
invertible in the formal power series sense. This fact has an immediate corollary that the central invariants of these
bihamiltonian structures are the same with the ones we have computed.

The generalized KdV equations related to the {\it twisted} affine Lie algebras seem not to possess a bihamiltonian structure.
We give here a counterexample.

Let us consider the generalized KdV equation related to $A_2^{(2)}$ equipped with the
standard gradation defined by the vertex $c_0$. The simplest integrable equation reads \cite{DS}
\begin{equation}\label{a22}
u_t=5\,u^2\,u_x+5\,\epsilon^2 (u_x\,u_{xx}+u\,u_{xxx})+\epsilon^4 u_{xxxxx}.
\end{equation}

\begin{prop} The equation \eqref{a22} possesses only one local Hamiltonian structure found in \cite{DS}
\begin{eqnarray}\label{a22-h}
&&
u_t=\{ u(x), H\}, \quad H=\int ( u^3-3 u_x^2)\, dx
\nn\\
&&
\\
&&
\{ u(x), u(y)\} = 2u(x) \delta'(x-y) +u_x \delta(x-y) +\frac{\epsilon^2}2 \delta'''(x-y).
\nn
\end{eqnarray}
\end{prop}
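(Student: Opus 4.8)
The plan is to pin the Poisson operator down directly from the two gradings carried by the equation and then to match it against the flow. First I would note that any Hamiltonian operator of the form \eqref{bhs0} automatically has degree $1$ in the standard gradation $\deg\p_x^ku=k$, $\deg\e=-1$; moreover equation \eqref{a22} is homogeneous under the extra scaling $S_c\colon u(x)\mapsto c^2u(cx)$, for which $\deg u=2$, $\deg\p_x=1$, $\deg\e=0$. Since $S_c$ is a symmetry of \eqref{a22}, it carries any local Hamiltonian structure into another one; decomposing a candidate operator into $S_c$-homogeneous components and using that \eqref{bhs0} already fixes the $\e$-degree, I would reduce the search to a skew-symmetric operator that is bihomogeneous, of operator weight $3$ under $S_c$.

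Writing a coefficient monomial with $r$ factors of $u$ in front of $\e^k\p_x^m$, the $\e$-degree constraint reads (derivatives)$+m=k+1$ and the weight constraint then gives $2r=2-k$, which has a nonnegative integer solution only for $k=0$ (then $r=1$, the unique skew first-order piece $\propto 2u\p_x+u_x$) and for $k=2$ (then $r=0$, the constant operator $\e^2\p_x^3$). Hence the only candidates are
\beq
\Pi=c_1\,(2u\,\p_x+u_x)+c_3\,\e^2\p_x^3 .
\eeq
Every such operator is a genuine Poisson bracket, being precisely the Virasoro/Magri bracket $(u(x)+u(y))\dl'(x-y)+c_3\,\e^2\dl'''(x-y)$, so the Jacobi identity imposes no further restriction and the whole two-parameter family survives.

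It then remains to decide for which ratio $c_3/c_1$ the flow \eqref{a22} is Hamiltonian. By the same count the variational derivative must be bihomogeneous of weight $4$ and $\e$-degree $0$, so $\dl H/\dl u=a\,u^2+b\,\e^2u_{xx}$, i.e. $H=\int(\tfrac a3u^3-\tfrac b2\e^2u_x^2)\,dx$. Substituting into $\Pi\,\dl H/\dl u=K$ and matching powers of $\e$ (normalising $c_1=1$, the overall scale being free): the $\e^0$ identity gives $5a\,u^2u_x=5u^2u_x$, so $a=1$; the $\e^2$ identity splits on the monomials $u\,u_{xxx}$ and $u_xu_{xx}$ into $b+c_3=\tfrac52$ and $b+6c_3=5$, whence $c_3=\tfrac12$, $b=2$; the remaining $\e^4$ identity reduces to $b\,c_3=1$ and is satisfied automatically. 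This overdetermined but consistent system has the unique solution $\Pi=2u\p_x+u_x+\tfrac12\e^2\p_x^3$, reproducing \eqref{a22-h}, all other admissible operators being obtained from it by the trivial rescaling $\Pi\mapsto\kappa\Pi$, $H\mapsto\kappa^{-1}H$. As there is no second, independent Poisson structure, \eqref{a22} admits no bihamiltonian representation.

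The step I expect to be the real obstacle is the reduction to bihomogeneous operators. A priori the leading term could be $g(u)\p_x+\tfrac12g'(u)u_x$ for arbitrary $g$, since the dispersionless equation $u_t=5u^2u_x$ is Hamiltonian for every choice of $g$; one must show that the specific dispersive tail of \eqref{a22}, together with the $\e$-degree-$1$ constraint built into \eqref{bhs0}, rigidifies $g$ to be linear in $u$ (equivalently, that the scaling symmetry $S_c$ can be used to replace $\Pi$ by an $S_c$-homogeneous structure without destroying the Hamiltonian property). I would discharge this either by the eigencomponent/averaging argument sketched above, or more defensively by running the order-by-order analysis with an unnormalised $g$ and checking that compatibility already at orders $\e^2$ and $\e^4$ forces $g''=0$.
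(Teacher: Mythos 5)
Your classification within the homogeneous, polynomial-in-$u$, weight-three sector is correct, and the matching computation fixing $c_3=\tfrac12$, $a=1$, $b=2$ is fine; but the reduction to that sector --- the step you yourself flag as the obstacle --- is a genuine gap, and neither of your proposed repairs closes it. First, the averaging/eigencomponent argument does not go through: the set of Hamiltonian structures of \eqref{a22} is not a linear space, and while invariance under the family $S_c$ does let you conclude that extremal weight components of $\Pi$ satisfy Jacobi, the condition ``there exists a local $H$ with $K=\Pi\,\delta H/\delta u$'' couples different weight components of $\Pi$ and of $H$ (one only gets relations of the form $\sum_{d+e=w}\Pi_d\,\delta H_e/\delta u=0$ or $=K$), so you cannot conclude that any single homogeneous component of $\Pi$ is by itself a Hamiltonian operator for the flow, let alone that $\Pi$ has only one component. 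Second, even granting homogeneity, your degree count silently assumes integer powers $u^r$ and total operator weight $3$. In the paper's class \eqref{bhs0} the coefficients are only $C^\infty$ in $u$, so homogeneous candidates of fractional weight, such as ones built from $\varphi(u)=u^{2/3}$ (weight $7/3$, perfectly smooth on a ball with $u>0$), are admissible and escape your list entirely.

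This is not a hypothetical loophole: it is exactly the case the paper has to work to exclude, and it falsifies your fallback plan as well. The paper's proof runs quite differently: it constructs the quasitriviality transformation \eqref{qmt-2} of \eqref{a22} explicitly through order $\epsilon^6$, invokes the result of \cite{LWZ, LZ2} that every local Hamiltonian structure of \eqref{a22} with coefficients polynomial in the jets is the image under \eqref{qmt-2} of a hydrodynamic bracket $\varphi(v)\delta'(x-y)+\tfrac12\varphi'(v)v_x\delta(x-y)$, and then demands cancellation of the jet-dependent denominators in the transformed bracket. The order-$\epsilon^4$ condition yields only the ODE $3u^2\varphi''-2u\varphi'+2\varphi=0$, i.e.\ $\varphi=c_1u+c_2u^{2/3}$; the term $u^{2/3}$ is killed only by the order-$\epsilon^6$ term. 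In particular, the transform of the $u^{2/3}$ bracket has polynomial coefficients and makes \eqref{a22} Hamiltonian through order $\epsilon^4$, so your ``defensive'' order-by-order analysis with general $g$ cannot force $g''=0$ at orders $\epsilon^2,\epsilon^4$; one must go to $\epsilon^6$. More fundamentally, a direct order-by-order analysis needs to know that an arbitrary local structure is determined by its dispersionless limit via \eqref{qmt-2} --- this is precisely the input supplied by \cite{LWZ, LZ2}, for which your proposal contains no substitute.
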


The proof was obtained in \cite{LWZ} following the scheme of \cite{LZ2}. Let us give here the sketch of the proof.
First, we construct the so-called {\it quasitriviality transformation} $v\mapsto u$,
\begin{align}
&u= v+\frac{\e^2}{4}\p_x^2\left(\log v_1-\log v\right)+\nn\\
&+\e^4
\p_x^2\left(\frac{32\,{{v}_2}}{{{v}}^2}-\frac{27\, {v}_1^2}{{{v}}^3}-\frac{12 \,{v_3}}{{{v}} {{v}_1}}+\frac{5\, {v_4}}{{v}_1^2}
+\frac{11\,{v}_2^2}{{{v}}
{v}_1^2}-\frac{21\, {v_3} \ {{v}_2}}{{v}_1^3}+\frac{16\, {v}_2^3}{{v}_1^4}
\right)\nn\\
&+\e^6\p_x^2\left[\frac{7533\,v_1^4}{4480 v^6}-\frac{43081\,{v_2}\,v_1^2}{13440 {v}^5}+\frac{2063\, {v_3} v_1}{1920 \, {v}^4}
+\frac{2077 \, v_2^2}{3360 {v}^4}-\frac{619\, {v_4}}{2240\,{v}^3}\right.\nn\\
&+\frac{1}{v_1}\left(\frac{239\, {v_2}
{v_3}}{1344 {v}^3}+\frac{41 {v_5}}{672 {v}^2}\right)-\frac{1}{v_1^2}\left(\frac{157 v_2^3}{1920 \ {v}^3}+\frac{1549
{v_4} {v_2}}{6720 {v}^2}+\frac{13 v_3^2}{80 \, {v}^2}+\frac{7
{v_6}}{640 {v}}\right)\nn\\
&+\frac{1}{v_1^3}\left(\frac{8753 {v_3}
v_2^2}{13440 \ {v}^2}+\frac{383 {v_5} {v_2}}{4480 {v}}+\frac{689
{v_3} {v_4}}{4480 \, {v}}+\frac{{v_7}}{384}\right)\nn\\
&-\frac{1}{v_1^4}\left(\frac{4303 \, v_2^4}{13440 {v}^2}+\frac{185
{v_4} v_2^2}{448 {v}}+\frac{2607 v_3^2 \, {v_2}}{4480
{v}}+\frac{21\,v_2 v_6}{640}+\frac{103 v_4^2}{2240}+\frac{159 \,
{v_3} {v_5}}{2240}\right) \nn\\
&+\frac{1}{v_1^5}\left(\frac{9343 \, {v_3} v_2^3}{6720
{v}}+\frac{1059 {v_5} v_2^2}{4480}+\frac{3819 {v_3} {v_4}
{v_2}}{4480}+\frac{177 v_3^3}{896}\right)\nn\\
&-\frac{1}{v_1^6}\left.\left(\frac{131 v_2^5}{210 {v}}+\frac{83}{70}
{v_4} \ v_2^3+\frac{2241}{896} v_3^2 v_2^2\right)
+\frac{59}{14} \frac{v_2^4 {v_3}}
{v_1^7}-\frac{5}{3} \frac{v_2^6}{v_1^8}\right] +O(\epsilon^8)
\label{qmt-2}
\end{align}
transforming any monotone solution of the dispersionless equation
$$
v_t = 5 v^2 v_x
$$
to a solution of \eqref{a22}. In this long formula we denote the jet
coordinates by $v_1=v_x$, $v_2=v_{xx}$ etc. According to \cite{LWZ, LZ2},
any local Hamiltonian structure of \eqref{a22} with coefficients depending {\it polynomially} on the jet coordinates $u_x$, $u_{xx}$, \dots must be obtained from some dispersionless Hamiltonian structure of the form
\[\{v(x),v(y)\}=\varphi(v(x))\dl'(x-y)+\frac12\varphi'(v(x))v_x(x)\dl(x-y)\]
by applying the quasitriviality transformation \eqref{qmt-2}. The unknown function $\varphi(v)$ has to be chosen in such a way to ensure cancellation of all the jet dependent denominators in the transformed bracket
\begin{equation}
\{u(x),u(y)\}=\varphi(u)\dl'(x-y)+\frac12\varphi'(u)u_x\dl(x-y)+\e^2\,Z_2+\e^4\,Z_4+\e^6\,Z_6+\cdots.\label{a220-ham}
\end{equation}
Here $Z_2$ is a polynomial for any $\varphi(v)$, while $Z_4$ contains the following term
\[-\frac{3}{160}\frac{u_{xx}^4}{u^2\,u_x^4}\left[3\,u^2\,\varphi''(u)-2\,u\,\varphi'(u)+2\varphi(u)\right]\dl'(x-y).\]
So, to ensure $Z_4$ is a polynomial, we must have
\[\varphi(u)=c_1 u+c_2 u^{\frac23}\]
for some constants $c_1$ and $c_2$.
Next, $Z_6$ contains the following term
\[\frac{5\,c_2\,u_{xx}^5}{432\,u^{10/3}\,u_x^4}\dl'(x-y),\]
which implies $c_2=0$.
So we have $\varphi(u)=c_1\,u$, by taking $c_1=2$, we obtain the Hamiltonian structure \eqref{a22-h}. The Proposition is proved.

\medskip

In a similar way we have analyzed another example of an integrable scalar equation associated with $A_2^{(2)}$. It would be interesting to prove in general that the Drinfeld - Sokolov hierarchies associated with twisted Kac - Moody Lie algebras never admit a local bihamiltonian structure.

\vskip 0.4truecm \noindent{\bf Acknowledgments.}
The authors thank Yassir Dinar for fruitful discussions.
The work of Dubrovin is
partially supported by European Science Foundation Programme ``Methods of
Integrable Systems, Geometry, Applied Mathematics" (MISGAM), Marie Curie RTN ``European Network in
Geometry, Mathematical Physics and Applications"  (ENIGMA),
and by Italian Ministry of Universities and Researches (MUR) research grant PRIN 2006
``Geometric methods in the theory of nonlinear waves and their applications".
He also thanks for hospitality and support the Department of Mathematics of Tsinghua University
where part of this work was done.
The work of Zhang is partially supported by NSFC No.10631050 and the
National Basic Research Program of China (973 Program) No.2007CB814800. He also thanks SISSA,
where part of this work was done, for
hospitality and support.

\appendix

\section{Data for $E_7$}

{\scriptsize
The Weyl basis
\begin{align*}
X_1=& e_{7,8}+e_{9,10}+e_{11,12}+e_{13,15}+e_{16,18}+e_{19,22}
-e_{35,38}-e_{39,41}-e_{42, 44}-e_{45, 46}-e_{47, 48}-e_{49, 50},\\
X_2=& e_{5,6}+e_{7,9}+e_{8,10}-e_{20,23}-e_{24,26}-e_{27,29}
-e_{28,30}-e_{31,33}-e_{34,37}+e_{47,49}+e_{48,50}+e_{51,52},\\
X_3=& e_{5,7}+e_{6,9}+e_{12,14}+e_{15,17}+e_{18,21}+e_{22,25}
-e_{32,35}-e_{36,39}-e_{40,42}-e_{43,45}+e_{48,51}+e_{50,52},\\
X_4=& e_{4,5}-e_{9,11}-e_{10,12}-e_{17,20}-e_{21,24}-e_{25,28}
-e_{29,32}-e_{33,36}-e_{37,40}+e_{45,47}+e_{46,48}-e_{52,53},\\
X_5=& e_{3,4}-e_{11,13}-e_{12,15}-e_{14,17}-e_{24,27}-e_{26,29}
-e_{28,31}-e_{30,33}+e_{40,43}+e_{42,45}+e_{44,46}-e_{53,54},\\
X_6=& e_{2,3}-e_{13,16}-e_{15,18}-e_{17,21}-e_{20,24}-e_{23,26}
+e_{31,34}+e_{33,37}+e_{36,40}+e_{39,42}+e_{41,44}-e_{54,55},\\
X_7=& e_{1,2}+e_{16,19}+e_{18,22}+e_{21,25}+e_{24,28}+e_{26,30}
+e_{27,31}+e_{29,33}+e_{32,36}+e_{35,39}+e_{38,41}+e_{55,56},\\
Y_i=&{X_i}^T,\ H_i=[X_i, Y_i],\quad i=1,\dots,7.
\end{align*}

The invariant bilinear form
\[ \la A, B\ra_\g=\frac1{12}\tr(A\,B).\]

The Dynkin diagram
\begin{center}
\setlength{\unitlength}{0.7mm}
\begin{picture}(77,24)
\put(1,6){\circle*{2}} \put(16,6){\circle*{2}} \put(31,6){\circle*{2}} \put(46,6){\circle*{2}}
\put(61,6){\circle*{2}} \put(76,6){\circle*{2}} \put(31,21){\circle*{2}}
\put(1,6){\line(1,0){15}} \put(16,6){\line(1,0){15}} \put(31,6){\line(1,0){15}}
\put(46,6){\line(1,0){15}} \put(61,6){\line(1,0){15}} \put(31,6){\line(0,1){15}}
\put(0,0){$1$} \put(30,23){$2$} \put(15,0){$3$} \put(30,0){$4$}
\put(45,0){$5$} \put(60,0){$6$} \put(75,0){$7$}
\end{picture}
\end{center}

The basis of $V$
\begin{align*}
\gamma_1=&X_7+\frac{52}{27}X_6+\frac{25}{9}X_5+\frac{32}{9}X_4+\frac{22}{9}X_3+\frac{49}{27}X_2+\frac{34}{27}X_1
=\frac1{27}I_+,\\
\gamma_2=&X_{0011111}-\frac{49}{44}X_{0101111}-\frac{49}{54}X_{0111110}+\frac{196}{117}X_{0112100}
+\frac{34}{27}X_{1011110}+\frac{833}{1404}X_{1111100},\\
\gamma_3=&X_{0112111}-\frac{25}{9}X_{0112210}+\frac{17}{24}X_{1111111}-\frac{34}{27}X_{1112110}
-\frac{187}{117}X_{1122100},\\
\gamma_4=&X_{0112221}+\frac{17}{26}X_{1112211}+\frac{187}{325}X_{1122111},
\gamma_5=X_{1122221}-\frac{24}{13}X_{1123211}+\frac{392}{117}X_{1223210},\\
\gamma_6=&X_{1123321}-\frac{49}{75}X_{1223221},\ \gamma_7=X_{2234321}.
\end{align*}

The flat coordinates of the first metric
\begin{align*}
t_1=
&\frac{38484341090380842575u_1^9}{8105110306037952534}-\frac{636900975026373245u_2u_1^6}{58884863779069614}
+\frac{36219697127845u_3u_1^5}{132177023073108}\\
&-\frac{17412855149u_4u_1^4}{16788221190}+\frac{2705810236u_5u_1^3}{5036466357}
+\frac{6468204279499075u_2^2u_1^3}{2566845151777764}\\
&-\frac{4934129232697u_2u_3u_1^2}{2880858756204}-\frac{553112u_6u_1^2}{13286025}-
\frac{2474855971u_3^2u_1}{38799444528}
+\frac{270317u_2u_4u_1}{1108809}\\
&-\frac{624221464415u_2^3}{7944932410272}-\frac{329273u_3u_4}{9240075}-\frac{433415u_2u_5}{16888014}
-\frac{u_7}{81},\\
t_2=
&-\frac{320145055952981u_1^7}{68630377364883}+\frac{2127572498737u_2u_1^4}{460255540932}
-\frac{3585503245u_3u_1^3}{1549681956}+\frac{160921 u_4u_1^2}{328050}\\
&-\frac{19277059375u_2^2u_1}{20062960632}-\frac{1798 u_5u_1}{19683}+\frac{11494397 u_2u_3}{67552056}
+\frac{124 u_6}{2025},\\
t_3= &\frac{333872584054 u_1^6}{847288609443}+\frac{8686821907 u_2u_1^3}{3357644238}
+\frac{24931445 u_3u_1^2}{165809592}
+\frac{7337 u_4u_1}{52650}-\frac{44605625 u_2^2}{495381744}-\frac{725 u_5}{3159},\\
t_4=&\frac{162925u_1^5}{59049}-\frac{149891u_2u_1^2}{69498}+\frac{665u_3u_1}{1404}-\frac{161 u_4}{650},\\
t_5=&\frac{92912470 u_1^4}{43046721}-\frac{382375 u_2u_1}{938223}+\frac{11305 u_3}{12636},\\
t_6=&-\frac{2185 u_2}{1782}+\frac{439622 u_1^3}{531441},\ t_7=-\frac{133u_1}{27}.
\end{align*}

The Euler and the unity vector fields
\begin{align*}
&E=t_1\frac{\p}{\p t_1}+\frac79 t_2 \frac{\p}{\p t_2}+\frac23 t_3 \frac{\p}{\p t_3}+\frac59 t_4 \frac{\p}{\p t_4}+
\frac49 t_5 \frac{\p}{\p t_5}+\frac13 t_6 \frac{\p}{\p t_6}+\frac19 t_7 \frac{\p}{\p t_7},\\
&e=-\frac1{81} \frac{\p}{\p t_1}.
\end{align*}

The potential of the Frobenius manifold is given by $F(t)=\frac{3^7}{2} \tilde{F}$, where $\tilde{F}$ is
obtained from the $E_7$ free energy of \cite{zuber}(given in Appendix D) by the following substitution:
\beq
t_{0}\mapsto t_1,\ t_{4}\mapsto t_2,\ t_{6}\mapsto t_3,\ t_{8}\mapsto t_4,\ t_{10}\mapsto t_5,\ t_{12}\mapsto t_6,\
t_{16}\mapsto t_7.\nn
\eeq

The components of the tensor $K$:
\begin{align*}
&K_2^1 = \frac{1265 t_7}{5301},\ K_3^1 = -\frac{572296 t_7^2}{2355525},\
K_4^1 = \frac{158794 t_6}{211071} - \frac{3974393683 t_7^3}{46960314975},\\
&K_5^1 = -\frac{44538734101583 t_7^4}{4034736342022050}
- \frac{23803046371 t_6 t_7}{45336995445} - \frac{1049501 t_5}{9665775},\\
&K_6^1 = -\frac{135026437662337483 t_7^5}{22921337159027266050}
+ \frac{4216764458944 t_6 t_7^2}{20156828174847}
- \frac{3478865269 t_5 t_7}{6476713635} + \frac{974770 t_4}{1175967},\\
&K_7^1 = \frac{11916123469344407708 t_7^7}{33997824196158072507075}
- \frac{833538047791538069 t_6 t_7^4}{22921337159027266050}
- \frac{12187906171012 t_5 t_7^3}{288195453001575} \\
&- \frac{3127213133 t_4 t_7^2}{9690223725} + \frac{18075424576 t_6^2 t_7}{73834535439}
- \frac{2899844 t_3 t_7}{4616829} + \frac{1883 t_2}{5301}
- \frac{44290188771 t_5 t_6}{75561659075},\\
&K_3^2 = \frac{31}{75},\
K_4^2 = \frac{4029628 t_7}{4817925},\
K_5^2 = -\frac{19829032882 t_7^2}{68991080025},\
K_4^3 = -\frac{29}{42},\
K_5^4 = \frac{27}{34},\\
&K_6^2 = \frac{33964731481462 t_7^3}{460101512686725}
- \frac{33203728 t_6}{22343373},\
K_5^3 = -\frac{411452 t_7}{2706417},\
K_6^3 = -\frac{213705872 t_7^2}{668484999},\\
&K_7^2 = -\frac{27908652530225533 t_7^5}{3488029567678062225}
+ \frac{42670790888702 t_6 t_7^2}{153367170895575}
- \frac{387454306 t_5 t_7}{579756975} + \frac{236468 t_4}{229425},\\
&K_7^3 = -\frac{6028481916881 t_7^4}{456100629967710} - \frac{184848088 t_6 \
t_7}{257109615} - \frac{21373 t_5}{159201},\
K_6^4 = -\frac{4518 t_7}{4199},\\
&K_7^4 = \frac{7249042 t_7^3}{53054365} - \frac{1926 t_6}{1615},\
K_6^5 = \frac{25}{39},\
K_7^5 = -\frac{939370 t_7^2}{2069613},\
K_7^6 = \frac{4922 t_7}{8379}.
\end{align*}
}

\section{Data for $E_8$}
{\scriptsize
The Weyl basis
\begin{align*}
X_1=&e_{8,9}+e_{10,11}+e_{12,13}+e_{14,16}+e_{17,19}+e_{20,23}+e_{24,28}-e_{42,47}-e_{48,53}-e_{54,59}-e_{56,61}\\
&-e_{60,65}-e_{62,67}-e_{66,71}-e_{68,73}-e_{69,75}-e_{72,78}-e_{74,80}-e_{76,82}-e_{81,86}-e_{83,88}-e_{84,90}\\
&-e_{89,94}-e_{91,96}-e_{97,102}-e_{98,104}-e_{105,110}-e_{112,118}-e_{120,127}+e_{135,143}+e_{142,149}\\
&+e_{148,155}+e_{150,157}+e_{154,161}+e_{156,163}+e_{160,167}+e_{162,169}+e_{164,171}+e_{166,172}+e_{168,174}\\
&+e_{170,176}+e_{173,179}+e_{175,181}+e_{177,184}+e_{180,185}+e_{182,188}+e_{186,192}+e_{189,195}+e_{193,198}\\
&+e_{199,205}-e_{224,228}-e_{225,229}-e_{230,232}-e_{233,235}-e_{236,237}-e_{238,239}-e_{240,241}+2e_{127,136}\\
&+\frac12(e_{122,136}-e_{121,136}-e_{123,136}+e_{124,136}-e_{125,136}-3e_{126,136}-e_{128,136}),
\end{align*}
\begin{align*}
X_2=&e_{6,7}+e_{8,10}+e_{9,11}-e_{21,25}-e_{26,29}-e_{30,33}-e_{31,34}-e_{35,38}-e_{36,40}-e_{39,44}-e_{41,45}\\
&-e_{46,51}-e_{52,58}+e_{66,72}+e_{71,78}+e_{74,81}+e_{79,85}+e_{80,86}+e_{83,89}+e_{87,93}+e_{88,94}+e_{91,97}\\
&+e_{95,101}+e_{96,102}+e_{98,105}+e_{103,109}+e_{104,110}+e_{111,117}+e_{119,125}+e_{119,126}+e_{119,127}\\
&+e_{125,134}+e_{126,134}+e_{133,141}+e_{140,147}+e_{142,150}+e_{146,153}+e_{148,156}+e_{149,157}+e_{152,159}\\
&+e_{154,162}+e_{155,163}+e_{158,165}+e_{160,168}+e_{161,169}+e_{166,173}+e_{167,174}+e_{172,179}-e_{196,202}\\
&-e_{200,206}-e_{203,208}-e_{207,211}-e_{209,213}-e_{212,216}-e_{214,217}-e_{215,218}-e_{219,222}-e_{223,227}\\
&+e_{238,240}+e_{239,241}+e_{242,243},
\end{align*}
\begin{align*}
X_3=&e_{6,8}+e_{7,10}+e_{13,15}+e_{16,18}+e_{19,22}+e_{23,27}+e_{28,32}-e_{37,42}-e_{43,48}-e_{49,54}-e_{50,56}\\
&-e_{55,60}-e_{57,62}-e_{63,68}-e_{64,69}-e_{70,76}-e_{71,79}-e_{77,84}-e_{78,85}-e_{80,87}-e_{86,93}-e_{88,95}\\
&-e_{94,101}-e_{96,103}-e_{102,109}-e_{104,111}-e_{110,117}+e_{112,120}-e_{118,125}+e_{118,126}+e_{118,127}\\
&+e_{125,135}-e_{127,135}+e_{133,142}-e_{136,143}+e_{140,148}+e_{141,150}+e_{146,154}+e_{147,156}+e_{152,160}\\
&+e_{153,162}+e_{158,166}+e_{159,168}+e_{165,173}+e_{171,178}+e_{176,183}+e_{181,187}+e_{184,190}+e_{185,191}\\
&+e_{188,194}+e_{192,197}+e_{195,201}+e_{198,204}+e_{205,210}-e_{220,224}-e_{221,225}-e_{226,230}-e_{231,233}\\
&-e_{234,236}-e_{239,242}-e_{241,243},
\end{align*}
\begin{align*}
X_4=&e_{5,6}-e_{10,12}-e_{11,13}-e_{18,21}-e_{22,26}-e_{27,31}-e_{32,36}-e_{33,37}-e_{38,43}-e_{44,49}-e_{45,50}\\
&-e_{51,57}-e_{58,64}-e_{60,66}-e_{65,71}-e_{68,74}-e_{73,80}-e_{76,83}-e_{82,88}-e_{84,91}+e_{85,92}-e_{90,96}\\
&+e_{93,100}+e_{101,108}+e_{105,112}+e_{109,116}+e_{110,118}+e_{111,119}+e_{117,124}+e_{117,125}+e_{124,133}\\
&+e_{125,133}+e_{132,140}+e_{134,141}+e_{135,142}+e_{139,146}+e_{143,149}+e_{145,152}+e_{151,158}-e_{156,164}\\
&-e_{162,170}-e_{163,171}-e_{168,175}-e_{169,176}-e_{173,180}-e_{174,181}-e_{179,185}-e_{190,196}-e_{194,200}\\
&-e_{197,203}-e_{201,207}-e_{204,209}-e_{210,215}-e_{216,220}-e_{217,221}-e_{222,226}-e_{227,231}-e_{236,238}\\
&-e_{237,239}+e_{243,244},
\end{align*}
\begin{align*}
X_5=&e_{4,5}-e_{12,14}-e_{13,16}-e_{15,18}-e_{26,30}-e_{29,33}-e_{31,35}-e_{34,38}-e_{36,41}-e_{40,45}-e_{49,55}\\
&-e_{54,60}-e_{57,63}-e_{59,65}-e_{62,68}-e_{64,70}-e_{67,73}-e_{69,76}-e_{75,82}+e_{91,98}+e_{92,99}+e_{96,104}\\
&+e_{97,105}+e_{100,107}+e_{102,110}+e_{103,111}+e_{108,115}+e_{109,117}+e_{116,123}+e_{116,124}+e_{123,132}\\
&+e_{124,132}+e_{131,139}+e_{133,140}+e_{138,145}+e_{141,147}+e_{142,148}+e_{144,151}+e_{149,155}+e_{150,156}\\
&+e_{157,163}-e_{170,177}-e_{175,182}-e_{176,184}-e_{180,186}-e_{181,188}-e_{183,190}-e_{185,192}-e_{187,194}\\
&-e_{191,197}-e_{207,212}-e_{209,214}-e_{211,216}-e_{213,217}-e_{215,219}-e_{218,222}-e_{231,234}-e_{233,236}\\
&-e_{235,237}+e_{244,245},
\end{align*}
\begin{align*}
X_6=&e_{3,4}-e_{14,17}-e_{16,19}-e_{18,22}-e_{21,26}-e_{25,29}-e_{35,39}-e_{38,44}-e_{41,46}-e_{43,49}-e_{45,51}\\
&-e_{48,54}-e_{50,57}-e_{53,59}-e_{56,62}-e_{61,67}+e_{70,77}+e_{76,84}+e_{82,90}+e_{83,91}+e_{88,96}+e_{89,97}\\
&+e_{94,102}+e_{95,103}+e_{99,106}+e_{101,109}+e_{107,114}+e_{108,116}+e_{115,122}+e_{115,123}+e_{122,131}\\
&+e_{123,131}+e_{130,138}+e_{132,139}+e_{137,144}+e_{140,146}+e_{147,153}+e_{148,154}+e_{155,161}+e_{156,162}\\
&+e_{163,169}+e_{164,170}+e_{171,176}+e_{178,183}-e_{182,189}-e_{186,193}-e_{188,195}-e_{192,198}-e_{194,201}\\
&-e_{197,204}-e_{200,207}-e_{203,209}-e_{206,211}-e_{208,213}-e_{219,223}-e_{222,227}-e_{226,231}-e_{230,233}\\
&-e_{232,235}+e_{245,246},
\end{align*}
\begin{align*}
X_7=&e_{2,3}-e_{17,20}-e_{19,23}-e_{22,27}-e_{26,31}-e_{29,34}-e_{30,35}-e_{33,38}-e_{37,43}-e_{42,48}+e_{46,52}\\
&-e_{47,53}+e_{51,58}+e_{57,64}+e_{62,69}+e_{63,70}+e_{67,75}+e_{68,76}+e_{73,82}+e_{74,83}+e_{80,88}+e_{81,89}\\
&+e_{86,94}+e_{87,95}+e_{93,101}+e_{100,108}+e_{106,113}+e_{107,115}+e_{114,121}+e_{114,122}+e_{121,130}\\
&+e_{122,130}+e_{129,137}+e_{131,138}+e_{139,145}+e_{146,152}+e_{153,159}+e_{154,160}+e_{161,167}+e_{162,168}\\
&+e_{169,174}+e_{170,175}+e_{176,181}+e_{177,182}+e_{183,187}+e_{184,188}+e_{190,194}-e_{193,199}+e_{196,200}\\
&-e_{198,205}+e_{202,206}-e_{204,210}-e_{209,215}-e_{213,218}-e_{214,219}-e_{217,222}-e_{221,226}-e_{225,230}\\
&-e_{229,232}+e_{246,247},
\end{align*}
\begin{align*}
X_8=&e_{1,2}+e_{20,24}+e_{23,28}+e_{27,32}+e_{31,36}+e_{34,40}+e_{35,41}+e_{38,45}+e_{39,46}+e_{43,50}+e_{44,51}\\
&+e_{48,56}+e_{49,57}+e_{53,61}+e_{54,62}+e_{55,63}+e_{59,67}+e_{60,68}+e_{65,73}+e_{66,74}+e_{71,80}+e_{72,81}\\
&+e_{78,86}+e_{79,87}+e_{85,93}+e_{92,100}+e_{99,107}+e_{106,114}+2e_{113,121}-e_{113,122}+e_{113,123}\\
&-e_{113,124}+e_{113,125}-e_{113,126}+e_{113,127}+e_{113,128}+e_{121,129}+e_{130,137}+e_{138,144}+e_{145,151}\\
&+e_{152,158}+e_{159,165}+e_{160,166}+e_{167,172}+e_{168,173}+e_{174,179}+e_{175,180}+e_{181,185}+e_{182,186}\\
&+e_{187,191}+e_{188,192}+e_{189,193}+e_{194,197}+e_{195,198}+e_{200,203}+e_{201,204}+e_{206,208}+e_{207,209}\\
&+e_{211,213}+e_{212,214}+e_{216,217}+e_{220,221}+e_{224,225}+e_{228,229}+e_{247,248}.
\end{align*}
\begin{align*}
Y_1=&{X_1}^T-e_{127,120}-e_{136,127}+\frac12(e_{121,120}-e_{122,120}+e_{123,120}-e_{124,120}+e_{125,120}+3e_{126,120}\\
&e_{128,120}+e_{136,121}-e_{136,122}+e_{136,123}-e_{136,124}+e_{136,125}+3e_{136,126}+e_{136,128}),\\
Y_2=&{X_2}^T+e_{134,127}-e_{127,119},\ Y_3={X_3}^T-e_{126,118}-e_{135,126},\ Y_i={X_i}^T\,(i=4,5,6,7),\\
Y_8=&{X_8}^T-e_{121,113}+e_{122,113}-e_{123,113}+e_{124,113}-e_{125,113}+e_{126,113}-e_{127,113}-e_{128,113}\\
&+e_{129,121}-e_{129,122}+e_{129,123}-e_{129,124}+e_{129,125}-e_{129,126}+e_{129,127}+e_{129,128},\\
H_i=&[X_i, Y_i],\quad i=1,\dots,8.
\end{align*}

The invariant bilinear form
\[\la A,B\ra_\g=\frac1{60}\tr(A\,B).\]

The Dynkin diagram
\begin{center}
\setlength{\unitlength}{0.7mm}
\begin{picture}(92,24)
\put(1,6){\circle*{2}} \put(16,6){\circle*{2}} \put(31,6){\circle*{2}} \put(46,6){\circle*{2}}
\put(61,6){\circle*{2}} \put(76,6){\circle*{2}} \put(91,6){\circle*{2}} \put(31,21){\circle*{2}}
\put(1,6){\line(1,0){15}} \put(16,6){\line(1,0){15}} \put(31,6){\line(1,0){15}}
\put(46,6){\line(1,0){15}} \put(61,6){\line(1,0){15}} \put(76,6){\line(1,0){15}} \put(31,6){\line(0,1){15}}
\put(0,0){$1$} \put(30,23){$2$} \put(15,0){$3$} \put(30,0){$4$}
\put(45,0){$5$} \put(60,0){$6$} \put(75,0){$7$} \put(90,0){$8$}
\end{picture}
\end{center}

The basis of $V$
\begin{align*}
\gamma_1=&X_8+\frac{57}{29}X_7+\frac{84}{29}X_6+\frac{110}{29}X_5+\frac{135}{29}X_4+\frac{91}{29}X_3+\frac{68}{29}X_2
+\frac{46}{29}X_1=\frac1{58}I_+.\\
\gamma_2=&X_{01111111}-\frac{135}{29}X_{01121110}+\frac{4950}{551}X_{01122100}-\frac{69}{34}X_{10111111}
-\frac{92}{29}X_{11111110}\\&+\frac{2070}{551}X_{11121100}+\frac{4485}{1102}X_{11221000},\\
\gamma_3=&X_{01122221}+\frac{46}{57}X_{11122211}+\frac{299}{684}X_{11222111}+\frac{2093}{1653}X_{11222210}
-\frac{4485}{2204}X_{11232110}\\&+\frac{25415}{10469}X_{12232100},\\
\gamma_4=&X_{11222221}-\frac{45}{19}X_{11232211}+\frac{4950}{551}X_{11233210}+\frac{510}{133}X_{12232111}
-\frac{3060}{551}X_{12232210},\\
\gamma_5=&X_{12233321}-\frac{45}{28}X_{12243221}+\frac{195}{76}X_{12343211}-\frac{4485}{1102}X_{22343210},\\
\gamma_6=&X_{12244321}-\frac{91}{110}X_{12343321}+\frac{299}{660}X_{22343221},\\
\gamma_7=&X_{22454321}+\frac{68}{91}X_{23354321},\ \gamma_8=X_{23465432}.
\end{align*}

The flat coordinates of the first metric
\begin{align*}
t_1=&\frac{97744996564015238279820370313216u_1^{15}}{1213479667630994687008453125}
-\frac{1288270064507206171704653824u_2u_1^{11}}{12314889544759321146875}\\
&-\frac{2393328775550101435187456u_3u_1^9}{736464332431224703125}
-\frac{2420766825182080240896u_4u_1^8}{1039574576934578125}\\
&+\frac{862526917251409198851419904u_2^2u_1^7}{23648688008069326578125}
-\frac{131350976991158976u_5u_1^6}{1236117213953125}\\
&+\frac{1245108893989927485504u_2u_3u_1^5}{1083721620290046875}-\frac{963126559584u_6u_1^5}{121563921875}\\
&+\frac{14299027732501070976u_2u_4u_1^4}{13767788279546875}
-\frac{33722086688072420883336u_2^3u_1^3}{16483976224411765625}\\
&+\frac{18146063436077468u_3^2u_1^3}{1440211987515625}-\frac{461307312u_7u_1^3}{1195796875}
+\frac{6018680488536u_3u_4u_1^2}{630920828125}\\
&+\frac{357801015222144u_2u_5u_1^2}{16370735171875}+\frac{32227564652712u_4^2u_1}{6740890953125}
-\frac{1845004006277946u_2^2u_3u_1}{44434852609375}\\
&+\frac{130653081u_2u_6u_1}{169468750}-\frac{178420747189383u_2^2u_4}{14647499890625}
+\frac{561100797u_3u_5}{3480942500}+\frac{243u_8}{15625},
\end{align*}
\begin{align*}
t_2=&\frac{3840041279133973715980288u_1^{12}}{663402718510254451875}
-\frac{744229812509520647424u_2u_1^8}{100987244616501875}\\
&-\frac{10589318985400591616u_3u_1^6}{58380050133271875}-\frac{53628582258067968u_4u_1^5}{247223442790625}
+\frac{1500712331u_3^2}{9448272500}\\
&+\frac{280559110180100677056u_2^2u_1^4}{193929132051903125}-\frac{2377142258796u_5u_1^3}{293963665625}
-\frac{96129730861884u_2^3}{7114499946875}\\
&-\frac{998361u_6u_1^2}{2628125}+\frac{4615114574016u_2u_4u_1}{172323528125}
+\frac{383963689278654u_2u_3u_1^2}{8886970521875}-\frac{12879u_7}{284375},
\end{align*}
\begin{align*}
t_3=&\frac{2549753401933430769664u_1^{10}}{2366478187313630625}-\frac{26925773335669824u_2u_1^6}{36655987156625}
+\frac{266785048403496u_2^2u_1^2}{2427299981875}\\
&-\frac{7567792181264u_3u_1^4}{243569894375}-\frac{8241329088u_4u_1^3}{442050625}
-\frac{364203u_5u_1}{525625}+\frac{2292843u_2u_3}{1779730}-\frac{2709u_6}{27500},\\
t_4=&\frac{1522342602122771968u_1^9}{5440179740950875}-\frac{537827687557056u_2u_1^5}{1412705387375}
-\frac{1371394734296u_3u_1^3}{159580275625}\\&-\frac{19495261224u_4u_1^2}{2027335625}
+\frac{2320060511106u_2^2u_1}{83699999375}-\frac{1925073u_5}{4821250},\\
t_5=&\frac{98329903451648u_1^7}{1293740723175}-\frac{20835557808u_2u_1^3}{335958475}
-\frac{1420978u_3u_1}{1308625}-\frac{887778u_4}{482125},\ t_8=\frac{124u_1}{29},\\
t_6=&\frac{567020228224u_1^6}{14870583025}-\frac{2551308804u_2u_1^2}{196941175}-\frac{1271u_3}{1805},\
t_7=\frac{22811536u_1^4}{3536405}-\frac{10323u_2}{2465}.
\end{align*}

The Euler and the unity vector fields
\begin{align*}
&E=t_1\frac{\p}{\p t_1}+\frac45t_2\frac{\p}{\p t_2}+\frac23t_3\frac{\p}{\p t_3}+\frac35t_4\frac{\p}{\p t_4}
+\frac7{15}t_5\frac{\p}{\p t_5}+\frac25t_6\frac{\p}{\p t_6}+\frac4{15}t_7\frac{\p}{\p t_7}
+\frac1{15}t_8\frac{\p}{\p t_8},\\
&e=\frac{3^5}{5^6}\frac{\p}{\p t_1}.
\end{align*}

The potential of the Frobenius manifold is given by $F(t)=-\frac{5^{12}}{2\cdot 3^{11}}\tilde{F}$,
where $\tilde{F}$ is
obtained from the $E_8$ free energy of \cite{zuber}(given in Appendix D) by the following substitution:
\beq
t_{0}\mapsto t_1,\ t_{6}\mapsto t_2,\ t_{10}\mapsto t_3,\ t_{12}\mapsto t_4,\ t_{16}\mapsto t_5,\ t_{18}\mapsto t_6,\
t_{22}\mapsto t_7,\ t_{28}\mapsto t_8.\nn
\eeq

The components of the tensor $K$:
\begin{align*}
K_2^1=&\frac{1606747t_8^2}{1018660},\ K_3^1=\frac{601725604717t_8^4}{555959642000}+\frac{18606712t_7}{8631175},\\
K_4^1=&\frac{23515325188849703t_8^5}{51379248583848000}+\frac{72379436165441t_7t_8}{10901267734900},\\
K_5^1=&\frac{68957356685371184639t_8^7}{4819885123299777312000}+\frac{398354166156373021t_7t_8^3}{82917361810316400}\\
&+\frac{1042486747513t_6t_8}{287607914708}+\frac{33089674283t_5}{9730244164},
\end{align*}

\begin{align*}
K_6^1=&\frac{89894219185919589074208393t_8^8}{740233940665443716429200000}
+\frac{540313685410365273433t_7t_8^4}{372713541337372218000}\\
&+\frac{431357837132874729t_6t_8^2}{61633372838501000}+\frac{56454979942439t_5t_8}{14380395735400}
+\frac{33101974334668t_7^2}{8987747334625},
\end{align*}
\begin{align*}
K_7^1=&\frac{1357568393016120771637825259t_8^{10}}{86751806948718464815666000000}
+\frac{541717459607162768848213339t_7t_8^6}{408322593076744759707720000}\\
&+\frac{63779850971187562357459t_6t_8^4}{35407786427050360710000}
+\frac{85588038590694599881t_5t_8^3}{15754298743960116000}+\frac{4464353t_3}{1726235}\\
&+\frac{6210460349164259675293t_7^2t_8^2}{728636316908155365000}+\frac{2173589973652t_4t_8}{279897414815}
+\frac{302135911297903t_6t_7}{40984127845890},
\end{align*}
\begin{align*}
K_8^1=&\frac{2828405696582354177109987369765353t_8^{13}}{2192552418323772900665150972160000000}
+\frac{104659508314126104547782052831201t_7t_8^9}{540637260904413472731230512000000}\\
&+\frac{102899753677493534416025877059t_6t_8^7}{84386669235860583672928800000}
+\frac{8481094570209522684734123t_5t_8^6}{18773452555252632630240000}\\
&+\frac{6651723858574138184203661671t_7^2t_8^5}{1361075310255815865692400000}
+\frac{1289206380441371567t_4t_8^4}{360580492313672000}+\frac{670532779771t_3t_8^3}{111191928400}\\
&+\frac{155409080204960966846669t_6t_7t_8^3}{17703893213525180355000}
+\frac{48735507458090179297t_5t_7t_8^2}{2625716457326686000}+\frac{47752579078487t_4t_7}{5597948296300}\\
&+\frac{2427437707690958t_6^2t_8}{308166864192505}+\frac{1029963t_2t_8}{203732}
+\frac{3178482572407t_5t_6}{719019786770}+\frac{253103049412822813259t_7^3t_8}{38349279837271335000},
\end{align*}

\begin{align*}
K_3^2=&\frac{6307t_8}{2666},\ K_4^2=\frac{4500609957t_8^2}{1253739820},\
K_5^2=\frac{382063294638363t_8^4}{298801303820960}+\frac{1389601541t_7}{282855935},\\
K_6^2=&\frac{249245177327411089t_8^5}{818970646753180000}+\frac{281315094790593t_7t_8}{35950989338500},\\
K_7^2=&\frac{1688680021615354182905594447t_8^7}{8166451861534895194154400000}
+\frac{367873390741219178203t_7t_8^3}{59078620289850435000}\\&
+\frac{227080200740691t_6t_8}{27322751897260}+\frac{6011337539t_5}{1131423740},\\
K_8^2=&\frac{513287066338916479781267251t_8^{10}}{24864125687376253900656000000}
+\frac{195277479174731922608232983t_7t_8^6}{107453313967564410449400000}\\
&+\frac{766053117047317547t_6t_8^4}{296734015730570800}+\frac{2118272066471217t_5t_8^3}{298801303820960}
+\frac{11494829512942367213t_7^2t_8^2}{1036467022628955000}\\
&+\frac{81503559t_4t_8}{8471215}+\frac{21253t_3}{6665}+\frac{123284247247959t_6t_7}{13661375948630},
\end{align*}

\begin{align*}
K_4^3=&\frac{98}{37},\ K_5^3=\frac{17937969t_8^2}{8818136},\ K_6^3=\frac{5442056846681t_8^3}{1981876066000},\\
K_7^3=&\frac{137374164909966859t_8^5}{361870750890940000}+\frac{683860335753t_7t_8}{77489370100},\\
K_8^3=&\frac{7182452159729390399233t_8^8}{40426843274219975775000}+
\frac{1428397083604124417t_7t_8^4}{542806126336410000}+\frac{388911287667t_6t_8^2}{39637521320}\\
&+\frac{247282497t_5t_8}{44090680}+\frac{58064641684t_7^2}{11623405515},
\end{align*}

\begin{align*}
K_5^4=&\frac{16497t_8}{3844},\ K_6^4=\frac{1908504231t_8^2}{863939000},\
K_7^4=\frac{1510115085289879t_8^4}{946479732060000}+\frac{305059657t_7}{50668725},\\
K_8^4=&\frac{75610072721381020687t_8^7}{1127863107911978400000}+\frac{3665966665482509t_7t_8^3}{473239866030000}
+\frac{99038823t_6t_8}{17278780}+\frac{96867t_5}{19220},\\
K_6^5=&\frac{86}{25},\ K_7^5=\frac{48222393t_8^2}{9129500},\ K_8^5=\frac{19962579297353t_8^5}{21758154760000}
+\frac{108825561t_7t_8}{9129500},\\
K_7^6=&\frac{4633t_8}{1178},\ K_8^6=\frac{613126423t_8^4}{280750384}+\frac{12628t_7}{2945},\
K_8^7=\frac{85803t_8^2}{19220},
\end{align*}
}

\noindent{Email addresses:\newline
dubrovin@sissa.it,
liusq@mail.tsinghua.edu.cn,
youjin@mail.tsinghua.edu.cn
\end{document}